\newcommand{\inc}{\mrm{in}}
\DeclareTextSymbol{\degre}{T1}{6}
\newcommand{\Ls}{\Lambda_{\text{sym}}}
\newcommand{\Lss}{\Lambda_{\text{skew}}}
\newcommand{\ke}{\kappa_{\eps}} 
\newcommand{\bfc}{\mathbf{c}}
\newcommand{\e}[1]
{\eps_#1}
\newcommand{\diff}[2]
{
\displaystyle{\frac{\partial #1}{\partial #2}}
}                                                              
\newcommand{\bfk}{\mathbf{k}}
\newcommand{\me}{\text{m}} 
\newcommand{\di}{\text{d}} 
\newcommand{\eC}{\mathbb{C}}
\newcommand{\eR}{\mathbb{R}}
\newcommand{\curl}{{\text{curl }}} 
\newcommand{\tT}{\mathtt{T}}
\newcommand{\Frac}{\displaystyle\frac}
\newcommand{\inv}{^{-1}}
\newcommand{\grad}{\nabla}
\newcommand{\dsp}{\displaystyle}
\newcommand{\eps}{\varepsilon}
\newcommand{\om}{\omega}
\newcommand{\Om}{\Omega}
\newcommand{\mrm}[1]{\mathrm{#1}}
\newcommand{\bfx}{\boldsymbol{x}}
\newcommand{\Cplx}{\mathbb{C}}
\newcommand{\R}{\mathbb{R}}
\renewcommand{\div}{\mrm{div}}
\newcommand{\out}{\mrm{out}}
\newcommand{\ext}{\mrm{ext}}
\newtheorem{lemma}{Lemma}[section]
\newtheorem{remark}{Remark}[section]
\newtheorem{proposition}{Proposition}[section]
\newtheorem{lemmaAnnexe}{Lemma}
\begin{document}

~\vspace{0.0cm}
\begin{center}
{\sc \bf\LARGE On the use of Perfectly Matched Layers at corners\\[7pt] for scattering problems with sign-changing coefficients}
\end{center}

\begin{center}
\textsc{Anne-Sophie Bonnet-Ben Dhia}$^1$, \textsc{Camille Carvalho}$^{1}$, \textsc{Lucas Chesnel}$^2$, \textsc{Patrick Ciarlet}$^1$\\[16pt]
\begin{minipage}{0.9\textwidth}
{\small
$^1$ Laboratoire Poems, UMR 7231 CNRS/ENSTA/INRIA, Ensta ParisTech, 828, Boulevard des Maréchaux, 91762 Palaiseau, France; \\
$^2$ INRIA/Centre de mathématiques appliquées, \'Ecole Polytechnique, Université Paris-Saclay, Route de Saclay, 91128 Palaiseau, France.\\[10pt] 
E-mails: \texttt{Bonnet-Bendhia@ensta-paristech.fr}, \texttt{Carvalho@ensta-paristech.fr}, \texttt{Lucas.Chesnel@inria.fr}, \texttt{Ciarlet@ensta-paristech.fr}\\[-14pt]
\begin{center}
(\today)
\end{center}
}
\end{minipage}
\end{center}
\vspace{0.4cm}

\noindent\textbf{Abstract.} 
We investigate in a $2$D setting the scattering of time-harmonic electromagnetic waves by a plasmonic device, represented as a non dissipative bounded and penetrable obstacle with a negative permittivity. Using the $\tT $-coercivity approach, we first prove that the problem is well-posed in the classical framework $H^1_{\text{loc}} $ if the negative permittivity does not lie in some critical interval whose definition depends on the shape of the device. When the latter has corners, for values inside the critical interval, unusual strong singularities for the electromagnetic field can appear. In that case, well-posedness is obtained by imposing a radiation condition at the corners to select the outgoing black-hole plasmonic wave, that is the one which carries energy towards the corners. A simple and systematic criterion is given to define what is the outgoing solution. Finally, we propose an original numerical method based on the use of Perfectly Matched Layers at the corners. We emphasize that it is necessary to design an \textit{ad hoc} technique because the field is too singular to be captured with standard finite element methods.\\

\noindent\textbf{Key words.} Scattering problem,  sign-changing permittivity, corner singularities, black-hole waves, Perfectly Matched Layers, finite element method.

\section{Introduction}

We are interested in the scattering of Transverse Magnetic (TM) time-harmonic electromagnetic waves by a metallic obstacle embedded in some dielectric medium, governed by the following scalar equation
\begin{equation}\label{eq:intro}
\div(\eps\inv \grad u ) + \omega^2 \mu u = 0,
\end{equation}
where $\omega $ is the frequency, $\eps $ is the dielectric permittivity and $\mu $ is the magnetic permeability. Unlike common materials, metals can exhibit a permittivity with a negative real part. More precisely, following the Drude's law (see e.g. \cite{Anan05}) the permittivity depends on the frequency:
\begin{equation}\label{lossyDrudeModel}
\eps (\om) = \eps_0\eps_r (\om) = \eps_0 \left( 1 - \Frac{\om^2_p}{\om^2 + i \om \gamma}\right).
\end{equation}
Here, $\eps_{0}>0 $ is the vacuum permittivity, $\eps_r (\om)$ is called the relative permittivity, $\gamma >0 $ characterizes the dissipative effects (we choose the convention of a harmonic regime in $e^{-i\om t}$), and $\om_p >0$ is the plasma frequency. Dissipation  becomes neglectable at frequencies $\om  $ such that $ \om \gg \gamma $, leading to the so-called dissipationless Drude's model: 
\begin{equation}\label{eq:drude}
\eps_r (\om) =  1 - \Frac{\om^2_p}{\om^2} .
\end{equation}
Then for $\omega < \omega_p $, $ \eps(\omega) = \eps_0 \eps_r(\om) $ takes negative real values. Note that, since $\gamma \ll \om_p $ for metals, the regime $\gamma \ll \om < \om_p$ is meaningful. Due to the change of sign of the dielectric constant at the interface metal-dielectric, resonances called surface plasmons can appear  \cite{Maier07,Raet80}. Over the past decades, surface plasmons revealed a great interest in guiding or confining light in nano-photonic devices \cite{BaDeEb03,GrBo10,ZaSM05}.

From a mathematical point of view, the study of Equation \eqref{eq:intro} with a function $\eps$ changing sign on the physical domain, has given rise to many contributions. In particular, an abstract mathematical approach named $\tT $-coercivity and based on variational methods has been proposed in \cite{BoCZ10,BoChCi12}. With this technique, it has been proved that Problem \eqref{eq:intro} set in a bounded domain supplemented with Dirichlet (or Neumann) boundary condition is of Fredholm type in the classical functional framework whenever the contrast (ratio of the values of $\eps$ across the interface) lies outside some interval $I_c $, called critical interval, which always contains the value $-1$. Moreover, this interval reduces to $\{-1\}$ if and if only the interface between the two materials is smooth (of class $\mathscr{C}^2$). Analogous results have been obtained by techniques of boundary integral equations in the 1980s in \cite{CoSt85} (see also \cite{Ola95,LeNg13}). Note that the critical value $-1 $ is associated through Equation \eqref{eq:drude} to the so-called surface plasmon frequency while the critical interval is associated to a critical range of frequencies. The numerical approximation of the solution of this scalar problem for a contrast outside the critical interval, based on classical finite element methods, has been investigated in \cite{BoCZ10,NiVe11,ChCi13}. Under some assumptions on the meshes, the discretized problem is well-posed and its solution converges to the solution of the continuous problem. Let us mention that the study of Maxwell's equations has been carried out in \cite{BoCC14,BoChCi12_Maxwell,BrGS15}.\\

For geometries with wedges and sharp corners, the solution exhibits strong singularities at these regions when the constrast is getting closer to the critical interval. This leads to a local energy enhancement of the light \cite{Stoc04,BVNMRB08,CMLZ09}. Even more, for a contrast inside the critical interval, the problem becomes ill-posed in the classical framework because the solution no longer belongs to $H^1 $. Up to now, mathematical analysis has addressed in $2$D the simplified electrostatic like equation
\begin{equation}\label{EquationChangeOfSign} 
\div\left( \eps^{-1}(\om) \nabla u \right) = 0,
\end{equation}
which can be seen as an approximation of \eqref{eq:intro} by zooming at a corner. It is interesting to note that the critical interval also appears in the studies of \eqref{EquationChangeOfSign} with techniques of conformal mappings \cite{ALFSMP10,Pendry10,FWGVMP12}. Concerning the mathematical framework, the influence of corners at the interface between the two materials has been clarified in \cite{BoChCl13} for equation \eqref{EquationChangeOfSign} set in a particular geometry (with one corner of particular aperture). In that case, when the contrast lies inside the critical interval, Fredholm property is lost because of the existence of two strongly oscillating singularities at the corner, responsible for the ill-posedness in the classical framework. These singularities can be interpreted as waves propagating towards or outwards the corner. Then selecting the outgoing singularity by means of a limiting absorption principle allows to recover Fredholmness of the problem.\\

The first goal of the present paper is to extend the theory to a more realistic scattering problem in free space, for a contrast of permittivities outside or inside $I_c$. The second objective is to present an original numerical method to approximate the solution when the contrast lies inside the critical interval. The approach, based on a finite element method, consists in using well-suited Perfectly Matched Layers (PMLs) at the corners to capture the strongly oscillating singularities. \\

It is important to emphasize that considering a problem without absorption, that is using model (\ref{eq:drude}) instead of model (\ref{lossyDrudeModel}) for $\eps_r(\om)$, is not only of mathematical interest. Indeed, we can show a limiting absorption principle (see in particular the discussion of \S\ref{subsectionOutgoing}): a solution of the scattering problem with small absorption behaves like a solution of the problem without absorption. Therefore, it is relevant to study the limit problem with a real valued $\eps_r(\om)$ like in (\ref{eq:drude}). For a numerical illustration of its relevance, see the beginning of Section \ref{sectionDiscussion}. More precisely, in Figure \ref{figs_dissip}, we observe that the technique we propose based on the use of PMLs at the corner, which is essential to approximate the solution without dissipation, is also necessary when losses are small.\\

This text is organized as follows. In section \ref{sec:pb_setting}, we define the problem and introduce an equivalent formulation set in a bounded domain using a classical Dirichlet-to-Neumann operator. Then for a contrast outside the critical interval, we prove it has a unique solution using the $\tT$-coercivity approach. In the rest of the paper, we consider the case of a contrast inside the critical interval. In section \ref{sec:new_framework}, we provide a detailed description of the singularities at corners. In particular, we give a systematic criterion to select the outgoing singularity which has to be taken into account through an adequate radiation condition at the corner. This condition yields a well-posed problem for a contrast inside the critical interval. Standard finite element methods fail to approximate the solution in the new framework because it is too singular. In section \ref{sec:PMLs}, we introduce an original numerical method to solve this problem. The idea is to use a coordinates transformation which maps a small disk around corners to semi-infinite strips. Then we implement Perfectly Matched Layers in the semi-infinite strips. We illustrate the method showing numerical results in the case of a triangular silver inclusion embedded in vacuum. Finally in section \ref{sectionDiscussion}, we conclude the paper with further discussions.

\section{Setting of the problem}\label{sec:pb_setting}

\subsection{The scattering problem with sign-changing permittivity}\label{ssec_pres_pb}

Propagation of time-harmonic electromagnetic waves in an inhomogeneous, isotropic, and lossless
medium is described by Maxwell's equations $i \om \eps \boldsymbol{E} + \textbf{\curl }\boldsymbol{H} = 0$ and $-i \om \mu\boldsymbol{H} + \textbf{\curl }\boldsymbol{E} = 0 $ for all $(\bfx,z)=(x,y,z)\in\R^3$. Here, $\boldsymbol{E}$, $\boldsymbol{H}$ correspond respectively to the electric and magnetic fields while $\eps$, $\mu$ are the dielectric permittivity and magnetic permeability. Introduce $\Om_{\text{m}}$ a bounded open set of $\R^2 $ with Lipschitz boundary $\Sigma :=\partial \Om_{\text{m}}$ and define $\Om_{\text{d}} := \eR^2 \setminus \overline{\Om_{\text{m}}}$ (see Figure \ref{domain0}). In this notation, the subscripts ${}_{\text{m}}$ and ${}_{\text{d}}$ stand for ``metal'' and ``dielectric'' respectively. We assume that the real-valued functions $\eps$, $\mu$ verify $\eps:=\eps_r\eps_0$, $\mu:=\mu_r\mu_0$ with
\[
\eps_r = \left\{\begin{array}{ll}\eps_{\text{d}} >0 & \mbox{ in }\Om_{\text{d}}\times\R\\
\eps_{\text{m}}(\omega) <0 & \mbox{ in }\Om_{\text{m}}\times\R\\
\end{array}\right.\qquad \qquad\mbox{ and }\qquad\mu_r = \left\{\begin{array}{ll}\mu_{\text{d}} >0 & \mbox{ in }\Om_{\text{d}}\times\R\\
\mu_{\text{m}}>0 & \mbox{ in }\Om_{\text{m}}\times\R\\
\end{array}\right.,
\]
where $\eps_{\text{m}}(\omega)$ follows the dissipationless Drude's model \eqref{eq:drude}, and $\eps_\di $, $\mu_\di $, $\mu_{\text{m}}$ are three positive constants. Here $\eps_0>0 $ (resp. $\mu_0>0$) refers to the vacuum  permittivity (resp. permeability). If an incident field $u^{\mrm{inc}}  $ independent of the variable $z$ illuminates the obstacle, for instance $u^{\mrm{inc}}(\bfx) = e^{i\bfk \cdot \bfx} $ with $ \vert \bfk \vert = k:=\om\sqrt{\eps_0\mu_0}\sqrt{\eps_\di\mu_\di} $, one can classically reduce the study of Maxwell's equations to the resolution of two uncoupled 2D scalar problems: one associated with $(H_x,H_y,E_z)$ called the Transverse Electric problem (TE), another associated with $(E_x,E_y,H_z)$ called the Transverse Magnetic problem (TM). In particular for the TM problem, $H_z$, denoted by $u$ in the following, is a solution of the problem
\begin{equation}\label{eq:startingpb0}
\begin{array}{|ll}
\text{Find } u = u^{\mrm{inc}} + u^{\mrm{sca}} \text{ such that:} \\[6pt]
\dsp\div\left(\eps_r^{-1}\nabla u \right) + {\om^2}{c^{-2}} \mu_r u = 0 \quad \text{in } \R^2 \\[6pt]
\dsp\lim_{\xi \to + \infty} \int_{\vert \bfx \vert = \xi } \Big\vert \diff{u^{\mrm{sca}}}{r} - i k u^{\mrm{sca}} \Big\vert^2\,d\sigma  = 0,
\end{array}
\end{equation}
where $c := (\sqrt{\eps_0\mu_0})\inv $ denotes the light speed. In (\ref{eq:startingpb0}), the incident field $u^{\mrm{inc}}$ is the data defined above,  the total field $u$ is the unknown and $u^{\mrm{sca}}:=u-u^{\mrm{inc}}$ is the field scattered by the metallic inclusion. The second equation in (\ref{eq:startingpb0}) is the Sommerfeld radiation condition which ensures that $u^{\mrm{sca}}$ is outgoing at infinity ($r$ is the radial coordinate). Here and in the following, we use the same notation for $\eps_r$, $\mu_r$ considered as functions defined on $\R^2$ or $\R^3$. We emphasize that the study of Problem (\ref{eq:startingpb0}) is not standard because $\eps_r $ is sign-changing. 

\begin{figure}[!ht]
\centering
\def\svgwidth{0.62\columnwidth}
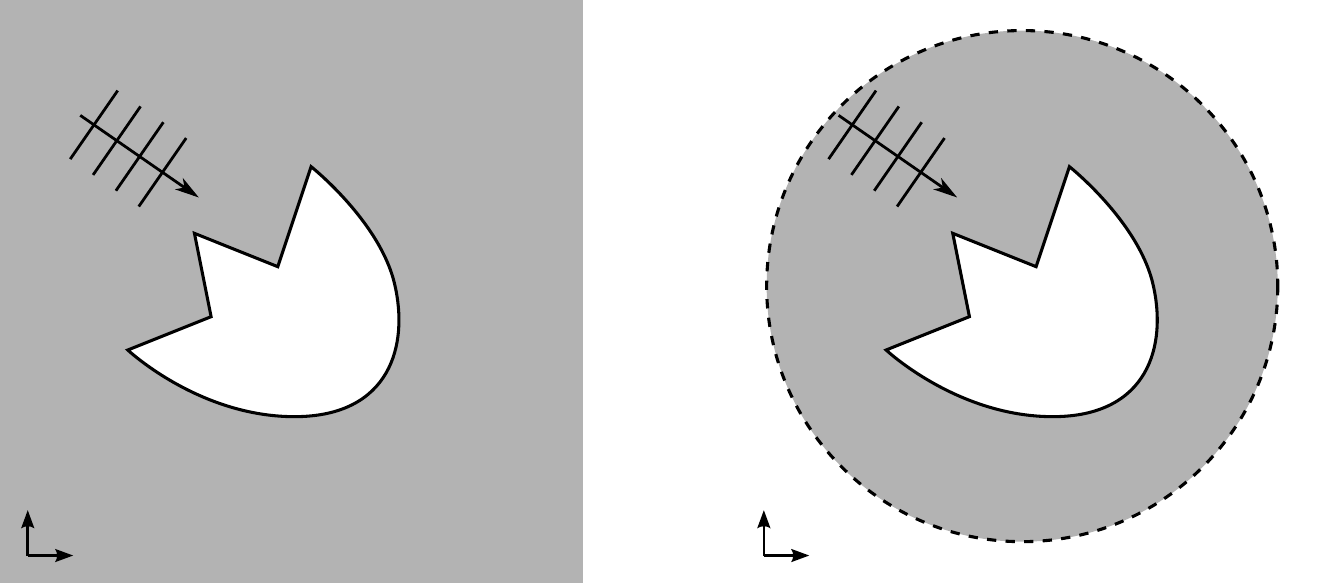
\caption{Left: scattering problem in free space. Right: scattering problem in the artificially bounded domain $D_R$.  \label{domain0}}
\end{figure}

\noindent For the rest of the paper, the index $ _r$ for the parameters $\eps_r $, $\mu_r $ is omitted and we define $k_0 := \omega/c $. We fix the frequency $\omega $ and we write $\eps_\me $ instead of $\eps_\me (\omega) $. Then Problem \eqref{eq:startingpb0} can be rewritten as
\begin{equation}\label{eq:startingpb}
\begin{array}{|ll}
\text{Find } u = u^{\mrm{inc}} + u^{\mrm{sca}} \text{ such that:} \\[6pt]
\dsp\div\left(\eps^{-1}\nabla u \right) + k^2_0 \,\mu u = 0 \quad \text{in } \R^2 \\[6pt]
\dsp\lim_{\xi \to + \infty} \int_{\vert \bfx \vert = \xi } \Big\vert \diff{u^{\mrm{sca}}}{r} - i k u^{\mrm{sca}} \Big\vert^2\,d\sigma  = 0
\end{array}
\end{equation}
or, equivalently, as 
\[
\begin{array}{|rcllrcll}
\multicolumn{8}{|l}{\text{Find } u = u^{\mrm{inc}} + u^{\mrm{sca}} \text{ such that:} \phantom{\eps_{\text{d}}^{-1}\partial_{n}u|_{\Om_{\text{d}}}-\eps_{\text{m}}^{-1}\partial_{n}u|_{\Om_{\text{m}}}  = 0 \quad \text{on } \Sigma \qquad\qquad\qquad }} \\[6pt]
\dsp  \Delta u  + k^2_0 \,\eps_{\text{d}}\mu_{\text{d}}\phantom{.} u & = &  0 \quad \text{in } \Om_{\text{d}} & \qquad\qquad u|_{\Om_{\text{d}}}-u|_{\Om_{\text{m}}} & = & 0 \quad \text{on } \Sigma \\[6pt]
\dsp  \Delta u  + k^2_0 \,\eps_{\text{m}}\mu_{\text{m}} u & = & 0 \quad \text{in } \Om_{\text{m}} & \qquad\qquad \eps_{\text{d}}^{-1}\partial_{n}u|_{\Om_{\text{d}}}-\eps_{\text{m}}^{-1}\partial_{n}u|_{\Om_{\text{m}}} & = & 0 \quad \text{on } \Sigma \\[6pt]
\multicolumn{8}{|l}{\dsp\lim_{\xi \to + \infty} \int_{\vert \bfx \vert = \xi } \Big\vert \diff{u^{\mrm{sca}}}{r} - i k u^{\mrm{sca}} \Big\vert^2\,d\sigma  = 0,}
\end{array}
\]
where $n$ denotes the unit normal to $\Sigma$ oriented from $\Om_{\text{d}}$ to $\Om_{\text{m}}$.

\subsection{Reduction to a bounded domain and description of the geometry}\label{ssec_bounded_dom}

In order to study Problem (\ref{eq:startingpb}), as it is usual in the analysis of scattering problems in free space, we first introduce an equivalent formulation set in a bounded domain. Let $D_R:= \{\bfx\in\eR^2\,|\,|\bfx|< R\}$ denote the open disk of radius $R$. We take $R$ large enough so that $\overline{\Om_{\text{m}}}\subset D_R$. Classically (work e.g. as in \cite[Lemma 5.22]{CaCo06}), one proves that $u $ is a solution of (\ref{eq:startingpb}) if and only if it satisfies the problem 
\begin{equation}\label{eq:Sommerfeld_exacte}
\begin{array}{|lcl}
\div\left({\eps}^{-1}\nabla u\right) + k^2_0 \, \mu u = 0 \quad \text{ in } D_R \\[6pt]
 \diff{u}{r} =   \mathcal{S} u + g^{\mrm{inc}}\quad \text{ on } \partial D_R,\qquad\mbox{ where }g^{\mrm{inc}} :=  \diff{u^{\mrm{inc}}}{r} - \mathcal{S} u^{\mrm{inc}}.
\end{array}
\end{equation}
Here, $(r,\theta)$ stand for the polar coordinates centered at $O$, the center of $ D_R$, while $\mathcal{S}$ refers to the so-called Dirichlet-to-Neumann map. The action of $\mathcal{S}$ can be  described decomposing $u$ in Fourier series (see \cite[Theorem 5.20]{CaCo06}):   
\begin{equation}\label{eq:DtN}
\mathcal{S} u(R,\theta) = \sum \limits_{n= -\infty}^{+\infty}  u_n  \Frac{ k\,H^{(1)'}_n (k R)}{H^{(1)}_n (k R)} \Frac{e^{i n \theta}}{\sqrt{2 \pi}}\qquad\mbox{ with }\qquad u_n = \Frac{1}{\sqrt{2 \pi}} \dsp\int_0^{2 \pi}  u(R,\theta ) e^{-i n \theta}\,d\theta.
\end{equation}
In this expression, $H^{(1)}_n$ denotes the Hankel function of first kind and $H^{(1)'}_n$ its derivative. 

\begin{figure}[!ht]
\centering
\def\svgwidth{0.8\columnwidth}
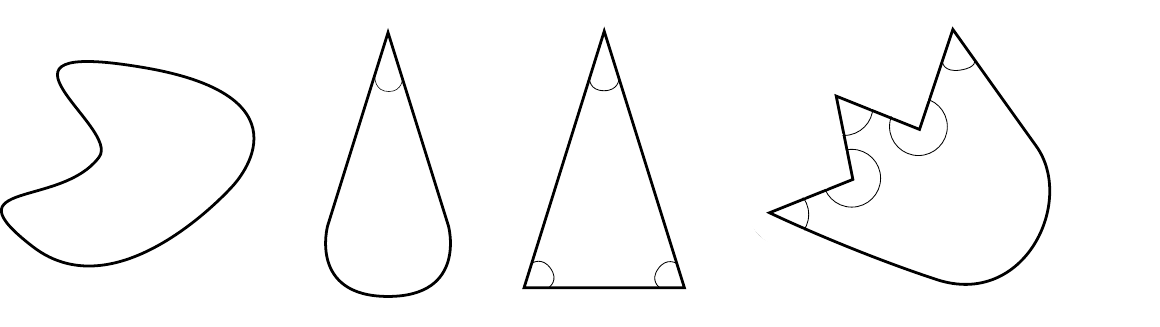
\caption{Examples of geometries. From left to right:  a smooth inclusion ($N=0$), a droplet ($N=1$), a triangle ($N=3$) and a more complicated inclusion ($N=5 $). \label{geometries}}
\end{figure}

We have already mentioned that the geometrical features of the interface $\Sigma=\partial \Om_{\mrm{m}}$ play a key role in the results of well-posedness for problems with sign-changing coefficients. In particular, one has to distinguish when the contrast $\kappa_\eps  := \dsp {\eps_\me}/{\eps_\di}$ is outside or inside the critical interval $I_c$. In order to define this interval, we start by describing the geometry precisely, introducing \textit{ad hoc} notations. We assume that $\Sigma$ is of class $\mathscr{C}^2$ at any $\bfx\in\Sigma$, except at a finite (possibly empty) set of vertices $\bfc_n$, $n=1,\dots,N$. We assume that in a neighbourhood of each vertex $\bfc_n$, $\Om_\text{m}$ coincides with a sector of aperture $\phi_n$ (see an illustration with Figure \ref{geometries}). Set 
\begin{equation}\label{defParamIntervalle}
b_{\Sigma} := \left\{
\begin{array}{ll}
\dsp \max \limits_{n=1,\dots,N} \left( \frac{2\pi -\phi_{n}}{\phi_{n}} ,\frac{\phi_{n}}{2 \pi-\phi_{n}}\right)& \mbox{ if }\Sigma\mbox{ has corners }(N\ge1)\\[6pt]
1 & \mbox{ if }\Sigma\mbox{ is smooth }(N=0).
\end{array}\right.
\end{equation}
\noindent Finally, we define the critical interval $I_c $ by:
\begin{equation}\label{eq:Ic}
I_c := [-b_\Sigma ; -1/ b_\Sigma].
\end{equation}
Details about the derivation of \eqref{eq:Ic} will be given in section \ref{sec:new_framework}. Note that the critical interval $I_c\subset (-\infty;0)$ always contains the value $-1$ (because $b_{\Sigma}\geq 1$) and that there holds $I_c  = \lbrace -1 \rbrace$ if and only if $\Sigma$ is smooth ($N=0$). If $N\ge1$, denoting $\phi_{\min} := \min_{n=1,\dots,N} (\,\phi_{n}\,,\,2\pi -\phi_{n}\,)$, we can write $b_{\Sigma}=(2\pi-\phi_{\min})/\phi_{\min}$. This shows that the critical interval $I_c$ is determined by the aperture of the sharpest angle of the interface. Observe that if $\phi_{\min} \to0$, then $I_c\to (-\infty;0)$.\\

Before studying well-posedness of Problem \eqref{eq:Sommerfeld_exacte}, let us finish this section by some comments. Previous studies for problems with sign-changing coefficients have shown that the adapted functional framework is $H^1$ when the contrast $\ke $ is outside $I_c $, while the solution becomes too singular to belong to $H^1 $ when $\ke \in I_c $ \cite{BoChCi12, BoChCl13}. This will remain true for our scattering problem, and it will have a significant impact on the energy balance. Indeed, when the solution $ u$ of Problem \eqref{eq:Sommerfeld_exacte} is in $H^1(D_R) $, a simple integration by parts yields: \\
\begin{equation}\label{integParts}
\e\di\inv \dsp\int_{\partial D_R} \diff{u}{r}\overline{u}\,d\sigma = \dsp\int_{D_R} \eps^{-1}|\nabla u|^2\,d\bfx - k_0^2\dsp\int_{D_R}  \mu \, |u|^2\,d\bfx.\end{equation}
Taking the imaginary part of (\ref{integParts}), since $\eps$ and $\mu$ are real valued, we obtain the assertion:
 
\begin{equation}\label{eq:DR}
\mbox{ ``$u \in H^1(D_R)$ solution of Problem \eqref{eq:Sommerfeld_exacte}''} \quad \Longrightarrow\quad \Im m\, \left( \dsp \int_{\partial D_R} \frac{\partial u}{\partial r} \overline{u}\, d \sigma \right) = 0.
\end{equation}
This means that the energy flux through $\partial D_R $, in fact through any curve enclosing the obstacle, is equal to $0$ (which seems natural because the medium is non dissipative). We will see in section \ref{sec:new_framework} that this property is not satisfied  when $\kappa_{\eps}\in I_c$. In this situation, some energy is trapped by the corners.

\subsection{Well-posedness in the classical framework for a contrast outside the critical interval}

In this section we explain how to show that Problem \eqref{eq:Sommerfeld_exacte} is well-posed in the usual functional framework $H^1(D_R)$ when $\kappa_{\eps}\notin I_c$. The variational formulation of Problem (\ref{eq:Sommerfeld_exacte}) is given by: 
\begin{equation}\label{eq:FV_Sommerfeld_exacte}
\begin{array}{|lcl}
\text{Find } u \in H^1(D_R) \text{ such that: } \\[6pt]
a(u,v)= l(v), \qquad \forall v \in H^1(D_R),
\end{array} 
\end{equation}
with $a(u,v)\hspace{-0.1cm}=\hspace{-0.1cm} \dsp\int_{D_R} \eps^{-1}\nabla u \cdot \overline{\nabla v}\,d\bfx - k^2_0\hspace{-0.1cm}\dsp\int_{D_R}  \mu \, u \overline{v}\,d\bfx- \sum  \limits_{n= -\infty}^{+\infty}  \frac{k}{\e\di}\,\Frac{ H^{(1)'}_n (kR)}{H^{(1)}_n (kR)} u_n \overline{v_n}$,\quad $l(v)\hspace{-0.1cm}=\hspace{-0.1cm} \dsp\int_{\partial D_R} \frac{g^{\mrm{inc}}}{\e\di} \overline{v}\,d\sigma$.\\[6pt]

\noindent If $u\in H^1_{\mrm{loc}}(\R^2)$ is a solution of the original Problem \eqref{eq:startingpb}, then its restriction to $D_R$ satisfies (\ref{eq:FV_Sommerfeld_exacte}). Conversely if $u$ verifies (\ref{eq:FV_Sommerfeld_exacte}) then it can be extended as a solution of \eqref{eq:startingpb} in $H^1_{\mrm{loc}}(\R^2)$. 
\begin{lemma}\label{lem:uniqueness}
Problem \eqref{eq:startingpb} has at most one solution in $H^1_{\mrm{loc}}(\R^2)$.
\end{lemma}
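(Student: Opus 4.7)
The plan is to follow the standard template for uniqueness of exterior scattering problems, with a small twist to handle the transmission interface $\Sigma$. Let $u_1,u_2\in H^1_{\mrm{loc}}(\R^2)$ be two solutions of Problem \eqref{eq:startingpb} and set $w:=u_1-u_2$. Then $w$ solves the same equation with zero incident field, so it is purely scattered: $w$ itself satisfies the Sommerfeld radiation condition, and the transmission conditions across $\Sigma$ are inherited from $u_1$ and $u_2$.

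The core identity is a Green's formula on $D_R$ for $R$ large enough that $\overline{\Om_{\mrm{m}}}\subset D_R$. Since $w|_{D_R}\in H^1(D_R)$ and $\eps,\mu$ are real valued, integration by parts (which absorbs the interface conditions through the weak formulation) gives the analogue of \eqref{integParts} for $w$; its imaginary part yields $\Im\int_{\partial D_R}\partial_r w\,\overline{w}\,d\sigma=0$. Expanding
\[
\int_{\partial D_R}\Big|\diff{w}{r}-ikw\Big|^2\,d\sigma = \int_{\partial D_R}\Big(|\partial_r w|^2+k^2|w|^2\Big)\,d\sigma - 2k\,\Im\int_{\partial D_R}\partial_r w\,\overline{w}\,d\sigma
\]
and using the Sommerfeld condition on $w$, one concludes that $\int_{\partial D_R}|w|^2\,d\sigma\to 0$ as $R\to\infty$.

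Next, Rellich's lemma applied in the exterior region $\{|\bfx|>R_0\}$, where $w$ satisfies the constant-coefficient Helmholtz equation $\Delta w+k^2 w=0$, forces $w\equiv 0$ for $|\bfx|>R_0$. Since $\Om_{\mrm{d}}$ is connected and $w$ satisfies a Helmholtz equation there with constant coefficients, classical unique continuation propagates the vanishing to all of $\Om_{\mrm{d}}$.

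It remains to cross the interface, which I expect to be the most delicate point. From the transmission conditions, $w|_{\Om_{\mrm{m}}}=w|_{\Om_{\mrm{d}}}=0$ and $\eps_{\mrm{m}}^{-1}\partial_n w|_{\Om_{\mrm{m}}}=\eps_{\mrm{d}}^{-1}\partial_n w|_{\Om_{\mrm{d}}}=0$ on the smooth part of $\Sigma$. This provides vanishing Cauchy data on a portion of $\partial\Om_{\mrm{m}}$ of positive surface measure for the constant-coefficient Helmholtz equation $\Delta w+k_0^2\eps_{\mrm{m}}\mu_{\mrm{m}}w=0$ satisfied inside $\Om_{\mrm{m}}$, so a second application of unique continuation yields $w\equiv 0$ in $\Om_{\mrm{m}}$ as well. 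The corners $\bfc_n$ do not obstruct this argument since they form a set of zero $1$-dimensional measure on $\Sigma$; note also that the critical interval plays no role here, as the argument only exploits $w\in H^1_{\mrm{loc}}$ and the reality of $\eps$ and $\mu$.
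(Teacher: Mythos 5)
Your proof is correct and follows essentially the same route as the paper: reduce to a solution with zero incident field, take the imaginary part of the Green identity to kill the flux term in the expansion of the Sommerfeld integral, apply Rellich's lemma to get vanishing outside a large disk, and then propagate the vanishing by unique continuation (Holmgren) first through $\Om_{\mrm{d}}$ and then across $\Sigma$ into $\Om_{\mrm{m}}$ using the zero Cauchy data supplied by the transmission conditions. No substantive difference from the paper's argument.
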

\begin{proof}
We proceed exactly as when $\eps $ has a constant sign. By linearity of the problem, we just have to show that a solution $u \in H^1_{\mrm{loc}}(\R^2)$ of \eqref{eq:startingpb}  with $u^{\mrm{inc}}=0 $ vanishes. For $\xi \geq R $, there holds
\begin{equation}\label{DvpSommerfeld}
 \dsp\int_{\partial D_\xi } \left|\diff{u}{r}-ik u \right|^2 d\sigma =\dsp\int_{\partial D_\xi } \left|\diff{u}{r}\right|^2 d\sigma+k^2\dsp\int_{\partial D_\xi } \left| u \right|^2 d\sigma- {{2 k\Im m\,(\dsp\int_{\partial D_\xi } \diff{u}{r}\,\overline{u}\,d\sigma)}},
\end{equation}
where $D_\xi =  \lbrace \bfx \in \R^2\,|\,  \vert \bfx \vert < \xi\rbrace$. Then using \eqref{eq:DR} (which is also valid with $R$ replaced by any $\xi \geq R $) and the Sommerfeld radiation condition, we obtain
\[
\lim_{\xi\to+\infty}\dsp\int_{\partial D_\xi } \left| u \right|^2 d\sigma=0.
\]
Since $u$ satisfies $\Delta u + k^2_0 \mu_\di \eps_\di u =0$ in $\R^2\setminus \overline{D_R}$, Rellich's lemma (see e.g. \cite[Theorem 3.5]{CaCo06}) guarantees that $u=0$ in $\R^2\setminus\overline{D_R}$. From Holmgren's theorem \cite[Theorem 8.6.5]{Horm03}, working as in the end of the proof of \cite[Lemma 5.23]{CaCo06}, successively we deduce that $u=0$ in $\Om_{\text{d}}$ and $u=0$ in $\Om_{\text{m}}$.
\end{proof}

Lemma \ref{lem:uniqueness} ensures in particular that a metallic inclusion in the free space cannot trap a pure resonant wave. Now let us prove the main result for Problem (\ref{eq:startingpb}) outside the critical interval.

\begin{proposition}\label{prop:pb_wp}
Let $\om>0$ be a given frequency. Assume that the contrast $\kappa_{\eps}=\e{\text{m}}/\e{\text{d}}$ verifies $\kappa_{\eps}\notin I_c=[-b_{\Sigma};-1/b_{\Sigma}]$, where $b_\Sigma$ has been defined in \eqref{defParamIntervalle}. Then Problem (\ref{eq:startingpb}) has a unique solution $u$ in $H^1_{\mrm{loc}}(\R^2)$. Moreover there exists $C>0$ independent of the data $g^{\mrm{inc}} $ such that
\[ \Vert u \Vert_{H^1(D_R)} \leq C \Vert g^{\mrm{inc}}  \Vert_{L^2(\partial D_R)}.\]
\end{proposition}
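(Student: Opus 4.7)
The plan is to establish existence by proving that the variational formulation \eqref{eq:FV_Sommerfeld_exacte} on $D_R$ corresponds to a Fredholm operator $\mA$ of index zero on $H^1(D_R)$, then to invoke Lemma \ref{lem:uniqueness} for injectivity. Since the antilinear form satisfies $|l(v)|\le C\|g^{\mrm{inc}}\|_{L^2(\partial D_R)}\|v\|_{H^1(D_R)}$, the announced estimate will follow from the open mapping theorem applied to $\mA^{-1}$. Because the equivalence between $H^1_\mrm{loc}(\R^2)$-solutions of \eqref{eq:startingpb} and $H^1(D_R)$-solutions of \eqref{eq:FV_Sommerfeld_exacte} has already been established, I only have to work on $D_R$.

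I would decompose $a = a_\sharp - k_0^2\, b - d$, where $a_\sharp(u,v)=\int_{D_R}\eps^{-1}\nabla u\cdot\overline{\nabla v}\,d\bfx$, $b(u,v)=\int_{D_R}\mu\,u\overline{v}\,d\bfx$ and $d$ is the DtN bilinear form. The term $b$ is compact on $H^1(D_R)$ by Rellich. Using the asymptotics $kH_n^{(1)'}(kR)/H_n^{(1)}(kR)=-|n|/R+O(1)$ as $|n|\to\infty$, the form $-d$ splits as a positive semi-definite contribution controlling the $H^{1/2}(\partial D_R)$-seminorm, plus a finite-rank (hence compact) perturbation. Therefore the crux is to establish the $\tT$-coercivity of the principal part $a_\sharp$, i.e.\ to produce an isomorphism $\tT$ of $H^1(D_R)$ and constants $\alpha>0$, $C\ge 0$ such that
\[
|a_\sharp(u,\tT u)|\;\ge\;\alpha\,\|u\|^2_{H^1(D_R)}\;-\;C\,\|u\|^2_{L^2(D_R)},\qquad \forall u\in H^1(D_R).
\]

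To construct $\tT$, I would follow the recipe of \cite{BoChCi12,BoCZ10}. Pick a tubular neighbourhood $\mathcal{U}$ of $\Sigma$ compactly contained in $D_R$, and a $\mathscr{C}^2$ diffeomorphism $\mathcal{R}$ mapping $\mathcal{U}\cap\Om_{\text{m}}$ onto a subset of $\mathcal{U}\cap\Om_{\text{d}}$, equal to the identity on $\Sigma$. For a cut-off $\chi$ equal to $1$ near $\Sigma$ and vanishing near $\partial D_R$, define $Ru:=\chi\cdot(u\circ\mathcal{R}^{-1})$ and set
\[
\tT u = \left\{\begin{array}{ll} -u & \text{in }\Om_{\text{m}}\\ u-2Ru & \text{in }\Om_{\text{d}}\end{array}\right.\quad\text{or the mirror choice}\quad \tT u = \left\{\begin{array}{ll} u-2R' u & \text{in }\Om_{\text{m}}\\ -u & \text{in }\Om_{\text{d}}\end{array}\right.,
\]
the selection being dictated by whether $\kappa_\eps<-1/b_\Sigma$ or $\kappa_\eps>-b_\Sigma$. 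Each $\tT$ is an involution up to compact perturbation, hence an isomorphism of $H^1(D_R)$. Expanding $a_\sharp(u,\tT u)$ yields the signed sum $-\eps_{\text{m}}^{-1}\|\nabla u\|^2_{\Om_{\text{m}}}+\eps_{\text{d}}^{-1}\|\nabla u\|^2_{\Om_{\text{d}}}$ plus cross-terms that, after a Young inequality, can be absorbed provided $|\kappa_\eps|\cdot\|R\|^2_\nabla<1$ or $|\kappa_\eps|^{-1}\|R'\|^2_\nabla<1$, where $\|\cdot\|_\nabla$ denotes the gradient operator norm. The hypothesis $\kappa_\eps\notin I_c$ is exactly what is needed to make this work with an appropriate reflection.

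The principal obstacle is to build $\mathcal{R}$ with a gradient norm precisely controlled by $b_\Sigma$. Away from corners this is classical since $\Sigma$ is $\mathscr{C}^2$. Near each vertex $\bfc_n$ one must use the local angular reflection $(r,\theta)\mapsto(r,(2\pi-\phi_n)\theta/\phi_n)$ in a sector of aperture $\phi_n$, whose Jacobian yields the ratio $\max(\phi_n/(2\pi-\phi_n),(2\pi-\phi_n)/\phi_n)$, matching exactly the definition \eqref{defParamIntervalle}. Assembling the local reflections by a partition of unity produces only lower-order errors, harmless for $\tT$-coercivity. Once the coercivity estimate is in hand, $\mA$ is Fredholm of index $0$; combined with Lemma \ref{lem:uniqueness}, which forces any $H^1(D_R)$-solution of the homogeneous problem to be the restriction of a zero $H^1_\mrm{loc}(\R^2)$-solution, this gives bijectivity of $\mA$ and the bound $\|u\|_{H^1(D_R)}\le C\|g^{\mrm{inc}}\|_{L^2(\partial D_R)}$.
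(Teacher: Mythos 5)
Your proposal is correct and follows the same overall skeleton as the paper: reduce to the variational problem \eqref{eq:FV_Sommerfeld_exacte}, split off a compact part, prove that the principal operator is Fredholm of index zero by $\tT$-coercivity, and conclude with Lemma \ref{lem:uniqueness} and the open mapping theorem. The only genuine divergence is in how $\tT$ is built. You reconstruct the reflection-based operator from scratch ($\tT u=-u$ on one side, $u-2Ru$ on the other, with $R$ an angular reflection near each corner whose squared gradient norm is $\max\bigl((2\pi-\phi_n)/\phi_n,\,\phi_n/(2\pi-\phi_n)\bigr)$, so that the Young-inequality threshold reproduces exactly $b_\Sigma$ of \eqref{defParamIntervalle}). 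The paper instead avoids redoing this on $D_R$: in Annex \ref{Annex-Tcoercivity} it invokes \cite[Theorem 4.3]{BoChCi12} to obtain an isomorphism $\tT_0$ as the solution operator of an auxiliary interior Dirichlet problem on a disk $D_{R_0}\supset\overline{\Om_{\text{m}}}$, and glues it to the identity via $\tT u=\chi\tT_0(\chi u)+(1-\chi^2)u$, the cut-off commutators supplying the compact remainder $\mathcal{K}$. Your route is more self-contained but carries the technical burden (construction of $\mathcal{R}$ with controlled gradient norm, assembly by partition of unity) that the paper delegates to the reference; the paper's route is shorter but leaves the origin of $b_\Sigma$ hidden in the citation. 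Two small corrections to your write-up: (i) after subtracting the principal symbol $-|n|/R$, the Dirichlet-to-Neumann remainder is only $O(1)$ in $n$, so the residual form is \emph{not} finite-rank; it is nevertheless compact on $H^1(D_R)$ because the trace map into $L^2(\partial D_R)$ is compact, which is all you need. (ii) The dichotomy governing the choice between $\tT$ and its mirror should read ``$\kappa_\eps<-b_\Sigma$ or $\kappa_\eps>-1/b_\Sigma$'' (the two components of the complement of $I_c$ in $(-\infty;0)$), not ``$\kappa_\eps<-1/b_\Sigma$ or $\kappa_\eps>-b_\Sigma$'', which is not a partition of the admissible contrasts.
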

\begin{proof}Let us decompose the bilinear form $a(\cdot,\cdot)$ of the variational formulation \eqref{eq:FV_Sommerfeld_exacte} as follows:
\begin{equation}\label{bilinear_b}
\begin{array}{ll}
&\forall u,v \in H^1(D_R), \quad  a(u,v)  = b(u,v) + c(u,v),\\
& \mbox{with } \quad b(u,v)=  \dsp\int_{D_R} \eps^{-1}\nabla u \cdot \overline{\nabla v}\,d\bfx + \e\di\inv  \sum \limits_{n= -\infty}^{+\infty} \Frac{ n}{R}\, u_n  \overline{v_n },
\end{array}
\end{equation}
$u_n $, $v_n$ being defined as in \eqref{eq:DtN} (one deduces $c(\cdot, \cdot)$ from above). With the Riesz representation theorem, we introduce the maps $\mathcal{B}:H^1(D_R)\to H^1(D_R)$, $\mathcal{C}:H^1(D_R)\to H^1(D_R)$ as well as the function $f \in H^1(D_R)$ such that
\begin{equation} \label{def_op}
(\mathcal{B} u,v)_{H^1(D_R)}  = b(u,v),\quad (\mathcal{C} u,v)_{H^1(D_R)}= c(u,v), \quad (f,v)_{H^1(D_R)} = l(v), \quad \forall u,v \in H^1(D_R). 
\end{equation} 
With these notations, $u$ is a solution of \eqref{eq:FV_Sommerfeld_exacte} if and only if it satisfies $\mathcal{B}u +\mathcal{C}u = f $. Using classical results concerning the Dirichlet-to-Neumann map $\mathcal{S}$ (following e.g. \cite[Theorem 5.20]{CaCo06}), as well as the compact embedding of $H^1(D_R)$ in $L^2(D_R)$, one can prove that $\mathcal{C}$ is a compact operator. Therefore it is sufficient to show that $\mathcal{B}$ is a Fredholm operator to conclude. \\
In the classical case of a positive $\eps$, one obtains easily the result thanks to the Lax-Milgram theorem. Here, because of the change of sign of $\eps$ in $D_R$, $b(\cdot, \cdot) $ is not of the form ``coercive+compact'' and this technique fails. Instead, we use the $\tT $-coercivity approach. More precisely, when $\ke \not \in  I_c$, we  prove in Annex \ref{Annex-Tcoercivity} the existence of a bounded operator $\tT:H^1(D_R)\to H^1(D_R)$ such that
\begin{equation}\label{eq:T_coerc}
\mathcal{B}\circ\tT = \mathcal{I}+\mathcal{K},
\end{equation}
where $\mathcal{I}:H^1(D_R)\to H^1(D_R)$ is an isomorphism and where $\mathcal{K}:H^1(D_R)\to H^1(D_R)$ is compact. Since $\mathcal{B}$ is selfadjoint ($\mathcal{B}$ is bounded and symmetric), according to classical results of functional analysis (see \cite[Theorem 12.5]{Wlok87}), this is enough to conclude that it is a Fredholm operator (of index zero).
\end{proof}
\noindent Let us make some comments regarding this approach:\\[5pt]
$\bullet\ $ The analysis developed here can be adapted to varying coefficients $\eps,\, \mu $. In this case, the condition \cite[Theorem 4.3]{BoChCi12} to guarantee Fredholmness for Problem (\ref{eq:Sommerfeld_exacte}) with the \texttt{T}-coercivity technique involves the ratio of local upper and lower bounds of $\eps_{\text{d}}$, $\e{\text{m}} $ in a neighbourhood (which can be chosen as small as we want) of $\Sigma$. In order to prove uniqueness of the solution, an additional smoothness assumption on $\eps$ (imposing e.g. $\eps$ to be piecewise smooth) has to be made to apply unique continuation results.\\
$\bullet\ $ In the (TE) problem, the electric field $E_z$ satisfies the equation $\div\left(\mu^{-1}\nabla E_z \right) + k_0^2 \eps E_z = 0$ in $\R^2$. Thus, since $\mu_{\text{m}}>0$, we find that the (TE) problem is always well-posed: the change of sign of $\eps$ does not matter in the (TE) problem. \\
$\bullet\ $ One can also consider sign-changing permeabilities, for example when one wishes to model the propagation of electromagnetic fields in presence of Negative Index Metamaterials (NIM) \cite{Pend00,SmPW04,Anan05}. Defining the contrast $\kappa_{\mu} := \mu_{\text{m}}/\mu_{\text{d}}$, for the (TM) problem, well-posedness holds for $\kappa_{\eps}\notin I_c$ and for all $\kappa_{\mu}<0$. Also for the (TE) problem, well-posedness is guaranteed for all $\kappa_{\eps}<0$ when $\kappa_{\mu}\notin I_c$.

\section{Singular solutions at one corner}\label{sec:new_framework}

When the interface between the two materials presents corners, then for a contrast $\kappa_{\eps}$ inside the critical interval $I_c$, well-posedness of Problem (\ref{eq:startingpb}) in the usual $H^1$ framework is lost. This is due to the appearance of strongly oscillating singularities at one or several corner(s). This has been already investigated in \cite{DaTe97,BoChCl13,ChCNSu} for a particular geometry involving one corner of aperture $\pi/2 $. Here we wish to study the general case of an arbitrary corner angle.

\subsection{Characterization of singular exponents}\label{ssec:singular_expo}

For ease of exposition, we consider a metallic inclusion with only one corner, that we denote by $\bfc$. Without loss of generality we assume that $\bfc $ is located at the origin $O$. In accordance with the setting of \S\ref{ssec_bounded_dom}, $\Om_\text{m}$ coincides in the vicinity of the vertex $\bfc $ with a sector of aperture $\phi\in (0; 2 \pi)\setminus\{\pi\}$ (see Figure \ref{img:zoom} left, page \pageref{img:zoom}). In other words, there exists $\rho>0 $ such that for well-chosen cartesian and polar coordinates $(r,\theta)$: 
\[
\begin{array}{lcl}
D_{\rho}\cap \Om_\text{d} & \hspace{-0.1cm}=\hspace{-0.1cm}  & \{\bfx=(r\cos(\theta),r\sin(\theta))\,|\,0<r<\rho,\,\phi/2 < \vert \theta \vert  < \pi \}, \\[2pt]
D_{\rho}\cap \Om_\text{m} & \hspace{-0.1cm}=\hspace{-0.1cm}  &  \{\bfx=(r\cos(\theta),r\sin(\theta))\,|\,0<r<\rho,\,\vert \theta \vert < \phi/2\},
\end{array}
\]
 where $D_\rho$ denotes the disk of radius $\rho $. In $D_\rho $, the permittivity $\eps $ depends only on $\theta $: 
\[
\eps = \left\{
\begin{array}{ll} 
\eps_\di & \mbox{for }  \phi/2 < \vert \theta \vert  < \pi,\\[3pt]
\eps_\me & \mbox{for } \vert \theta \vert < \phi/2. 
  \end{array}\right. 
\]
 \noindent
In polar coordinates, equation of Problem \eqref{eq:startingpb} multiplied by $r^2 $ reads
 \begin{equation}\label{eq:pb_polar_coord}
 \dsp  \eps\inv (r \partial_r)^2  u  + \partial_\theta (\eps\inv \partial_\theta u) + k^2_0 r^2  \mu u = 0,
 \end{equation}
where we use abusively the same notation for $u(\bfx)$ and $u(r,\theta) $. Zooming at the corner (\textit{i.e.} taking $\rho $ small enough) leads to neglect the term $k^2_0 r^2 \mu u$ in (\ref{eq:pb_polar_coord}) and to study the ``static'' equation \eqref{EquationChangeOfSign}. In other words, the singular behaviour of a solution of \eqref{eq:pb_polar_coord} is the same as the one of a solution of \eqref{EquationChangeOfSign}. The main advantage of considering \eqref{EquationChangeOfSign} is that separation of variables is now possible. In the following, we call singularities the functions $s(r,\theta)=\chi(r)\Phi(\theta)$ with separated variables in polar coordinates which satisfy
\begin{equation}\label{pbVariablesSeparees}
\div(\eps^{-1}\nabla s)=0\qquad \mbox{in } D_\rho,
\end{equation}
that is $\dsp  \eps\inv (r \partial_r)^2  s  + \partial_\theta (\eps\inv \partial_\theta s)=0 $. A direct calculation shows that this holds if and only if there exists $\lambda\in \mathbb{C} $ such that $(r\partial_r)^2\chi=\lambda^2\,\chi$, for $0<r <\rho$ and
\begin{equation}\label{pb_phi}
\begin{array}{|rcl}
-(\partial_{\theta}\eps^{-1} \partial_{\theta})\Phi &= &\lambda^2\,\eps^{-1}\Phi \quad  \mbox{for }- \pi < \theta < \pi,\\[2pt]
\Phi(-\pi) &= & \Phi(\pi),\\[3pt] 
\partial_\theta \Phi (-\pi) & = &\partial_\theta \Phi (\pi).
\end{array}
\end{equation}
The problem verified by $\chi$, for which we do not impose boundary conditions, can be easily solved. For $\lambda=0$, we find $\chi(r)=A\,\ln r+B$ whereas for $\lambda\ne0$, we obtain $\chi(r)=A\,r^\lambda+B\,r^{-\lambda}$, $A$, $B$ being two constants. The Problem \eqref{pb_phi} satisfied by $\Phi$ is an eigenvalue problem. Denote by $\Lambda$ the set of values $\lambda$ such that Problem \eqref{pb_phi} has a non zero solution $\Phi $. This set will be referred to as the set of singular exponents associated to $\bfc$. In the usual case where $\eps>0 $, (\ref{pb_phi}) is a self-adjoint problem with positive eigenvalues $\lambda^2 $ ($\Lambda \subset\R$). Here, because the sign-changing parameter $\eps$ appears in both sides of the equation, the study and the properties of \eqref{pb_phi} are not standard. In particular, $\Lambda $ generally contains complex eigenvalues. Concerning the analysis, to our knowledge there is no theory for this eigenvalue problem. However since it is a $1$D problem, we can carry out explicit computations. First we can prove that, when $\kappa_{\eps}\ne-1$, $\Lambda$ is discrete (proceed as in \cite[Corollary 4.10]{ChCi11}). Moreover, it is straightforward to see that $0 \in \Lambda $ and that $ \Lambda$ is stable by symmetry/conjugation ($  -\lambda, \, \pm \overline{\lambda}  \in \Lambda$ for all $ \lambda \in \Lambda$). Using elementary symmetry arguments, one can compute singular exponents associated with even or odd eigenfunctions $\Phi $. The corresponding singularities $s(r, \theta)$ are either symmetric or skew-symmetric with respect to the bisector of the corner $\ell := \lbrace \bfx =(r \cos \theta, r \sin \theta) \in \R^2\,|\,\, \theta =0 \rbrace $ (see Figure \ref{img:zoom} left). We denote by $\Ls $ (resp. $\Lss $) the subset of $\Lambda $ of singular exponents corresponding to symmetric (resp. skew-symmetric) singularities. Note that $\Lambda = \Ls \cup \Lss $. Simple calculations reproduced in Annex \ref{annex:compute} yield the following characterizations for the sets $ \Ls$, $\Lss $. 

\begin{proposition}\label{prop:fonction_zeros}
Set $b:= ({2 \pi - \phi})/{\phi} $ and $f^\pm (z) = \kappa_\eps^{\pm 1} \tanh(z) + \tanh (bz)$ for $z \in \eC$. We have 
\[
\Ls :=\lbrace\lambda \in \eC\,|\, \dsp f^- \left(i \lambda \phi/{2} \right) = 0\rbrace, \qquad \quad \Lss :=\lbrace\lambda \in \eC\,| \, \dsp f^+ \left(i \lambda {\phi}/{2} \right) = 0\rbrace.
\]
\end{proposition}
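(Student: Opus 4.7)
The plan is to use separation of variables within each region where $\eps$ is constant and then glue the pieces together via the transmission conditions hidden in the formulation $-(\partial_\theta \eps^{-1}\partial_\theta)\Phi = \lambda^2 \eps^{-1}\Phi$. Because $\eps(\theta)$ is an even function of $\theta$, the operator in \eqref{pb_phi} commutes with the symmetry $\theta\mapsto -\theta$, and one can decompose any solution into its even and odd parts. I would therefore treat the symmetric and skew-symmetric cases in parallel, observing that periodicity together with symmetry forces $\partial_\theta\Phi(\pm\pi)=0$ in the symmetric case (since $\partial_\theta \Phi$ is odd and $2\pi$-periodic), and $\Phi(\pm\pi)=0$ in the skew-symmetric case.

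On each subinterval where $\eps$ is constant, the equation reduces to $-\Phi''=\lambda^2\Phi$, whose solutions are spanned by $\cos(\lambda\theta)$ and $\sin(\lambda\theta)$. In the symmetric case I would take $\Phi(\theta)=A\cos(\lambda\theta)$ on the metal sector $|\theta|<\phi/2$ (forced by evenness) and $\Phi(\theta)=B\cos(\lambda(\pi-\theta))$ on $\phi/2<\theta<\pi$ (forced by $\partial_\theta\Phi(\pi)=0$); in the skew-symmetric case the analogous ans\"atze are $A\sin(\lambda\theta)$ and $B\sin(\lambda(\pi-\theta))$. The interface conditions embedded in \eqref{pb_phi} are the continuity of $\Phi$ and of $\eps^{-1}\partial_\theta\Phi$ across $\theta=\phi/2$ (and similarly at $-\phi/2$, but this is automatic by symmetry). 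These produce a $2\times 2$ homogeneous linear system in $(A,B)$.

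A non-trivial solution exists precisely when the determinant vanishes. After a short calculation this yields, in the symmetric case,
\[
\eps_\me^{-1}\tan(\lambda\phi/2)+\eps_\di^{-1}\tan(\lambda(\pi-\phi/2))=0,
\]
and in the skew-symmetric case,
\[
\eps_\me^{-1}\tan(\lambda(\pi-\phi/2))+\eps_\di^{-1}\tan(\lambda\phi/2)=0.
\]
Multiplying by $\eps_\di$ and using the key identity $b\phi/2=\pi-\phi/2$ (which is the whole point of the definition $b=(2\pi-\phi)/\phi$), these become $\kappa_\eps^{-1}\tan(\lambda\phi/2)+\tan(b\lambda\phi/2)=0$ and $\kappa_\eps\tan(\lambda\phi/2)+\tan(b\lambda\phi/2)=0$ respectively.

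Finally, I would convert to hyperbolic functions via $\tan(z)=-i\tanh(iz)$: setting $z=\lambda\phi/2$ and factoring out $-i$ gives exactly $f^-(i\lambda\phi/2)=0$ and $f^+(i\lambda\phi/2)=0$. The main subtlety — and the only place where one has to be slightly careful — is the division by $\cos(\lambda\phi/2)\cos(\lambda(\pi-\phi/2))$ used to pass from the determinant to the tangent equation; one must verify that the exceptional values where one of these cosines vanishes are genuinely captured by the zeros of $f^\pm(i\lambda\phi/2)$ once one clears the denominators in $\tanh=\sinh/\cosh$, so that no singular exponent is lost. This bookkeeping is direct but is the one step that deserves attention rather than routine algebra.
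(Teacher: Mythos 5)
Your proposal is correct and follows essentially the same route as the paper's own proof in the annex: separation of variables on the two sectors with the symmetric/skew-symmetric ans\"atze $A\cos(\lambda\theta)$, $B\cos(\lambda(\theta-\pi))$ (resp.\ sines), the vanishing of the $2\times2$ transmission determinant, and the identity $\tan(z)=-i\tanh(iz)$ together with $b\phi/2=\pi-\phi/2$. The only difference is that you flag the bookkeeping at the zeros of the cosines (and the trivial slip of saying ``multiply by $\eps_\di$'' where $\eps_\me$ is meant in the skew-symmetric case), which the paper's proof passes over silently; this does not affect correctness.
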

 All singularities do not contribute to the solution of a problem such as \eqref{eq:startingpb}. For instance, for $\kappa_{\eps}\notin I_c $, the solution of \eqref{eq:startingpb} is locally in $H^1$, so that only singularities which are in $H^1$ near the corner should be considered. For $\lambda = 0 $, $A \ln r +B $ is locally in $H^1 $ if and only if $A=0$. For $\lambda $ such that $\Re e \, \lambda >0$, $s(r,\theta) = r^\lambda \Phi(\theta) $ is locally in $H^1 $ while $s(r,\theta) = r^{-\lambda} \Phi(\theta) $ is not. The singularities associated with singular exponents $\lambda$ satisfying $\lambda\ne0$ and $\Re e\,\lambda=0$ (located at the limit between the $H^1$ zone and the non $H^1$ zone) play a special role for Problem \eqref{eq:startingpb}. We will have to take them into account in the functional framework even though they do not belong to $H^1$ (see (\ref{petitCalcul})). In the next paragraph, we focus our attention on these particular singularities.
 
 \subsection{Oscillating singularities}
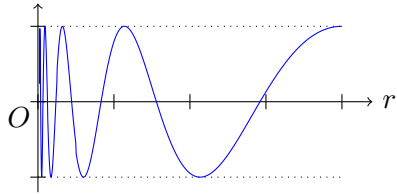
\begin{figure}[h!]
\centering
\begin{tikzpicture}
\draw[->] (0,0) -- (4.4,0) node [right] {$r$};
\draw (-0.25,-0.2) node {$O$};
\draw[dotted] (0,1) -- (4,1);
\draw[dotted] (0,-1) -- (4,-1);
\draw[samples=800,domain=0.02:4,blue] plot(\x,{cos((deg(ln(\x*0.25)*5)))});
\draw[->] (0,-1.2) -- (0,1.3);
\draw (-0.3,1);
\draw (0.3,-1);
\foreach \x in {0,...,4} \draw (\x,0.1) -- (\x,-0.1);
\foreach \y in {-1,...,1} \draw (-0.1,\y) -- (0.1,\y);
\end{tikzpicture}
\caption{Behaviour of the real part of the radial component of the singularity $r^{\lambda}\Phi(\theta)$, for $\lambda\in\R i\setminus\{0\}$, in a neighbourhood of $O$. To understand these oscillations, observe that $\Re e\,r^{i\eta}=\cos(\eta\ln r)$ for $\eta\in\R^{\ast}$.\label{dessin propagative singu coupe 1D}}
\end{figure}

Assume that $\Lambda \cap i \R $ contains some $\lambda \neq 0$. The singularities $r^{\pm\lambda} \Phi(\theta)$ have a curious oscillating behaviour at the origin (see Figures \ref{dessin propagative singu coupe 1D}, \ref{img:black-hole_waves}) and do not belong to $H^1 $. Indeed, for $\lambda =i \eta$, $ \eta \in \eR^\ast $, we obtain
\begin{equation}\label{petitCalcul}
\lim_{\delta \rightarrow 0^+}\ \int_{-\pi}^{\pi}\int_{\delta}^{\rho}\vert \partial_r (r^{i\eta} \Phi(\theta)) \vert^2 \, rdr d\theta = \lim_{\delta \rightarrow 0}\  \eta^2 \dsp \,  \int_{-\pi}^{\pi} \vert \Phi(\theta) \vert^2 \, d\theta  \, \int_{\delta}^{\rho}  \, \frac{dr}{r}  = +\infty.
\end{equation}
Going back to Maxwell's equations, this means that the electric field $\boldsymbol{E}$ is such that $\boldsymbol{E} \not \in (L^2_{\mrm{loc}}(\R^3))^3$, so that the energy is infinite at the corner. Now, we prove that such singularities exist. 
\begin{lemma}\label{lem:exist_ilambda}
Define $I_c = [-b_\Sigma ; -1/b_\Sigma] $ with $b_\Sigma = \max \left( \frac{2\pi - \phi}{\phi},\frac{ \phi}{2\pi -\phi} \right) $. The following array describes the set $\Lambda \cap i \eR$ with respect to $\kappa_{\eps}$, $\phi$:
\[
\renewcommand{\arraystretch}{1.5}
\begin{array}{>{\centering}m{1.7cm}|c|c|c|}
\cline{2-4} 
& \kappa_\eps \not \in I_c & \kappa_\eps \in (-b_\Sigma; -1 ) & \kappa_\eps \in (-1 ; -1/b_\Sigma ) \\ \hline 
\multicolumn{1}{|c|}{0<\phi<\pi} & \Lambda \cap i \eR = \lbrace 0 \rbrace\hspace{-0.2cm}~ &
\hspace{-0.2cm}\begin{array}{l}
\Lambda \cap i \eR = \lbrace \pm i \eta\rbrace,\,\mbox{ for some } \eta >0\\[-1pt]
\mbox{ and}\quad\bigg\{\begin{array}{ccl}
\Ls \cap i \eR &=& \{0\} \\[-4pt]
\Lss \cap i \eR &=& \lbrace \pm i \eta\rbrace
\end{array}
\end{array} \hspace{-0.35cm}~& \hspace{-0.15cm}\begin{array}{l}
\Lambda \cap i \eR = \lbrace \pm i \eta\rbrace,\,\mbox{ for some } \eta >0\\[-1pt]
\mbox{ and}\quad\bigg\{\begin{array}{ccl}
\Ls \cap i \eR &=& \lbrace \pm i \eta\rbrace \\[-4pt]
\Lss \cap i \eR &=& \{0\}
\end{array}
\end{array}\hspace{-0.4cm}~ \\[6pt] \hline
\multicolumn{1}{|c|}{\pi<\phi<2\pi} & \Lambda \cap i \eR = \lbrace 0 \rbrace\hspace{-0.2cm}~ &\hspace{-0.2cm}
\begin{array}{l}
\Lambda \cap i \eR = \lbrace \pm i \eta\rbrace,\,\mbox{ for some } \eta >0\\[-1pt]
\mbox{ and}\quad\bigg\{\begin{array}{ccl}
\Ls \cap i \eR &=& \lbrace \pm i \eta\rbrace \\[-4pt]
\Lss \cap i \eR &=& \{0\}
\end{array}
\end{array} \hspace{-0.35cm}~&\hspace{-0.15cm} \begin{array}{l}
\Lambda \cap i \eR = \lbrace \pm i \eta\rbrace,\,\mbox{ for some } \eta >0\\[-1pt]
\mbox{ and}\quad\bigg\{\begin{array}{ccl}
\Ls \cap i \eR &=& \{0\} \\[-4pt]
\Lss \cap i \eR &=& \lbrace \pm i \eta\rbrace
\end{array}
\end{array}\hspace{-0.4cm}~ \\[6pt] \hline
\end{array}
\]
\end{lemma}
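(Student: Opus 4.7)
The strategy is to use the explicit characterization from Proposition~\ref{prop:fonction_zeros}: $\lambda\in\Ls$ iff $f^-(i\lambda\phi/2)=0$ and $\lambda\in\Lss$ iff $f^+(i\lambda\phi/2)=0$. Setting $\lambda=i\eta$ with $\eta\in\eR$, the argument $i\lambda\phi/2=-\eta\phi/2$ is real. Since $\tanh$ is odd, $f^\pm$ is odd, and after the substitution $t=\eta\phi/2$ the vanishing condition becomes
$$\kappa_\eps^{\pm 1}\tanh(t)+\tanh(bt)=0,\qquad t\in\eR.$$
The trivial solution $t=0$ gives back $\lambda=0\in\Lambda$. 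For $t\ne 0$, both $\tanh(t)$ and $\tanh(bt)$ are nonzero of the same sign (as $b>0$), so the equation is equivalent to $g(t)=-\kappa_\eps^{\pm 1}$ with $g(t):=\tanh(bt)/\tanh(t)>0$. The function $g$ is even, so it suffices to discuss $t>0$.

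The next step is to describe $g$ on $(0,+\infty)$. A Taylor expansion gives $g(0^+)=b$, while $\lim_{t\to+\infty}g(t)=1$. Monotonicity follows from
$$(\log g)'(t)=\frac{2b}{\sinh(2bt)}-\frac{2}{\sinh(2t)},$$
together with the elementary fact that $u\mapsto\sinh(u)/u$ is strictly increasing on $(0,+\infty)$: this gives $(\log g)'<0$ when $b>1$ and $(\log g)'>0$ when $b<1$. Hence $g:(0,+\infty)\to(\min(1,b),\max(1,b))$ is a strictly monotone homeomorphism, and the equation $g(t)=c$ admits a unique positive solution if and only if $c$ lies in this open interval. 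Since $f^\pm(-t_0)=-f^\pm(t_0)$, any such positive root $t_0$ yields the pair $\pm i\eta$ with $\eta=2t_0/\phi>0$.

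The last step is purely algebraic: translate the condition $-\kappa_\eps^{\pm 1}\in(\min(1,b),\max(1,b))$ into a condition on $\kappa_\eps$ relative to $I_c=[-b_\Sigma;-1/b_\Sigma]$. When $0<\phi<\pi$, one has $b_\Sigma=b>1$, so $-\kappa_\eps\in(1,b)$ iff $\kappa_\eps\in(-b_\Sigma,-1)$ (producing a pair $\pm i\eta$ in $\Lss$ via $f^+$), while $-\kappa_\eps^{-1}\in(1,b)$ iff $\kappa_\eps\in(-1,-1/b_\Sigma)$ (producing a pair $\pm i\eta$ in $\Ls$ via $f^-$); the two conditions are mutually exclusive. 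When $\pi<\phi<2\pi$, one has $b_\Sigma=1/b>1$ and the same algebra exchanges the roles of $f^+$ and $f^-$, which is exactly the second line of the table. Finally, when $\kappa_\eps\notin I_c$ neither $-\kappa_\eps$ nor $-\kappa_\eps^{-1}$ belongs to $(\min(1,b),\max(1,b))$, so no nonzero real $t$ solves the equation and $\Lambda\cap i\eR$ reduces to $\{0\}$.

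The only genuine technical point is the monotonicity of $g$, and even this boils down to the convexity inequality for $\sinh$; everything else is bookkeeping between the parameters $b$, $\kappa_\eps$ and the critical interval $I_c$.
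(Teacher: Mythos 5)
Your proof is correct, and while it starts from the same place as the paper's (Proposition~\ref{prop:fonction_zeros}, oddness of $f^\pm$, reduction to real zeros on $(0;+\infty)$), the mechanism you use to count those zeros is genuinely different. The paper analyzes $f^+$ directly: it checks the signs of $(f^+)'(0)=\kappa_\eps+b$ and of $\lim_{t\to+\infty}f^+(t)=\kappa_\eps+1$ to get existence, and then asserts (without detail) that $(f^+)'$ changes sign exactly once to get uniqueness; the case $\pi<\phi<2\pi$ is then handled by a separate symmetry argument swapping $\Om_{\text{m}}$ and $\Om_{\text{d}}$, i.e.\ replacing $\kappa_\eps$ by $1/\kappa_\eps$. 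You instead divide by $\tanh t$ and reduce everything to the single statement that $g(t)=\tanh(bt)/\tanh(t)$ is a strictly monotone bijection from $(0;+\infty)$ onto $(\min(1,b);\max(1,b))$, proved via $(\log g)'(t)=2b/\sinh(2bt)-2/\sinh(2t)$ and the monotonicity of $u\mapsto\sinh(u)/u$. This buys you three things at once: existence, uniqueness, and non-existence outside $I_c$ all follow from the range of $g$; the sub-case $\pi<\phi<2\pi$ needs no separate reduction since $b<1$ is handled symmetrically; and the step the paper leaves vague (the single sign change of $(f^+)'$) is replaced by a clean convexity-type inequality. The remaining bookkeeping translating $-\kappa_\eps^{\pm1}\in(\min(1,b);\max(1,b))$ into the columns of the table is carried out correctly in both lines.
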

\begin{proof}
 According to Proposition \ref{prop:fonction_zeros}, the singular exponents are given by the zeros of the functions $f^\pm $. As the functions $f^\pm $ are odd, it is sufficient to study their zeros on $(0; +\infty)$.\\[2pt]
$\star$ First assume that $0<\phi<\pi$. Then, we have $b = \dsp ({2\pi -\phi})/{\phi} >1 $ and $b_\Sigma = b $. We can check that $f^\pm $ do not vanish when $ \ke \not \in [-b_\Sigma; -1/b_\Sigma]$:\\
- if $\ke < -b_\Sigma $, then on $(0;+\infty) $, $f^-(t) > (  1+ \kappa_\eps^{-1} )\tanh(t)>0 $ and $f^+(t)< \tanh(b t) -b \tanh(t) <0$;\\
- if $\ke >-1/ b_\Sigma $ then on $(0;+\infty) $, $f^-(t) < \tanh(b t) -b \tanh(t) <0 $ and $f^+(t)>  (  1+ \kappa_\eps)\tanh(t)>0$.\\[2pt]
Now, we wish to prove that \\[5pt]
$\begin{aligned}
&\kappa_\eps \in (-b_\Sigma; -1 ) \quad &\Longrightarrow \quad \exists ! \, \eta >0 \mbox{ such that } f^+\left(\eta {\phi}/{2} \right) = 0 \mbox{ and } f^-(t) \neq 0, \, \forall t \in (0;+\infty),\\
&\kappa_\eps \in (-1; -1/b_\Sigma ) \quad &\Longrightarrow \quad \exists ! \, \eta >0 \mbox{ such that } f^-\left(\eta {\phi}/{2} \right) = 0 \mbox{ and } f^+(t) \neq 0, \, \forall t \in (0;+\infty).
\end{aligned}$\\[5pt]
$\bullet$ Case $\kappa_\eps\in(-b_\Sigma;-1)$. With the same arguments as before we have $f^{-}>0$ on $(0;+\infty)$. On the other hand, a careful analysis of the monotony shows that $f^{+}$ vanishes exactly once on $(0;+\infty)$ if and only if $\kappa_\eps\in(-b_\Sigma;-1)$. More precisely, one checks that $(f^+)'(0)=\ke+b_\Sigma>0 $ (and $f^+(0)=0$), while $\lim_{t \rightarrow+\infty}f^+(t) =\ke+1<0 $. This proves that $f^+$ has at least one zero in $(0;+\infty)$. Then one proves that the derivative of $f^+ $ changes sign once and only once on $(0;+\infty) $ to conclude. \\[2pt]
$\bullet$ Case $\kappa_\eps\in(-1:-1/b_\Sigma)$. With analogous  arguments we obtain $f^{+}>0$ on $(0;+\infty)$ and we establish that $f^{-}$ has exactly one zero on $(0;+\infty)$ if and only if $\kappa_\eps\in(-1:-1/b_\Sigma)$. \\[5pt]
$\star$ To consider the situation $\pi <\phi<2\pi$, it is sufficient to note that the singularities of the operators $\div(\eps^{-1}\nabla\cdot)$ and $-\div(\eps^{-1}\nabla\cdot)$ are the same and to use the results of the case $0<\phi<\pi$ with $\kappa_{\eps}$ replaced by $1/\kappa_{\eps}$. Indeed, with this multiplication by $-1$, the roles of $\Om_{\mrm{m}}$ and $\Om_{\mrm{d}}$ are exchanged.
\end{proof}
\begin{figure}[!h]
\centering
\includegraphics[width=0.27\columnwidth]{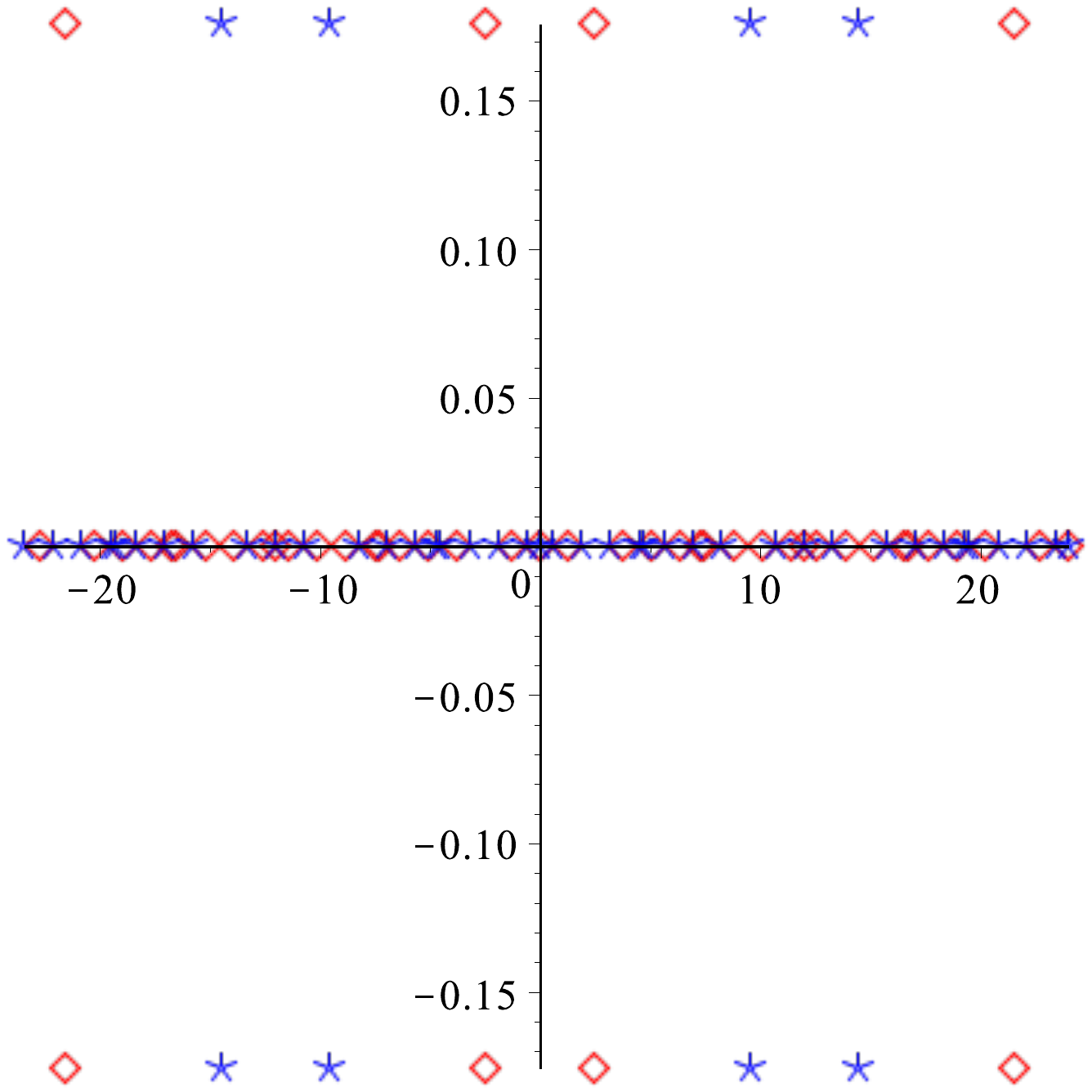}\quad
\includegraphics[width=0.27\columnwidth]{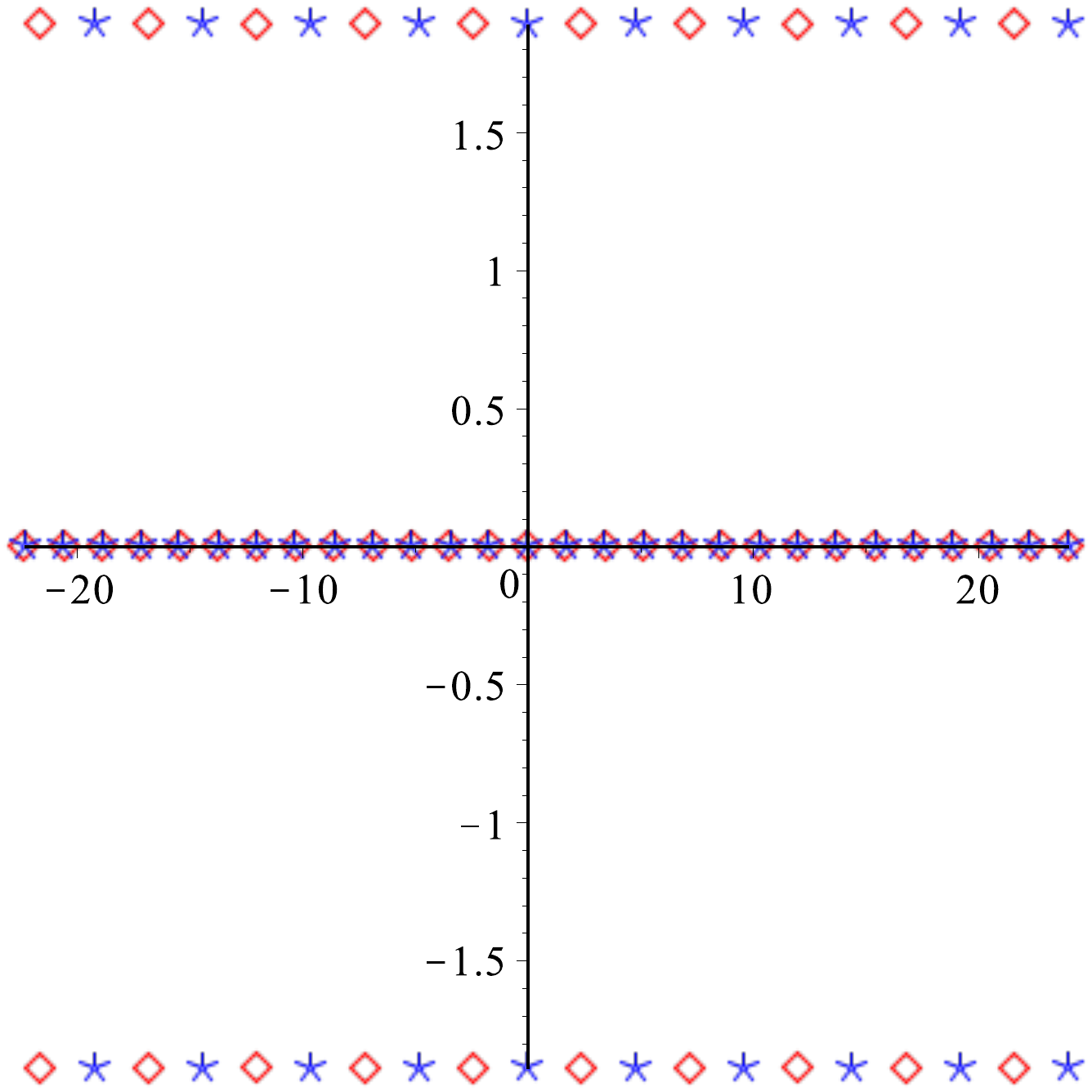}\quad
\includegraphics[width=0.27\columnwidth]{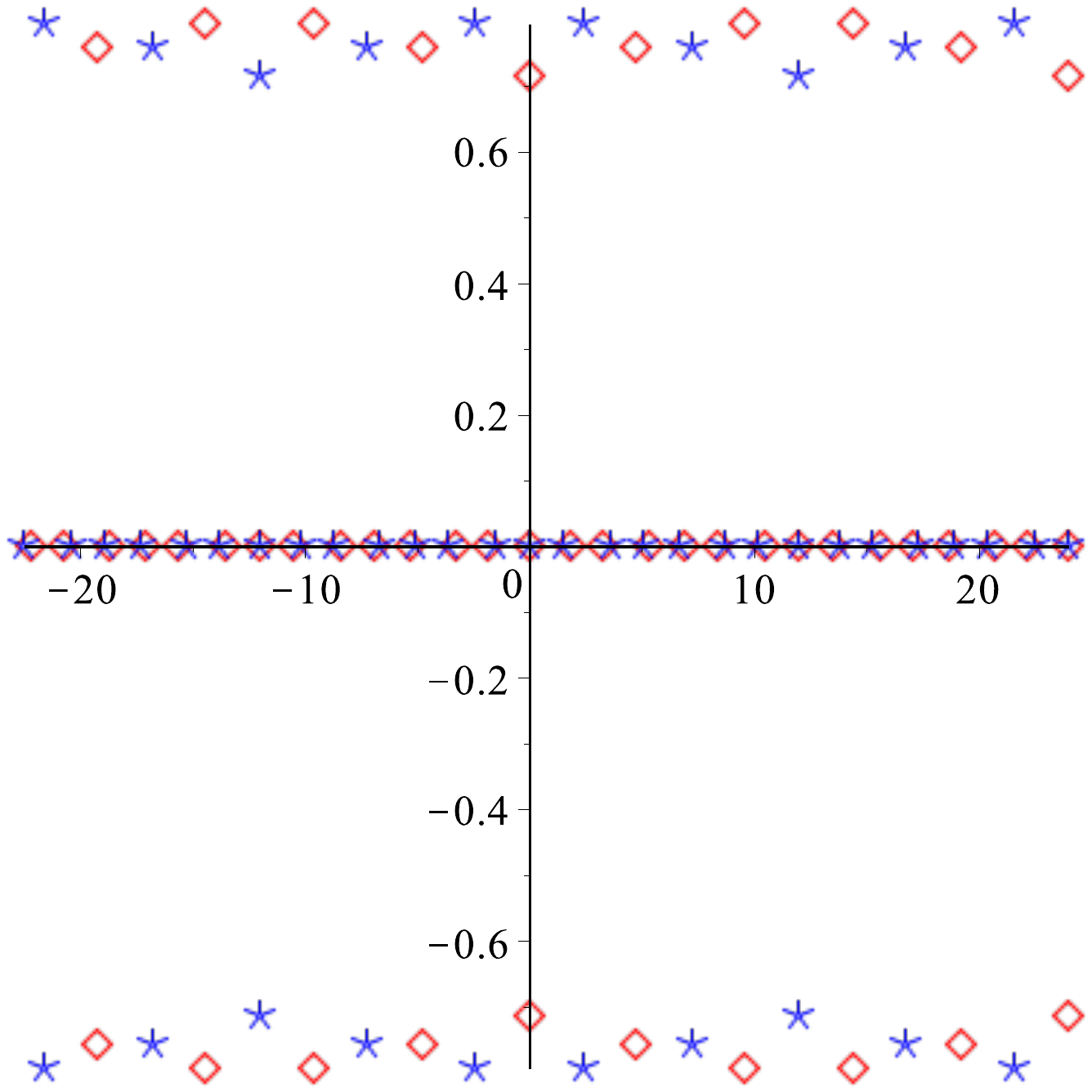}
\caption{Set of singular exponents $\Lambda$ for $\phi = 5\pi/12 $ (in this case $I_c =[-3.8; -0.26315]$). The singular exponents $\lambda \in \Lss $ are represented with asterisks while the singular exponents $\lambda \in \Ls $ are represented with diamonds. Left: $ \ke=-18.684$. Middle: $ \ke=-1.1871$. Right: $ \ke=-0.4641$.\label{img:spectre_compa}}
\end{figure}
In Figure \ref{img:spectre_compa}, we display the set of singular exponents $\Lambda$ for contrasts outside and inside the critical interval $I_c$ for a given angle $\phi$. The results are in accordance with Lemma \ref{lem:exist_ilambda} (observe also that $\Lambda $ may contain complex eigenvalues even if $\ke\notin I_c$). In the following, for a contrast $\kappa_{\eps}\in (-b_{\Sigma};-1/b_{\Sigma})\setminus\{-1\}$, we shall often refer to $s^{\pm}$ where 
\[
s^{\pm}(r,\theta)=r^{\pm i\eta}\Phi(\theta).
\]
We recall that $\eta$ is chosen positive. With the results of Proposition \ref{prop:fonction_zeros} and Lemma \ref{lem:exist_ilambda} one can check that the eigenfunctions $\Phi $ are defined as follows.
\begin{equation}\label{PhiSkewSym}
\begin{array}{l|l}
\hspace{-0.9cm}\mbox{If }\Lss \cap i \eR\ne\{0\}, & \dsp\Phi(\theta) = \frac{\sinh(\eta\theta)}{\sinh(\eta\phi/2)}\ \mbox{on}\ [0;\phi/2];\  \Phi(\theta) = \frac{\sinh(\eta(\pi-\theta))}{\sinh(\eta(\pi-\phi/2))}\ \mbox{on}\ [\phi/2;\pi];\\[10pt]
 & \Phi(\theta)=-\Phi(-\theta)\ \mbox{on}\quad[-\pi;0].
\end{array}
\end{equation}
\begin{equation}\label{PhiSym}
\begin{array}{l|l}
\hspace{-0.9cm}\mbox{If }\Ls \cap i \eR\ne\{0\}, & \dsp\Phi(\theta) = \frac{\cosh(\eta\theta)}{\cosh(\eta\phi/2)}\ 
\mbox{on}\ [0;\phi/2];\  \Phi(\theta) = \frac{\cosh(\eta(\pi-\theta))}{\cosh(\eta(\pi-\phi/2))}\ \mbox{on}\ [\phi/2;\pi];\\[10pt]
& \Phi(\theta)=\Phi(-\theta)\quad\mbox{on}\quad[-\pi;0].
\end{array}
\end{equation}

\begin{figure}[!h]
\centering
\includegraphics[width=0.25\columnwidth]{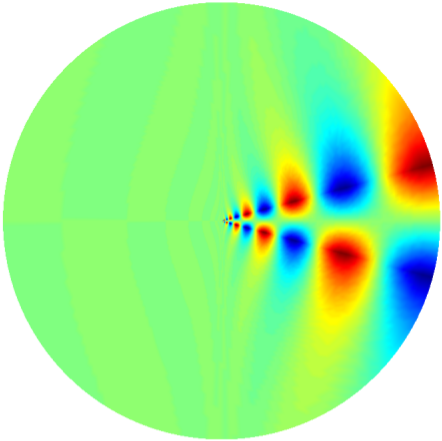} \qquad
\quad\includegraphics[width=0.25\columnwidth]{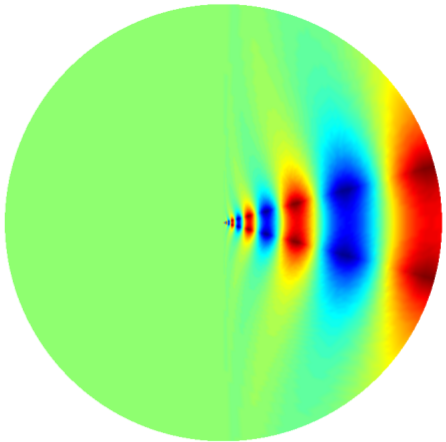} 
\caption{Real part of the skew-symmetric (left) and symmetric (right) oscillating singularity near the corner $\bfc$.\label{img:black-hole_waves}}
\end{figure}

Going back to the time-domain and multiplying the oscillating singularities by the harmonic term $e^{-i\om t}$ leads us to consider functions which behave like 
\begin{equation}\label{WavesBidule}
s^{\pm}(r,\theta)e^{-i\omega t} = e^{i(\pm\eta\ln r-\omega t)}\Phi(\theta)
\end{equation}
near the corner $\bfc$. Let us compute the phase velocity of these waves. A point $M$ of phase $i(\pm\eta\ln r-\omega t)$ will be located at $r+dr$ at $t+dt$ with $\pm\eta\ln(r+dr)-\omega (t+dt)=\pm\eta\ln r-\omega t$. Since $\ln (r+dr)  = \ln r + dr/r + o(1/r)$ for $dr$ small, one finds the phase velocity $dr/dt = \pm \omega r/\eta $ for the waves (\ref{WavesBidule}). Note that it tends to zero when approaching the origin. The wave $s^{-}(r,\theta)e^{-i\omega t}$ seems to propagate to the corner but never reaches it. This is the reason why in the following, it will be referred to as ``black-hole wave''. By extension, we will use the same denomination for the other wave $s^{+} (r,\theta)e^{-i\omega t}$ which seems to come from $\bfc$. Finally, we point out that from time to time in this paper, the oscillating singularities will be called ``black-hole singularities''.

\subsection{Selecting the outgoing solution I: energy trapped at the corner} \label{sec_outgoing}
For a contrast $\kappa_\eps \in I_c\setminus \lbrace -1 \rbrace $, looking for a solution of the scattering problem in $H^1_{\mrm{loc}}$ (\textit{i.e.} with a local finite electromagnetic energy) would lead to exclude a behaviour at a corner like the one of the oscillating singularities $s^\pm (r,\theta)$. Nonetheless allowing such singular behaviour is necessary to obtain existence (and uniqueness) of a solution $u$ to Problem \eqref{eq:startingpb}. This is exactly as in waveguides problems for which we must look for  solutions which decompose on propagative modes which are not in $H^1$. Actually, there is a strong analogy between the problem considered here and waveguides problems which is described in \S\ref{paragraphAnalogyWaveguide}. Thus, we are led to consider $u$ which decomposes, in a neighbourhood of $\bfc$, as 
\begin{equation}\label{firstExpansion}
u= a+b_+ s^{+} +b_- s^{-}+\tilde{u},
\end{equation}
where $\tilde{u}$ is a smooth function and $a,\,b_{\pm}$ are complex constants. Roughly speaking $\tilde{u}$ is smooth means that it is a superposition of singularities with singular exponents  $\lambda $ such that $\Re e\,\lambda >0 $ (see \S\ref{proof_lemma_energy_DIC}). We recall that $0$ belongs to $\Lambda$ for all $\kappa_{\eps}<0$, that the associated singularities are $1$ and $\ln r$, and that only the constant is locally in $H^1$. In order to get uniqueness of the solution for \eqref{eq:Sommerfeld_exacte}, as shown in \cite{BoChCl13}, a relation on $b_{\pm}$ has to be enforced. A priori, it is not obvious to decide which condition to impose that will give the ``physical'' solution because the singularities $s^{\pm}$ have very similar behaviours at $\bfc$. In particular, due to the change of sign of the permittivity $\eps$, considerations based on phase velocity are not sufficient. To identify the relevant condition, we study in this paragraph the energy carried by $s^{\pm}$. Let us consider a function $ u$ of the form \eqref{firstExpansion} which satisfies the equation $\div (\eps \inv \grad u)+ k^2_0 \mu u =0 $ in the vicinity of the corner $ \bfc$. Proceeding like at the end of \S \ref{ssec_bounded_dom}, one can easily verify that the quantity 
\begin{equation}\label{def_J}
J := \dsp \Im m \left( \int_{\partial D_\rho} \eps\inv \partial_r u \overline{u} \, d \sigma\right) 
\end{equation} defined for $\rho $ small enough, is independent of $\rho $. It represents the energy flux through $\partial D_\rho $ coming from the corner. Plugging \eqref{firstExpansion} in the left-hand side of (\ref{def_J}) yields
\begin{equation}\label{J}
J= \eta (\vert b_+ \vert^2 - \vert b_- \vert^2) \dsp \int_{-\pi}^{\pi} \eps\inv \Phi^2 \, d\theta.
\end{equation}
Indeed, on the one hand, using that $\Phi $, given by \eqref{PhiSkewSym} or \eqref{PhiSym}, is real-valued, one can check that
\begin{equation}\label{CalculPlusMoins}
 \dsp\int_{\partial D_\rho } \eps^{-1}\diff{s^\pm}{r}\,\overline{s^\pm}\,d\sigma = \pm   i\eta \dsp\int_{-\pi}^{\pi}\eps^{-1} \Phi^2\,d\theta.
\end{equation}
On the other hand, one can prove that all the other cross-terms tend to $0$ as $ \rho \rightarrow 0$ (proceed as in \S\ref{proof_lemma_energy_DIC}). Then identity \eqref{J} follows by noting that $\overline{s^+} =s^- $. The sign of the integral appearing in (\ref{J}), which is not obvious because of the presence of the parameter $\eps$, plays an important role to compute energy balances. 
Explicit calculations detailed in the Annex (see Lemma \ref{ResultatCalcul}) show that
\begin{equation}\label{relationSpectral}
\int_{-\pi}^{\pi}\eps^{-1} \Phi ^2\,d\theta>0\quad\mbox{ if}\ \kappa_\eps \in (-b_\Sigma; -1 )\quad \mbox{ and }\quad \int_{-\pi}^{\pi}\eps^{-1} \Phi ^2\,d\theta<0\quad\mbox{ if}\ \kappa_\eps \in (-1;-1/b_\Sigma).
\end{equation}
We see that the sign of the integral depends only on the contrast of the physical parameters of the two materials. If $\eps^{-1}_\di>|\eps_\me|^{-1}$, then the integral is positive, and vice versa.\\
\newline
Now, consider for instance the case $\kappa_\eps \in (-b_\Sigma; -1 )$. Since by definition $\eta $ is positive, we observe with \eqref{J} that the singularity $s^+ $ adds a positive contribution to the energy flux $J$. It means that $s^+$ carries some energy produced by the corner. We say that $s^+$ is the ``incoming'' singularity ($s^\inc=s^+ $) because it brings energy into the system. On the contrary, $s^-$ adds a negative contribution to $J$: it carries some energy absorbed by the corner. We say that $s^-$ is the ``outgoing'' singularity ($s^\out=s^- $). When $\kappa_{\eps}\in(-1;-1/b_{\Sigma})$, according to (\ref{relationSpectral}), we take $s^\out=s^+$ and $s^\inc=s^-$. The results are summarized in Table \ref{tableSummary} below. To conclude, in the following, we impose that the solution of Problem \eqref{eq:startingpb} decomposes  only on the outgoing singularity $s^\out $ and not on $s^\inc$ because $s^\inc $ carries some energy produced by the corner which is not physical.

\begin{remark}
The terminology ``incoming/outgoing'', inspired by the scattering theory, is mainly related to the point of view developed in section \ref{sec:PMLs}, where $s^\out $ (resp. $s^\inc $) corresponds to an outgoing (resp. incoming) propagative mode in a waveguide. 
\end{remark}

\begin{table}[h!]
\centering
\renewcommand{\arraystretch}{1.3}
\begin{tabular}{c|c|c|}
\cline{2-3}
 \, & $\kappa_\eps \in (-b_\Sigma; -1 )$  & $\kappa_\eps \in (-1;-1/b_\Sigma)$  \\
\cline{2-3}
 \, & $s^\out(r,\theta)= s^-(r,\theta) = r^{-i\eta}\Phi(\theta)$ & $s^\out(r,\theta)=s^+(r,\theta) = r^{+i\eta}\Phi(\theta)$ \\
 \hline
 \multicolumn{1}{|c|}{{$0 <\phi < \pi$}} 
& $\Phi$ given by (\ref{PhiSkewSym}) skew-symmetric & $\Phi$ given by (\ref{PhiSym}) symmetric\\
 \hline
\multicolumn{1}{|c|}{{$\pi <\phi < 2\pi$ }}
&  $\Phi$ given by (\ref{PhiSym}) symmetric & $\Phi$ given by (\ref{PhiSkewSym}) skew-symmetric \\
 \hline 
\end{tabular}
\caption{Features of the outgoing singularity with respect to the configuration. 
\label{tableSummary}}
\end{table}
\begin{remark}\label{rmk:Vphi}
Note that when $\kappa_\eps \in (-1;-1/b_\Sigma)$, the wave $s^\out e^{-i\omega t}$ (see (\ref{WavesBidule})) has a positive phase velocity and seems to come from the corner. However, $s^\out$ propagates energy towards the corner. We stress that we select the physical solution according to the group velocity and not according to the phase velocity.
\end{remark}

Let us briefly present another approach, which has been used in \cite{BoChCl13} (see also \cite{Nguy15} in a slightly different context), to define the ``physical'' singularity. We emphasize that it leads to select the same solution.

\subsection{Selecting the outgoing solution II: limiting absorption principle}\label{subsectionOutgoing}

We recall that the original Drude's model \eqref{lossyDrudeModel} includes a small parameter $\gamma $ which takes into account classical dissipation Joule effects. We point out that we choose $\gamma\ge 0$ so that, with the convention of a harmonic term equal to $e^{-i\om t}$, energy is indeed lost by the structure (the alternative convention $e^{i\om t}$ leads to take $\gamma\le 0$ in order to model dissipation). We denote by $\eps^\gamma_{\mrm{m}}$ the permittivity obtained with this model and we define $\eps^\gamma$ the function such that $\eps^\gamma=\eps_{\mrm{d}}$ in $\Om_{\mrm{d}}$, $\eps^\gamma=\eps^\gamma_{\mrm{m}}$ in $\Om_{\mrm{m}}$. The smallness of $\gamma $ compared to the considered range of frequencies has led us to neglect it in the analysis and this is the reason of the difficulties we have encountered. Indeed, when $\gamma >0 $, $\kappa_{\eps^{\gamma}}:=\eps^\gamma_{\mrm{m}}/\eps_{\mrm{d}}\not \in \eR $ and one can easily check that the functions $f^\pm$ defined in Proposition \ref{prop:fonction_zeros} with $\kappa_{\eps}$ replaced by $\kappa_{\eps^{\gamma}}$ do not vanish on $ (0; + \infty)$. In other words, purely oscillating singularities do not occur with dissipation. More mathematically, when $\eps$ is changed to $\eps^{\gamma}$, the new sesquilinear form associated with Problem (\ref{eq:FV_Sommerfeld_exacte}) becomes coercive in $H^1(D_R)$. Therefore, the dissipative problem always admits a unique solution, denoted $u^{\gamma}$, in this space. The function $u^{\gamma}$ decomposes near the corner as 
\[
u^{\gamma} = a^{\gamma}+b^{\gamma} s^{\gamma}+\tilde{u}^{\gamma},
\]
where $a^{\gamma}$, $b^{\gamma}$ are constants, $\tilde{u}^{\gamma}$ is a smooth function and $s^{\gamma}(r,\theta)=r^{\lambda^\gamma}\Phi^{\gamma}(\theta)$. Here, $\lambda^\gamma$ is the singular exponent of smallest positive real part of $\Lambda^{\gamma}$, the set of values of $\lambda$ such that (\ref{pb_phi}), with $\eps$ replaced by $\eps^{\gamma}$, has a non zero solution. The following result confirms the relevance of choosing the outgoing singularity $s^{\out}$ according to Table \ref{tableSummary}. 

\begin{proposition}\label{propositionLimitingAbsorption}
Assume that $\kappa_{\eps}\in (-b_{\Sigma};-1/b_{\Sigma})\setminus\{-1\}$. Then $s^{\out}=\lim_{\gamma\to 0}s^{\gamma}$, where $s^\out $ is defined according to Table \ref{tableSummary}.
\end{proposition}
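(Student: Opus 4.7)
The plan is to treat $\lambda^{\gamma}$ and $\Phi^{\gamma}$ as an analytic perturbation, in the dissipation parameter $\gamma$, of a \emph{simple} root of the characteristic equation of the lossless problem, and then to identify in which direction the eigenvalue migrates off the imaginary axis when $\gamma$ becomes positive.

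First I would set up the perturbation. From the Drude model \eqref{lossyDrudeModel} one gets $(\eps^{\gamma})^{-1} = \eps^{-1} - i\gamma\,\dot{\eps}\,\eps^{-2} + O(\gamma^{2})$, where $\dot{\eps} = \eps_{0}\,\omega_{p}^{2}/\omega^{3} > 0$ is supported in $\Om_{\mrm{m}}$. Thus the angular eigenproblem \eqref{pb_phi} depends analytically on $\gamma$ near $0$. By Lemma \ref{lem:exist_ilambda} together with the monotonicity analysis carried out in its proof (which shows in particular that the relevant function $f^{\pm}$ has a unique zero on $(0;+\infty)$ at which its derivative is nonzero), $\pm i\eta$ are simple roots of the unperturbed characteristic equation. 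Analytic perturbation theory (or the holomorphic implicit function theorem applied to $f^{\pm}$) then provides two holomorphic branches $\lambda^{\gamma}_{\pm} = \pm i\eta + \gamma\,\alpha_{\pm} + O(\gamma^{2})$ which exhaust $\Lambda^{\gamma}$ in any fixed small neighbourhood of the imaginary axis for $\gamma$ small enough; by definition, $\lambda^{\gamma}$ is whichever of $\lambda^{\gamma}_{\pm}$ has strictly positive real part.

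The key step is to compute $\Re e\,\alpha_{\pm}$. Write the problem as $A^{\gamma}\Phi^{\gamma} = (\lambda^{\gamma})^{2}\,B^{\gamma}\Phi^{\gamma}$ with $A^{\gamma}u := -\partial_{\theta}((\eps^{\gamma})^{-1}\partial_{\theta}u)$, $B^{\gamma}u := (\eps^{\gamma})^{-1}u$, and expand $(\lambda^{\gamma})^{2} = -\eta^{2} + \gamma\,\mu_{1} + O(\gamma^{2})$, $\Phi^{\gamma} = \Phi + \gamma\,\Phi_{1} + O(\gamma^{2})$, where $\Phi$ is given by \eqref{PhiSkewSym}--\eqref{PhiSym} and is therefore real-valued. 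The essential trick, forced by the non-selfadjoint character of the perturbed problem, is to use the \emph{bilinear} pairing $\langle u,v\rangle := \int_{-\pi}^{\pi} u v\, d\theta$ (no complex conjugation): under periodic boundary conditions both $A^{\gamma}$ and $B^{\gamma}$ are symmetric for this pairing, so testing the order-$\gamma$ equation against $\Phi$ cancels the $\Phi_{1}$ contribution. A short integration by parts then yields
\[
\mu_{1}\,Q \;=\; -\,i\,P,\qquad P := \int_{-\pi}^{\pi}\dot{\eps}\,\eps^{-2}\bigl[(\Phi')^{2} + \eta^{2}\Phi^{2}\bigr] d\theta,\qquad Q := \int_{-\pi}^{\pi}\eps^{-1}\Phi^{2}\, d\theta.
\]
Clearly $P > 0$ (since $\dot{\eps}\,\eps^{-2} \geq 0$, $\dot{\eps} > 0$ on $(-\phi/2;\phi/2)$ and $\Phi$ is not identically zero there), while the sign of $Q$ is precisely the one given by \eqref{relationSpectral}.

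Finally, $\alpha_{+} = \mu_{1}/(2i\eta) = -P/(2\eta Q)$ and $\alpha_{-} = -\alpha_{+}$, so $\Re e\,\lambda^{\gamma}_{\pm} = \mp\gamma\,P/(2\eta Q) + O(\gamma^{2})$. When $\kappa_{\eps}\in(-b_{\Sigma};-1)$, \eqref{relationSpectral} gives $Q > 0$, so $\lambda^{\gamma} = \lambda^{\gamma}_{-} \to -i\eta$ and $s^{\gamma}\to r^{-i\eta}\Phi = s^{-}$; when $\kappa_{\eps}\in(-1;-1/b_{\Sigma})$, $Q < 0$, so $\lambda^{\gamma} = \lambda^{\gamma}_{+} \to +i\eta$ and $s^{\gamma}\to r^{+i\eta}\Phi = s^{+}$. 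In both cases the limit coincides with $s^{\out}$ as prescribed by Table \ref{tableSummary}, and the convergence of the appropriately normalized eigenfunction $\Phi^{\gamma}\to\Phi$ in $C^{0}([-\pi;\pi])$ follows from the same analytic perturbation argument. The only real obstacle is the derivation of the perturbation formula above: because the sign-changing $\eps$ rules out a Hermitian framework, one has to exploit the bilinear symmetry of $A^{\gamma},\,B^{\gamma}$, and it is essential that the unperturbed eigenfunction $\Phi$ can be chosen real --- otherwise testing against $\overline{\Phi^{\gamma}}$ would fail to eliminate $\Phi_{1}$ and leave $\mu_{1}$ undetermined.
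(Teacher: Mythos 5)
Your argument is correct and reaches the paper's conclusion by the same overall strategy: expand the two singular exponents as $\lambda^{\gamma}_{\pm}=\pm i\eta+\gamma\hat{\lambda}_{\pm}+\dots$, show that $\hat{\lambda}_{\pm}$ are real with $\hat{\lambda}_{+}=-\hat{\lambda}_{-}$ and $\hat{\lambda}_{+}\int_{-\pi}^{\pi}\eps^{-1}\Phi^{2}\,d\theta<0$, and then use (\ref{relationSpectral}) to decide which branch acquires a positive real part. Where you differ is in how the first-order coefficient is obtained: the paper's sketch invokes the implicit function theorem on the transcendental characteristic equation $f^{\pm}(i\lambda\phi/2)=0$ with $\kappa_{\eps}$ replaced by the complex $\kappa_{\eps^{\gamma}}$, whereas you differentiate the Sturm--Liouville problem (\ref{pb_phi}) itself, testing the order-$\gamma$ equation against $\Phi$ with the non-conjugated pairing so that the symmetry of $-\partial_{\theta}(\eps^{-1}\partial_{\theta}\cdot)$ kills the $\Phi_{1}$ term. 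Your route buys an explicit closed formula, $\hat{\lambda}_{+}=-P/(2\eta Q)$ with $P>0$ and $Q=\int_{-\pi}^{\pi}\eps^{-1}\Phi^{2}\,d\theta$, from which all three facts the paper merely asserts become transparent, and it makes visible why the real-valuedness of $\Phi$ and the sign result of Lemma \ref{ResultatCalcul} are exactly the ingredients needed; the price is that you must justify simplicity of the root $\pm i\eta$ (your appeal to the monotonicity in the proof of Lemma \ref{lem:exist_ilambda} does give $(f^{\pm})'\neq0$ at the unique positive zero, since that zero lies in the region where the derivative has already changed sign) and, as in the paper, you tacitly use that no other element of $\Lambda^{\gamma}$ can have a smaller positive real part for $\gamma$ small — which holds because the exponents in $\tilde{\Lambda}^{\out}$ have real parts bounded below by $\beta_{0}>0$ and the root at $0$ stays put.
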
 
Let us sketch the proof. Denote $\lambda^{\gamma}_{\pm}\in \Lambda^{\gamma}$ the singular exponent which tends to $\pm i\eta$ as $\gamma$ goes to zero. Introduce $\hat{\lambda}_{\pm}$ the first order term appearing in the Taylor expansion $\lambda^{\gamma}_{\pm}=\pm i\eta+\gamma \hat{\lambda}_{\pm}+\dots$. Using the implicit functions theorem, one can prove that $\hat{\lambda}_{\pm}$ are real valued, $\hat{\lambda}_{+}=-\hat{\lambda}_{-}$ and $\hat{\lambda}_{+}\int_{-\pi}^{\pi}\eps^{-1} |\Phi|^2\,d\theta <0$. Assume that $\kappa_{\mrm{\eps}}\in(-b_{\Sigma};-1)$.  Then, according to (\ref{relationSpectral}), we have $\int_{-\pi}^{\pi}\eps^{-1} |\Phi|^2\,d\theta >0$. Since by definition $\Re e\,\lambda^\gamma>0$, we deduce that $\lambda^\gamma$ coincides with $\lambda^\gamma_{-}$ and therefore, tends to $-i\eta$ as $\gamma\to0$. This implies $\lim_{\gamma\to 0}s^{\gamma}=s^{-}$. But Table \ref{tableSummary} ensures that $s^{-}=s^{\out}$ when $\kappa_{\mrm{\eps}}\in(-b_{\Sigma};-1)$. The case $\kappa_{\mrm{\eps}}\in(-1;-1/b_{\Sigma})$ can be handled in a similar way.\\
\newline   
In Figure \ref{img:spectre_compa_pml_dissip} (middle), we represent the set $\Lambda^{\gamma}$ for a small value of $\gamma>0$. One observes that the numerical results are in accordance with Proposition  \ref{propositionLimitingAbsorption}. 

\subsection{A well-posed formulation of the scattering problem for a contrast inside the critical interval}\label{ssec:Kondratiev}
At this point, we have provided the ingredients to obtain a well-posed formulation for the scattering Problem (\ref{eq:startingpb}) with a contrast lying in the critical interval. When $\ke \in I_c \setminus \lbrace -1 \rbrace $, we look for solutions $u$ in the sense of distributions of $\R^2$ (denoted $\mathscr{D}'(\R^2)$) which admit the expansion 
\begin{equation}\label{expansionSolution}
u = b s^\out + \tilde{u} \quad \mbox{in } \R^2, \quad \mbox{with } b \in \mathbb{C},\ \tilde{u} \in H^1_{\mrm{loc}}(\R^2),
\end{equation}
where the outgoing singularity $s^\out$ is defined according to Table \ref{tableSummary}. In particular, a solution $u$ satisfies
\[
a(u,w)=l(w),\qquad\forall w\in\mathscr{C}^{\infty}(\overline{D_R}),
\]
where $\mathscr{C}^{\infty}(\overline{D_R})=\{\varphi|_{D_R},\,\varphi\in\mathscr{C}^{\infty}_0(\R^2)\}$ and where $a(\cdot,\cdot)$, $l(\cdot)$ are  defined in (\ref{eq:FV_Sommerfeld_exacte}).
\begin{remark}
Note that $\eps^{-1}\nabla s^{\mrm{out}}$ is an element of $L^1_{\mrm{loc}}(\R^2)^2\subset\mathscr{D}'(\R^2)^2$. Therefore, $\div(\eps\inv \grad s^{\mrm{out}})$ is well-defined in the sense of distributions of $\R^2$. Moreover, one can check that $\div(\eps\inv \grad s^{\mrm{out}})=0$ in $\mathscr{D}'(D_r)$ for $r$ small enough, so that $\div(\eps\inv \grad u)=\div(\eps\inv \grad\tilde{u})$ in $\mathscr{D}'(D_r)$.
\end{remark}
Imposing the specific behaviour (\ref{expansionSolution}) for the solution is like imposing a radiation condition at the corner. As nicely written in \cite{Berr09} for another problem sharing analogous properties, this boils down to allow a ``leak'' at $\bfc$. Now, we prove the well-posedness of the problem in this setting. We start with a uniqueness result whose proof relies again on energy considerations.

\begin{lemma} \label{cor:uniqueness_Kondra}
Problem (\ref{eq:startingpb}) has at most one solution admitting decomposition \eqref{expansionSolution}.
\end{lemma}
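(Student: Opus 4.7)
By linearity it suffices to show that any solution $u$ of Problem~\eqref{eq:startingpb} with $u^{\mrm{inc}}=0$ admitting a decomposition of the form \eqref{expansionSolution} is identically zero. The plan is to combine a flux-conservation identity on the annulus $D_R\setminus\overline{D_\rho}$ with the sign information \eqref{relationSpectral} on the ``corner flux'' carried by $s^{\out}$, and then to invoke the Rellich lemma together with unique continuation exactly as in Lemma~\ref{lem:uniqueness}.

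First, fix $\rho>0$ small enough so that the corner is isolated and $u=bs^{\out}+\tilde u$ in $D_\rho$. Multiply $\div(\eps^{-1}\nabla u)+k_0^2\mu u=0$ by $\overline{u}$ and integrate over $D_\xi\setminus\overline{D_\rho}$ for any $\xi\ge R$; the term $\div(\eps^{-1}\nabla\tilde u)\,\overline{\tilde u}$ is treated by Green's formula (legitimate since $\tilde u\in H^1_{\mrm{loc}}$), while the contributions involving $s^{\out}$ are handled directly in polar coordinates as in~\eqref{CalculPlusMoins}. Taking the imaginary part, since $\eps$ and $\mu$ are real-valued, yields the flux conservation
\[
\Im m\!\int_{\partial D_\xi}\!\eps_{\mrm{d}}^{-1}\,\partial_r u\,\overline{u}\,d\sigma \;=\; \Im m\!\int_{\partial D_\rho}\!\eps^{-1}\,\partial_r u\,\overline{u}\,d\sigma \;=:\; J.
\]

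Next, I would evaluate $J$ by expanding $u=bs^{\out}+\tilde u$ on $\partial D_\rho$. The pure singular contribution is given by~\eqref{J} specialized to $b_\pm=b$, $b_\mp=0$, while the cross-terms $s^{\out}\overline{\tilde u}$, $\tilde u\,\overline{s^{\out}}$ and the $|\tilde u|^2$ term vanish in the limit $\rho\to 0$ (the argument is analogous to the one used around~\eqref{J}, relying on $\tilde u\in H^1_{\mrm{loc}}$ and on the bounded modulus of $s^{\out}$). Hence
\[
J \;=\; \mp\,\eta\,|b|^2\int_{-\pi}^{\pi}\eps^{-1}\Phi^2\,d\theta,
\]
where the sign and the choice of $\Phi$ follow Table~\ref{tableSummary}. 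Using~\eqref{relationSpectral} one checks that in both cases $\kappa_\eps\in(-b_\Sigma;-1)$ and $\kappa_\eps\in(-1;-1/b_\Sigma)$ the product on the right is nonpositive, so that $J\le 0$, with equality if and only if $b=0$.

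To conclude, apply the identity~\eqref{DvpSommerfeld} to $u$ (which coincides with $u^{\mrm{sca}}$ since $u^{\mrm{inc}}=0$): by the Sommerfeld radiation condition the left-hand side tends to $0$ as $\xi\to+\infty$, hence
\[
\int_{\partial D_\xi}\!|\partial_r u|^2\,d\sigma + k^2\!\int_{\partial D_\xi}\!|u|^2\,d\sigma \;=\; 2k\,\eps_{\mrm{d}}\,J + o(1).
\]
Since the left-hand side is nonnegative and the right-hand side is $\le o(1)$, one forces $\int_{\partial D_\xi}|u|^2\,d\sigma\to 0$. Rellich's lemma then gives $u=0$ in $\R^2\setminus\overline{D_R}$, after which Holmgren's theorem applied first across $\partial D_R$ inside $\Om_{\mrm{d}}$ and then across the smooth part of $\Sigma$ yields $u=0$ in $\Om_{\mrm{d}}\cup\Om_{\mrm{m}}$, exactly as at the end of the proof of Lemma~\ref{lem:uniqueness}. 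Finally, from $bs^{\out}+\tilde u=0$ a.e.\ and the fact that $s^{\out}\notin H^1_{\mrm{loc}}$ while $\tilde u\in H^1_{\mrm{loc}}$, we deduce $b=0$ and $\tilde u=0$, i.e.\ $u\equiv 0$.

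The main technical obstacle is the careful justification that the cross-terms in $J$ really vanish as $\rho\to 0$, given that $\tilde u$ is only assumed to belong to $H^1_{\mrm{loc}}$; this requires a precise Kondratiev-type decomposition of $\tilde u$ (which the paper refers to in~\S\ref{proof_lemma_energy_DIC}) to control $\tilde u$ in suitable weighted norms near~$\bfc$. Once this is granted, the sign information provided by~\eqref{relationSpectral}, which is exactly what distinguishes the outgoing singularity, does all the work.
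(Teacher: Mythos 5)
Your proof is correct and follows essentially the same route as the paper's: the flux identity on $D_R\setminus\overline{D_\rho}$, the evaluation $J=-\eta|b|^2\bigl|\int_{-\pi}^{\pi}\eps^{-1}\Phi^2\,d\theta\bigr|\le 0$ via \eqref{CalculPlusMoins}, \eqref{relationSpectral} and the vanishing of the cross-terms (which the paper also defers to the weighted estimates of \S\ref{proof_lemma_energy_DIC}), and then Rellich's lemma and Holmgren's theorem as in Lemma~\ref{lem:uniqueness}. Your explicit rewriting of \eqref{DvpSommerfeld} as $\int_{\partial D_\xi}|\partial_r u|^2\,d\sigma+k^2\int_{\partial D_\xi}|u|^2\,d\sigma=2k\eps_{\mrm{d}}J+o(1)$ just makes precise what the paper abbreviates, and the final observation that $b=0$ is a harmless addition.
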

\begin{proof}
Consider some $u$ admitting decomposition \eqref{expansionSolution} and satisfying Problem (\ref{eq:startingpb}) with $u^{\mrm{inc}}=0$. Multiplying the volume equation of (\ref{eq:startingpb}) by $\overline{u}$, integrating by parts in $D_R \setminus \overline{D_\rho}$ and taking the imaginary part, we get the energy balance
\begin{equation}\label{J-J}
\Im m\left(\int_{\partial D_R} \eps_\di^{-1}\partial_r u\, \overline{u} \, d\sigma  \right)=\Im m\left(\int_{\partial D_\rho} \eps^{-1}\partial_r u\, \overline{u} \, d\sigma  \right).
\end{equation}
Denote $J_\ext$ (resp. $J$) the left-hand side (resp. right-hand side) of (\ref{J-J}). We have selected $s^{\out}$ so that 
\[
J=-\eta\,\vert b \vert^2  \left| \dsp \int_{-\pi}^{\pi} \eps\inv \Phi^2 \, d\theta\right| \le0
\]
(see the discussion after (\ref{relationSpectral})). Therefore, from (\ref{J-J}) we deduce that $J_\ext\le0$. This is true also with $R$ replaced by $\xi\ge R$. Then using identity (\ref{DvpSommerfeld}) and working as in the proof of Lemma \ref{lem:uniqueness} with Rellich's lemma, we obtain $u=0$ in $\R^2$.
\end{proof}

The proof of existence of a solution requires more involved arguments based on the Kondratiev theory \cite{Kond67} and is beyond the scope of the present article. We refer the reader to \cite{BoChCl13,BoCh13} where a detailed explanation of the technique is presented in a simple geometry. Finally we can state the
\begin{proposition}\label{propInsideInterval}
Let $\om>0$ be a given frequency. Assume that the contrast $\kappa_{\eps}=\e{\text{m}}/\e{\text{d}}$ verifies $\kappa_{\eps}\in (-b_{\Sigma};-1/b_{\Sigma})\setminus \{-1\}$, where $b_\Sigma$ has been defined in \eqref{defParamIntervalle}. Then Problem (\ref{eq:startingpb}) has a unique solution $u$ admitting decomposition \eqref{expansionSolution}. Moreover there exists $C>0 $ independent of the data $g^{\mrm{inc}} $ such that
\[\vert b \vert + \Vert \tilde{u} \Vert_{H^1(D_R)} \leq C \Vert g^{\mrm{inc}} \Vert_{L^2(\partial D_R).}\]
\end{proposition}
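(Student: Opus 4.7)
Since Lemma \ref{cor:uniqueness_Kondra} already provides uniqueness, the task is to establish existence together with the a priori estimate; stability then follows from the open mapping theorem applied to the bijective bounded operator. The plan is to recast the problem in a Kondratiev-type weighted Sobolev space framework near the corner $\bfc$, prove Fredholmness there via a localized version of the $\tT$-coercivity argument used in Proposition \ref{prop:pb_wp}, and finally restore index zero by enlarging the trial space with the single outgoing singularity $s^{\out}$.

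First I would introduce a weighted space $\mV^{1}_{\beta}(D_R)$ that coincides with $H^1$ outside a small disk $D_\rho$ around $\bfc$ and carries the weight $r^{\beta}$ near $\bfc$ (with the usual Kondratiev convention penalizing $u$ by $r^{\beta-1}$ and $\nabla u$ by $r^{\beta}$). By the explicit description of $\Lambda$ in Proposition \ref{prop:fonction_zeros} and Lemma \ref{lem:exist_ilambda}, one may fix $\beta>0$ small enough that the strip $\{\lambda\in\mathbb{C}\,:\,0<\Re e\,\lambda\le\beta\}$ contains no singular exponent. Then neither $s^+$ nor $s^-$ belongs to $\mV^1_\beta(D_R)$, and the only $\lambda=0$ contribution compatible with this space is the constant, so a function $u$ which admits decomposition \eqref{expansionSolution} is precisely one of the form $u=bs^{\out}+\tilde u$ with $\tilde u\in\mV^{1}_{\beta}(D_R)$.

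Second, following the same splitting \eqref{bilinear_b} as in Proposition \ref{prop:pb_wp}, I would look for $\tilde u\in\mV^{1}_{\beta}(D_R)$ satisfying a sesquilinear equation obtained by moving the contribution of $bs^{\out}$ to the right-hand side (this is well-defined because $\eps^{-1}\nabla s^{\out}\in L^1_{\loc}$ and $\div(\eps^{-1}\nabla s^{\out})=0$ near $\bfc$). Proving Fredholmness of the resulting operator $\mA_\beta:\mV^{1}_{\beta}(D_R)\oplus\mathbb{C}\to(\mV^{1}_{\beta}(D_R))^{\ast}\oplus\mathbb{C}$ requires a \emph{local} $\tT$-coercivity construction near $\bfc$: one designs a weighted analogue $\tT_\beta$ that reflects the two angular regions across $\Sigma$, glued by a cut-off to the global $\tT$ of Annex \ref{Annex-Tcoercivity}. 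Away from $\bfc$ and in the exterior region, the analysis is identical to Proposition \ref{prop:pb_wp}; near $\bfc$ one invokes a Mellin-transform argument based on the operator pencil associated with Problem \eqref{pb_phi}, whose invertibility on the line $\Re e\,\lambda=\beta$ is ensured by the choice of $\beta$.

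Third, the weighted Fredholm index on $\mV^{1}_{\beta}(D_R)$ alone is not zero: removing the two singular exponents $\pm i\eta$ from the admissible asymptotics changes the index by $-1$ (see the index formulas in \cite{BoChCl13,BoCh13}). Adding the one-dimensional degree of freedom carried by $b\in\mathbb{C}$ in \eqref{expansionSolution} restores a Fredholm operator of index zero. Combined with the uniqueness statement of Lemma \ref{cor:uniqueness_Kondra}, this yields bijectivity, and the continuity estimate $|b|+\|\tilde u\|_{H^1(D_R)}\le C\|g^{\mrm{inc}}\|_{L^2(\partial D_R)}$ follows from the bounded inverse theorem.

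The main obstacle will be Step 2: unlike the classical Kondratiev setting, the spectral Problem \eqref{pb_phi} is non-selfadjoint because of the sign change in $\eps$, so the Mellin symbol is not normal and the usual tools require adaptation. The crucial verification is that, once the weight $\beta$ is chosen outside the critical strip, the $\tT_\beta$-coercivity estimate near $\bfc$ survives the presence of the oscillating exponents $\pm i\eta$ on the imaginary axis, which is exactly where the standard $H^1$ analysis broke down. Once this local estimate is obtained and stitched to the global construction, the rest of the argument is a direct reproduction of the scheme of Proposition \ref{prop:pb_wp}.
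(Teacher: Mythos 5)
You should first be aware that the paper itself does not prove existence for this proposition: its ``proof'' consists of Lemma \ref{cor:uniqueness_Kondra} for uniqueness plus the explicit statement that existence ``requires more involved arguments based on the Kondratiev theory and is beyond the scope of the present article'', with a pointer to \cite{BoChCl13,BoCh13}. Your program --- weighted Kondratiev spaces near the corner, Fredholmness via a weighted $\tT$-coercivity/Mellin argument, index restored by adjoining $s^{\out}$, bijectivity from the uniqueness lemma, and the estimate from the open mapping theorem --- is exactly the route those references take, so you are aligned with the paper's intent and you go further than the paper in writing the scheme down.

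Two concrete problems remain. First, there is a sign error in your Step 1: with the convention you state (gradient penalized by $r^{\beta}$, $\beta>0$), the space $\mV^1_\beta(D_R)$ is \emph{larger} than $H^1$ near the corner, and a direct computation, $\int_0^\rho r^{2\beta}\,r^{-2}\,r\,dr<\infty$ for $\beta>0$, shows that both $s^+$ and $s^-$ (and also $\ln r$) \emph{do} belong to it; your claimed identification of decomposition \eqref{expansionSolution} with ``$\tilde u\in\mV^1_\beta$'' therefore fails as written. One must instead work with the small space carrying the weight $r^{-\beta}$, which admits $r^{\lambda}\Phi$ only for $\Re e\,\lambda>\beta$; but then the constant, which sits at $\lambda=0$ on the critical line, is also excluded and must be adjoined by hand alongside $s^{\out}$, and the index bookkeeping of your Step 3 changes accordingly (the exponent $0$ contributes to the jump of the index, not only $\pm i\eta$). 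Second, the heart of the matter --- Fredholmness of the weighted operator despite the sign change of $\eps$, i.e.\ the weighted $\tT$-coercivity estimate near $\bfc$, or equivalently the invertibility of the non-selfadjoint Mellin symbol of \eqref{pb_phi} on the shifted line --- is only announced as ``the crucial verification'' and never carried out. That is precisely the step the paper declares beyond its scope, so your proposal, like the paper, remains a roadmap rather than a proof of existence; unlike the paper, it also contains a fixable but genuine error in the definition of the functional setting.
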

\begin{remark}\label{rmk:volumic_source}
The results of Proposition \ref{propInsideInterval} can be extended to 
consider Problem (\ref{eq:startingpb}) with a volume equation replaced by $\div (\eps \inv \grad u) +k^2_0 \mu u = f$, $f$ being a given source term. Well-posedness is ensured if $f$ has a compact support and if $f$ is such that $r^{1-\nu} f \in L^2(\R^2) $ for some $\nu >0 $. In particular $f \in L^2(\R^2)$ with a compact support is allowed.
\end{remark}
\noindent Let us make some comments to conclude this section: \\[5pt]
$\bullet$ In the recent paper \cite{Li14}, the author suggests that the good way to formulate the scattering problem for a contrast inside the critical interval is to choose $u$ in the vicinity of the corner such that $J$ defined in \eqref{def_J} vanishes. This is attractive because in this case, the metallic scatterer neither absorbs nor produces energy (like in (\ref{eq:DR})). To get such a solution, one must keep both incoming and outgoing singularities, with the balancing condition $\vert b^+ \vert = \vert b^- \vert $. In other words, $u$ must decompose as
\begin{equation}\label{new_decomp}
u = b (s^+ + e^{it} s^-) + \tilde{u},
\end{equation}
where $b \in \mathbb{C} $, $t\in [0,2\pi)$ and where $\tilde{u} $ is a smooth function. In the present article, we did not use this criterion for the following reasons. First, there is still an undetermined parameter to set, namely the phase $t$. Second, the limiting absorption principle, which can be rigorously proven working as in \cite[Theorem 4.3]{BoChCl13}, is satisfied in the setting \eqref{expansionSolution} but not in the setting \eqref{new_decomp}. Therefore, it is our opinion that the decomposition \eqref{expansionSolution} is more relevant from a physical point of view than \eqref{new_decomp}. \\
$\bullet$ When the contrast and the interface are such that for the $N$ vertices $\bfc_1,\dots,\bfc_N$, there exist oscillating singularities $s^{\pm}_n(r_n,\theta_n):=r_n^{\pm i\eta_n}\Phi_n(\theta_n)$ at $\bfc_n$, ($\eta_n >0 $) the analysis is exactly the same. Here, $(r_n,\theta_n)$ denote the polar coordinates associated with $\bfc_n$. In this case, we can prove that Problem (\ref{eq:startingpb}) has a unique solution $u$ which admits the expansion
\begin{equation}\label{representationSolutionMulti}
u=\sum_{n=1}^N  b_n\, s^{\out}_n  +\tilde{u}\ \mbox{ in }\R^2,\qquad \mbox{with } b_n \in\Cplx\mbox{ and } \tilde{u}\in H^1_{\mrm{loc}}(\R^2).
\end{equation}
In that case, working as in \eqref{J-J}, denoting $\partial D^n_\rho:=\{\bfx\in\R^2\,|\,|r_n|=\rho\}$, we obtain the energy balance
\begin{equation}\label{J-JN}
J_\ext = \sum \limits_{n=1}^N J_n,\quad\mbox{ with }J_n:=\Im m\left(\int_{\partial D^n_\rho} \eps^{-1}\partial_{r_n} u\, \overline{u} \, d\sigma  \right)=-\eta_n\,\vert b_n \vert^2  \left| \dsp \int_{-\pi}^{\pi} \eps\inv \Phi_n^2 \, d\theta_n\right|.
\end{equation}
Using \eqref{J-JN}, one can quantify the energy trapped by each corner (see \S \ref{ssec:energy} for numerical illustrations).

\section{Approximation of the solution for a contrast inside the critical interval}\label{sec:PMLs}

We have obtained a well-posed formulation for Problem (\ref{eq:startingpb}) with a contrast inside the critical interval. It leads to look for solutions $u$ which decompose as $u= \sum_{n}b_n\,s_n^{\out} +\tilde{u}$ (see (\ref{representationSolutionMulti})). Now, a natural question is: how to approximate $u$? Let us try to use a classical finite element method. We consider a setting where the inclusion is a triangle made of silver embedded in vacuum. The angles of the triangle (see Figure \ref{img:FemDic}) are equal to  $\phi_1 = \pi/6$ (top corner) and $\phi_2 = \phi_3 = 5\pi/12$ (bottom corners). For such geometry, according to (\ref{defParamIntervalle}) and \eqref{eq:Ic}, we have $b_\Sigma =  (2\pi-\pi/6)/(\pi/6) = 11$ so that the critical interval is given by 
\[
I_c=[ -11; -1/11].
\]
For silver, the plasma frequency is $\omega_p = 13.3 \text{ PHz}$ \cite{Palik85}. From the dissipationless Drude's model \eqref{eq:drude}, we deduce that
\[
\kappa_\eps \in I_c \quad \Longleftrightarrow \quad \omega \in \left[ \frac{\omega_p}{\sqrt{1 + b_\Sigma}} ; \frac{\omega_p}{\sqrt{1 + 1/b_\Sigma}}\right] = [3.839\text{ PHz} ; 12.733\text{ PHz}].
\]
For our experiment, we set $\omega  = 9\text{ PHz}$ (corresponding to $\e\me (\omega) = -1.1838$), $\e\di = 1$, $\mu_\me = \mu_\di = 1$. Therefore, we have $\kappa_{\eps}=\e\me/\e\di=-1.1838 \in [-11;-1]$. For the other parameters, we take
\[ 
u^{\mrm{inc}}(\bfx) = e^{i \textbf{k} \cdot \bfx},\quad \textbf{k}  = k  \left( \cos \alpha^{\mrm{inc}} \overrightarrow{e_x} + \sin \alpha^{\mrm{inc}} \overrightarrow{e_y}\right),\quad k= k_0 = \omega/c, \qquad \mbox{ and } \quad \alpha^{\mrm{inc}} = -\pi/12.
\]

\begin{figure}[!h]
\centering
\includegraphics[width=0.28\columnwidth]{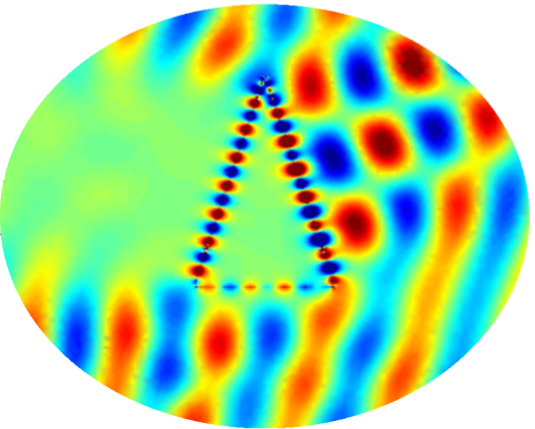}\qquad
\includegraphics[width=0.28\columnwidth]{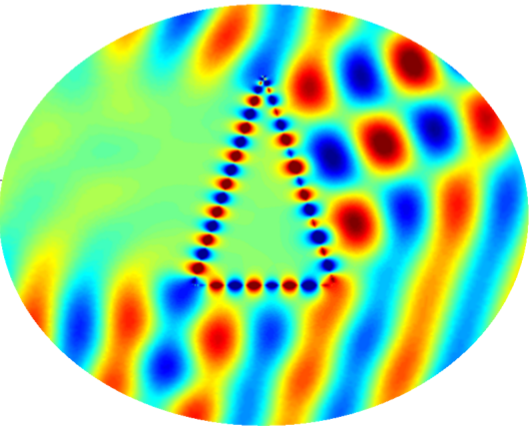}\qquad 
\includegraphics[width=0.28\columnwidth]{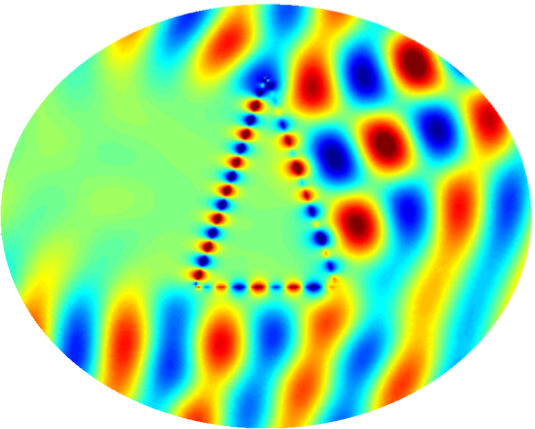}
\caption{Solution obtained using a standard P2 finite element method for different meshes: from left to right, 13273 nodes, 56031 nodes and 100501 nodes. The frequency is set to $\omega = 9$ PHz ($\kappa_{\eps}=-1.1838 $). Note that the computed field is not stable at the interface when we refine the mesh. \label{img:FemDic}}
\end{figure}

\noindent In Figure \ref{img:FemDic}, we represent the approximated total field obtained using a standard P2 finite element method for three different meshes. The incident plane wave produces both a usual scattered field outside the inclusion and a typical plasmonic wave at the interface between the two materials. When we refine the mesh, the scattered field outside the inclusion is approximately stable. However, the plasmonic wave seems very sensitive to the mesh (see in particular at the bottom and right edges of the inclusion).  The numerical solution does not converge when the mesh size tends to zero, the classical finite element method fails to approximate the field which is not in $H^1$ locally around the corners. More precisely, this is due to the fact that it is impossible to capture the oscillations of the singularities $s^{\out}_n$ (see Figures \ref{dessin propagative singu coupe 1D}, \ref{img:black-hole_waves}) with a mesh of given size. Spurious reflections are always produced. Hence, we have to develop another method.

\subsection{Analogy with a waveguide problem}\label{paragraphAnalogyWaveguide}

\begin{figure}[!ht]
\centering
\def\svgwidth{0.85\columnwidth}
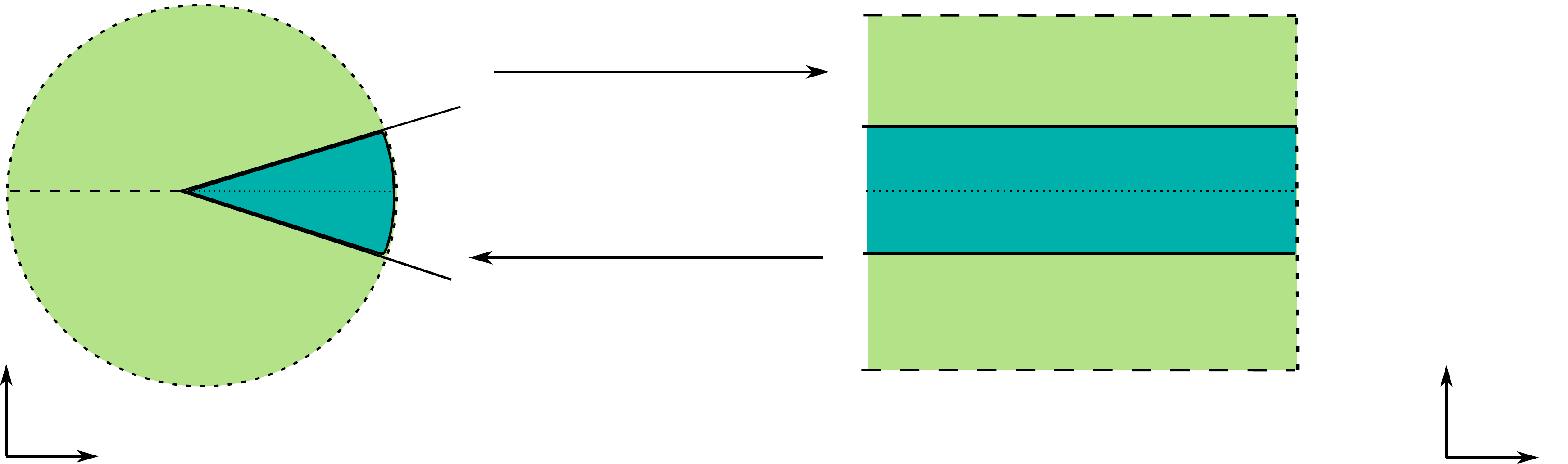
\caption{Change of variables at the corner. The disk $D_\rho $ is transformed into a semi-infinite strip $S_\rho$ (the waveguide) with periodic conditions in $\theta$. As $r\rightarrow 0 $, $z$ tends to  $-\infty $.\label{img:zoom}}
\end{figure}

In order to capture the oscillations of $s^{\out}_n$, a natural idea consists in unfolding a neighbourhood of each corner using a change of variables. To explain the idea, assume that there is only one corner $\bfc$. Define $z=\ln r$ (as it is classical for the Mellin transform \cite{Kond67}), $\breve{u}(z,\theta)=u(e^{z},\theta)$. In a neighbourhood of $\bfc$, the function $\eps $ depends only on $\theta$ so we make no difference between $\eps(r,\theta)$ and $\eps(z,\theta)$. With this notation, as illustrated by Figure \ref{img:zoom}, Equation (\ref{eq:pb_polar_coord}) in $D_{\rho}$ is changed into the equation
\[
\big( {\eps}^{-1}\partial^2_z+\partial_{\theta} {\eps}^{-1} \partial_{\theta}\big)\breve{u} + k^2_0 e^{2 z} \mu \breve{u} =0
\]
in the semi-infinite strip (the waveguide) $S_\rho:=(-\infty;\ln\rho)\times(-\pi;\pi)$. Note that $z=\ln r\rightarrow- \infty $ when $r\rightarrow 0$. As a consequence, the corner is sent to $-\infty$ in the waveguide. With this change of variables, the function $\breve{u} $ is $2\pi$-periodic in $\theta $: $\breve{u}(\cdot, -\pi) =  \breve{u}(\cdot, \pi) $ and $\partial_\theta \breve{u}(\cdot, -\pi) =  \partial_\theta \breve{u}(\cdot, \pi) $. On the other hand, the term $k^2_0 e^{2 z} \mu$ is exponentially decaying as $z\rightarrow - \infty $. As a consequence, the behaviour of $\breve{u} $ at $-\infty$ is determined by the functions $\breve{s}$ which satisfy
\[
\big( {\eps}^{-1}\partial^2_z+\partial_{\theta} {\eps}^{-1} \partial_{\theta}\big)\breve{s} = \div (\eps \grad \breve{s}) =0 \quad \mbox{in } S_\rho
\]
and which are $2\pi$-periodic for the $\theta$ variable. Since $r=e^z$, the singularities $s(r,\theta)=r^{\lambda}\Phi(\theta)$, solutions of \eqref{pbVariablesSeparees} in $D_{\rho}$, are turned into $\breve{s}(z,\theta)=e^{\lambda z}\Phi(\theta)$ in $S_\rho$. These functions are commonly called the modes of the waveguide $S_\rho$. In \S \ref{ssec:singular_expo}, we said that for $\lambda \in \Lambda $ such that $\Re e\,\lambda >0 $ the singularity $s=r^{\lambda}\Phi$ belongs to $H^1(D_\rho)$. In this case, the associated mode $\breve{s}=e^{\lambda z}\Phi$ is evanescent in the waveguide $S_\rho $. While, for $\lambda= \pm i \eta$, $ \eta>0$, the oscillating singularities $s^\pm$ do not belong to $H^1(D_\rho)$. The corresponding modes $\breve{s}^\pm :=e^{\pm i \eta z}\Phi$ are propagative in $S_\rho $. According to Lemma \ref{lem:exist_ilambda}, we know that propagative modes exist only for contrasts inside the critical interval. In the presence of propagative modes, it is well-known that a radiation condition at infinity in the waveguide has to be enforced to obtain a well-posed problem. But we have already done this work for the corner problem. Define  $\breve{s}^\out$ such that $\breve{s}^\out(z,\theta)=s^\out(e^z,\theta)$. Then in the waveguide $S_\rho $, we look for solutions $\breve{u}$ which decompose as $\breve{u} = b \breve{s}^\out + \breve{u}_{\mathrm{ev}}$, $b\in\Cplx$, where $\breve{u}_{\mathrm{ev}}$ is the sum of a constant term and evanescent modes at $-\infty$.\\
\newline
For numerical purposes, we will use the analogy with the waveguide writing a formulation of the scattering problem in a domain split in two parts, namely the perforated domain $D_R/\overline{D_\rho}$ and the semi-infinite strip $S_\rho$. The main difficulty lies in the fact that the new geometry is unbounded and that the solution we want to approximate does not decay at infinity in $S_\rho$. However, many efficient techniques have already been developed to consider waveguide problems in presence of propagative modes. A class of methods consists in bounding artificially the waveguide to compute an approximation of the solution on a bounded domain. For this kind of approaches, it is well known that the waveguide has to be bounded in a clever way to avoid spurious reflections on the artificial boundary. One technique to achieve this end is to use a Perfectly Matched Layer (PML) \cite{Ber98,BeBoLe04}. In the following, we apply this method to our problem. First, we set up the PML. Then, we explain how to truncate the PML to derive a formulation set in a bounded domain which can be discretized numerically.

\subsection{An approximation of the scattering problem at the continuous level}\label{ChoicePMLparam}

Imposing a PML in the semi-infinite strip $S_\rho $ boils down to compute an analytic continuation of $\breve{u}$. In practice it leads to make the complex stretching $z\mapsto z/\alpha$, $\alpha\in\Cplx\setminus\{0\}$. With this stretching one finds that $\breve{u}_\alpha(z,\theta) := \breve{u}(z/\alpha, \theta) $ satisfies
\[ \big(\alpha^{2} {\eps}^{-1}\partial^2_{z}+\partial_{\theta}{\eps}^{-1} \partial_{\theta}\big)\breve{u}_\alpha+e^{2z/\alpha }k_0^2 {\mu} \breve{u}_\alpha=0 \quad \mbox{in }S_\rho .
\]
Let us explain how to choose the parameter $\alpha\in\Cplx\setminus\{0\}$. Without loss of generality, we can impose $|\alpha|=1$ so that $\alpha=e^{i\vartheta}$ for some $\vartheta\in(-\pi;\pi]$. In order for the function $z\mapsto e^{2z/\alpha}$ to be exponentially decaying at $-\infty$, we impose $\Re e\,\alpha>0$ which amounts to take $ \vartheta\in(-\pi/2;\pi/2)$. On the other hand, observe that the modes of the problem  $\big(\alpha^{2}\breve{\eps}^{-1}\partial^2_{z}+\partial_{\theta}\breve{\eps}^{-1} \partial_{\theta}\big)\breve{w}=0$ with periodic boundary conditions for the $\theta$ variable are the functions $(z,\theta)\mapsto e^{\lambda z/\alpha}\Phi(\theta)$, where $(\lambda,\Phi)$ corresponds to an eigenpair of Problem (\ref{pb_phi}). And if $\breve{u}$ decomposes on the modes $e^{\lambda z}\Phi(\theta)$, $\breve{u}_\alpha$, the analytic continuation of $\breve{u}$, decomposes on the modes $e^{\lambda z/\alpha}\Phi(\theta)$. Therefore, in order $\breve{u}_\alpha$ to be exponentially decaying at $-\infty$, we have to choose $\alpha$ such that there holds $\Re e\,(\lambda/ \alpha)>0$ for all $\lambda\in\Lambda^{\out}\setminus\{0\}$. Here, $\Lambda^{\out}$ refers to the set of singular exponents appearing in the modal decomposition of $\breve{u}$:
\[
\Lambda^\out :=\lbrace 0, \lambda^\out \rbrace\cup\tilde{\Lambda}^{\out}\qquad\mbox{ with }\quad  \tilde{\Lambda}^{\out}=\{\lambda \in \Lambda\,| \, \Re e\,\lambda >0 \rbrace,
\]
where $\lambda^\out$ denotes the singular exponent of $s^\out$ defined according to Table \ref{tableSummary}. This means that we choose $\vartheta $ such that $\pi/2 + \arg(\lambda) > \vartheta > -\pi/2 + \arg(\lambda) $ for all $\lambda \in \Lambda^\out \setminus \lbrace 0 \rbrace$, where $\arg:\Cplx\setminus\{0\}\to(-\pi;\pi]$ denotes the complex argument. Let us clarify this. \\
\newline
$\star$ When $\kappa_{\eps}\in(-b_{\Sigma};-1)$, according to Table \ref{tableSummary}, we have $-i\eta\in \Lambda^{\out}$ and $i\eta\notin \Lambda^{\out}$. In this case, one takes $\alpha$ such that $\Re e\,(-i\eta/\alpha)>0$, that is $ \Im m\,(\alpha)<0$. Then, we choose $\alpha=e^{i\vartheta}$ with $\vartheta\in(\vartheta_-;0)$, where $\vartheta_- :=-\pi/2+\max \limits_{\lambda \in \tilde{\Lambda}^{\out}} \arg(\lambda)$. \\
\newline
$\star$ When $\kappa_{\eps}\in(-1;-1/b_{\Sigma})$, we have $i\eta\in \Lambda^{\out}$ and $-i\eta\notin \Lambda^{\out}$. Working as above, we find that a good choice for $\alpha$ is $\alpha=e^{i\vartheta}$ with $\vartheta\in(0;\vartheta_+)$, where $ \vartheta_+:=\pi/2+ \min  \limits_{\lambda \in \tilde{\Lambda}^{\out}} \arg(\lambda)$.
\begin{figure}[!h]
\centering
\includegraphics[width=0.3\columnwidth]{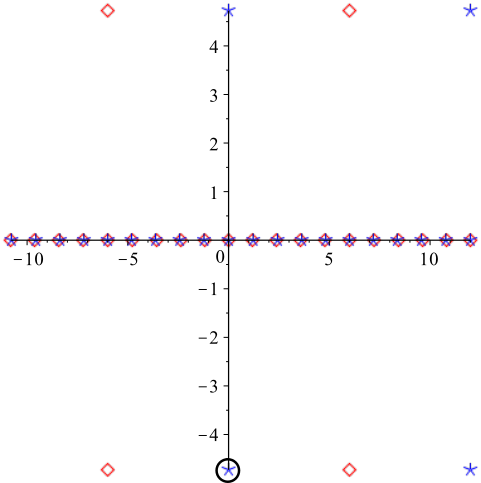}\quad\includegraphics[width=0.3\columnwidth]{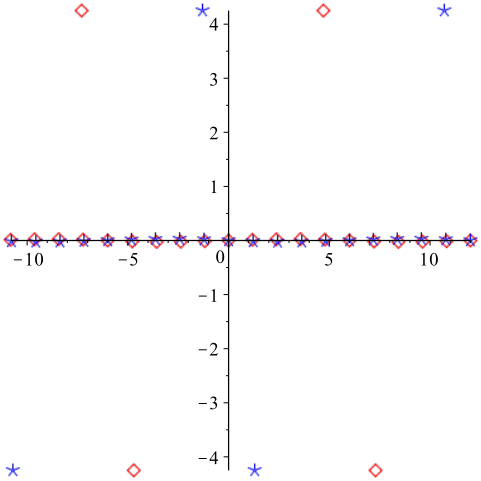}\quad\includegraphics[width=0.33\columnwidth]{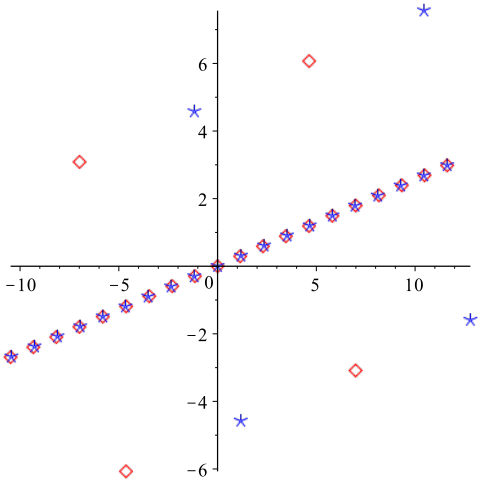}
\caption{Left: set of singular exponents $\Lambda =\Ls \cup \Lss$ (diamonds for $\Ls $ and asteriks for $\Lss $) for $\omega =9$ PHz ($\ke = -1.1838 \in I_c $) and a corner aperture $\phi = \pi/6 $. The singular exponent associated to $s^\out $ is circled in black. In accordance with the result of Proposition \ref{propositionLimitingAbsorption}, the outgoing singularity $s^\out $ in this setting is $s^- $ (and it is skew-symmetric). Middle: set $\Lambda^{\gamma}$ for small $\gamma$ ($\gamma$ is the dissipation). Right: set of singular exponents with the PML.  \label{img:spectre_compa_pml_dissip}}
\end{figure}

\noindent In Figure \ref{img:spectre_compa_pml_dissip}, we display the sets of singular exponents for the problem with small dissipation and for the problem with a PML. The important point is that these two regularization processes move the singular exponent $\lambda^\out$ in the half plane $\{\lambda\in\Cplx\,|\,\Re e\,\lambda>0\}$. This suggests that our PML parameter $\alpha$ has been correctly set. For the problem with a PML, all the modes except the constant one are evanescent. Therefore, for numerical purposes, we can truncate the waveguide $S_\rho$. Define the domain $S^{L}_\rho=(\ln \rho-L;\ln\rho)\times(-\pi;\pi)$ with $L>0$. On the boundary $\{\ln \rho -L\}\times (-\pi;\pi)$, we impose the Neumann condition $\partial_z \breve{u}=0$ to allow constant behaviour at $-\infty$ (the PML has no influence on the constant mode). We emphasize that the Dirichlet boundary condition would produce spurious reflections. We also introduce the parameter $L_0\in(0;L)$ such that the term $ k_0^2 \breve{\mu} e^{2z} \breve{u}$ becomes neglectable for all $z \leq \ln \rho -L_0 $ and we define the function ${\alpha} $ such that
\begin{equation}\label{defParaPML}
 {\alpha}(z) := \left\{\begin{array}{ll} 1 &  \mbox{ for } z \in (\ln \rho - L_0; \ln \rho),\\
\alpha &  \mbox{ for } z \in (\ln \rho -L; \ln\rho - L_0).
\end{array}\right.
\end{equation}
Working in the waveguide enables to dilate the radial coordinate near the corner and $L_0$ defines the beginning of the PML. The previous analysis leads us to consider the problem
\begin{equation}\label{eq:PbPML1}
\begin{array}{|rcll}
\multicolumn{4}{|l}{\text{Find } (u,\breve{u}) \in H^1(D_R\setminus \overline{D_{\rho}})\times H^1_{\mrm{per}}(S^L_{\rho})\text{ such that: }}\\[6pt]
\div\left({\eps}^{-1}\nabla u\right) + k_0^2 \mu u & = & 0 & \text{ in } D_R\setminus \overline{D_{\rho}} ,\\[6pt]
 \partial_{r} u -\mathcal{S} u & =  &   g^{\mrm{inc}} & \text{ on } \partial D_R, \\[6pt]
\big({\eps}^{-1}\partial_z {\alpha} \partial_z+{\alpha} \inv \partial_{\theta}{\eps}^{-1} \partial_{\theta}\big)\breve{u}+ {\alpha}\inv  e^{2z/\alpha }k_0^2 \mu \breve{u}& = & 0 &  \text{ in } S^{L}_{\rho} ,\\[6pt]
\partial_z \breve{u}(\ln \rho -L,\cdot) & = & 0,\\[6pt]
\multicolumn{4}{|l}{u(\rho,\cdot)=\breve{u}(\ln\rho,\cdot),\ \rho\partial_{r} u(\rho,\cdot)=\partial_{z} \breve{u}(\ln\rho,\cdot),}
\end{array}
\end{equation}
where $H^1_{\mrm{per}}(S^L_{\rho})$ denotes the Sobolev space of functions of $H^1(S^L_{\rho}) $ which are $2\pi$-periodic for the $\theta$ variable. We recall that $g^{\mrm{inc}} = \partial_{r}u^{\mrm{inc}} - \mathcal{S} u^{\mrm{inc}}$. Note that the above problem  is set in a split domain. The last two equations of \eqref{eq:PbPML1} ensure the matching between $u$ and $\breve{u}$ through $\partial D_\rho $. The variational formulation associated with (\ref{eq:PbPML1}) is given by
\begin{equation}\label{eq:PbPML2}
\begin{array}{|lcl}
\text{Find } (u,\breve{u}) \in X\text{ such that: } \\[6pt]
b_1(u,v)+b_2(\breve{u},\breve{v}) = l(v), \qquad \forall (v,\breve{v})\in X,
\end{array}
\end{equation}
with $X :=\{ (v,\breve{v}) \in H^1(D_R\setminus \overline{D_{\rho}})\times H^1_{\mrm{per}}(S^L_\rho) \,|\,v(\rho,\cdot)=\breve{v}(\ln\rho,\cdot)\}, \qquad l(v) =  \dsp \int_{\partial D_R} \frac{g^{\mrm{inc}}}{\e\di} \overline{v} \, d \sigma$  
\begin{equation}\label{defFormPML}
\begin{array}{l}
b_1(u,v) =  \dsp\int_{D_R\setminus\overline{D_\rho}} \eps^{-1}\nabla u \cdot \overline{\nabla v}\,d\bfx - k_0^2\dsp\int_{D_R\setminus\overline{D_\rho}}  \mu \, u \overline{v}\,d\bfx- \e\di\inv \sum  \limits_{n= -\infty}^{+\infty}   k \,\Frac{ H^{(1)'}_n (kR)}{H^{(1)}_n (kR)} u_n \overline{v_n} \\[14pt]
b_2(\breve{u},\breve{v}) =  \dsp\int_{S^L_\rho} {\alpha}\, {\eps}^{-1}\partial_z\breve{u}\,\partial_z\overline{\breve{v}}
+\alpha\inv \,{\eps}^{-1}\partial_{\theta}\breve{u}\,\partial_{\theta}\overline{\breve{v}}\,d\bfx - k_0^2\dsp\int_{S^L_\rho} {\alpha}\inv \, e^{2z/\alpha }\, \mu\,\breve{u}\,\overline{\breve{v}}\,d\bfx .
\end{array}
\end{equation}

\subsection{Numerical approximation}
\begin{figure}[!h]
\centering
\includegraphics[width=0.47\columnwidth]{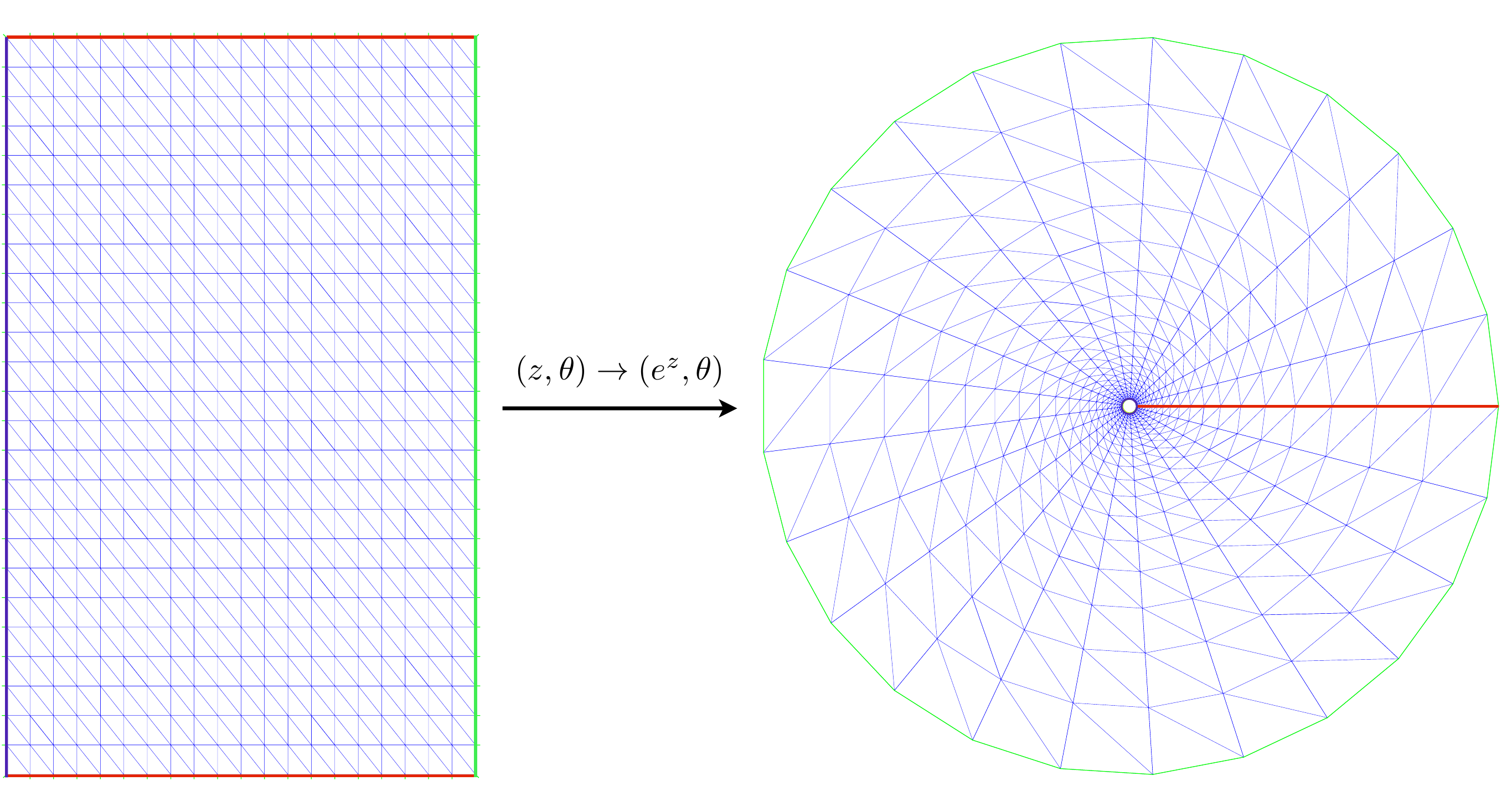} 
\caption{Left: mesh of $S^L_\rho $. Right: obtained mesh with the change of variables $(z,\theta)\mapsto (e^z,\theta)$.\label{figs_mesh_Euler}}
\end{figure}

Now we turn to the discretization of Problem \eqref{eq:PbPML2}. We use a $\mrm{P}2$ finite element method set on the domain which is the union of the perforated disk $D_{R} \setminus \overline{D_\rho}$ and the rectangle $S^L_\rho$. We introduce $(\mathcal{T}_h,\breve{\mathcal{T}}_h)_h$ a shape regular family of triangulations of $(D_{R,h} \setminus \overline{D_{\rho,h}},S^{L}_{\rho})$ where $D_{j,h}$ is a polygonal approximation of $D_{j}$, $j = R$, $\rho$. Here $h$ refers to the mesh size. We assume that the meshes are such that all the triangles of $(\mathcal{T}_h,\breve{\mathcal{T}}_h)_h$ are located either in the dielectric or in the metal. Note that considering a structured mesh for $S^L_\rho$ boils down to work with a mesh having a logarithmic structure near the corner (see Figure \ref{figs_mesh_Euler}). Due to the behaviour of the black-hole singularities, this is of course very interesting. To get (almost) conforming approximations of $X$, we impose that the nodes of $\mathcal{T}_h$ located on $\partial D_{\rho,h}$ coincide with the ones of $\breve{\mathcal{T}}_h$ situated on $\{\ln\rho\}\times[-\pi;\pi]$ (see Figure \ref{figs_mesh}). Then we define the family of finite element spaces
\[
\begin{array}{l}
X_h := \{(v, \breve{v})\in H^1(D_{R,\,h}\setminus \overline{D_{\rho,h}})\times H^1_{\mrm{per}}(S^L_{\rho})\mbox{ such that } v(\rho,\cdot)=\breve{v}(\ln\rho,\cdot),\\[2pt]
\phantom{X_h := \{}  \mbox{ and } (v|_{\tau},\breve{v}|_{\tau'})\in \mathbb{P}_2(\tau)\times\mathbb{P}_2(\tau')\mbox{ for all }(\tau,\tau')\in\mathcal{T}_h\times\breve{\mathcal{T}}_h\}.
\end{array}
\]
Practically, we compute the solution of the problem
\begin{equation}\label{eq:discrete_pbPML}
\begin{array}{|l}
\text{Find } (u_h,\breve{u}_h) \in X_h \text{ such that: } \\[6pt]
b_{1,h}(u_h,v_h) + b_{2}(\breve{u}_h,\breve{v}_h)= l_h(v_h) \quad \forall (v_h, \breve{v_h}) \in X_h.
\end{array}
\end{equation}
In (\ref{eq:discrete_pbPML}), the forms $b_{1,h}(\cdot,\cdot)$, $l_h(\cdot)$ are defined as $b_{1}(\cdot,\cdot)$, $l(\cdot)$ (see (\ref{defFormPML})) with $D_R\setminus\overline{D_\rho}$ replaced by $D_{R,h} \setminus \overline{D_{\rho,h}}$ and the approximate transparent boundary condition  $\partial_r u -(ik-(2R)^{-1})u = \partial_r u^{\mrm{inc}} -(ik-(2R)^{-1})u^{\mrm{inc}}$ instead of the Dirichlet-to-Neumann map $\mathcal{S}$. 
  
\begin{figure}[!h]
\centering
\includegraphics[width=0.35\columnwidth]{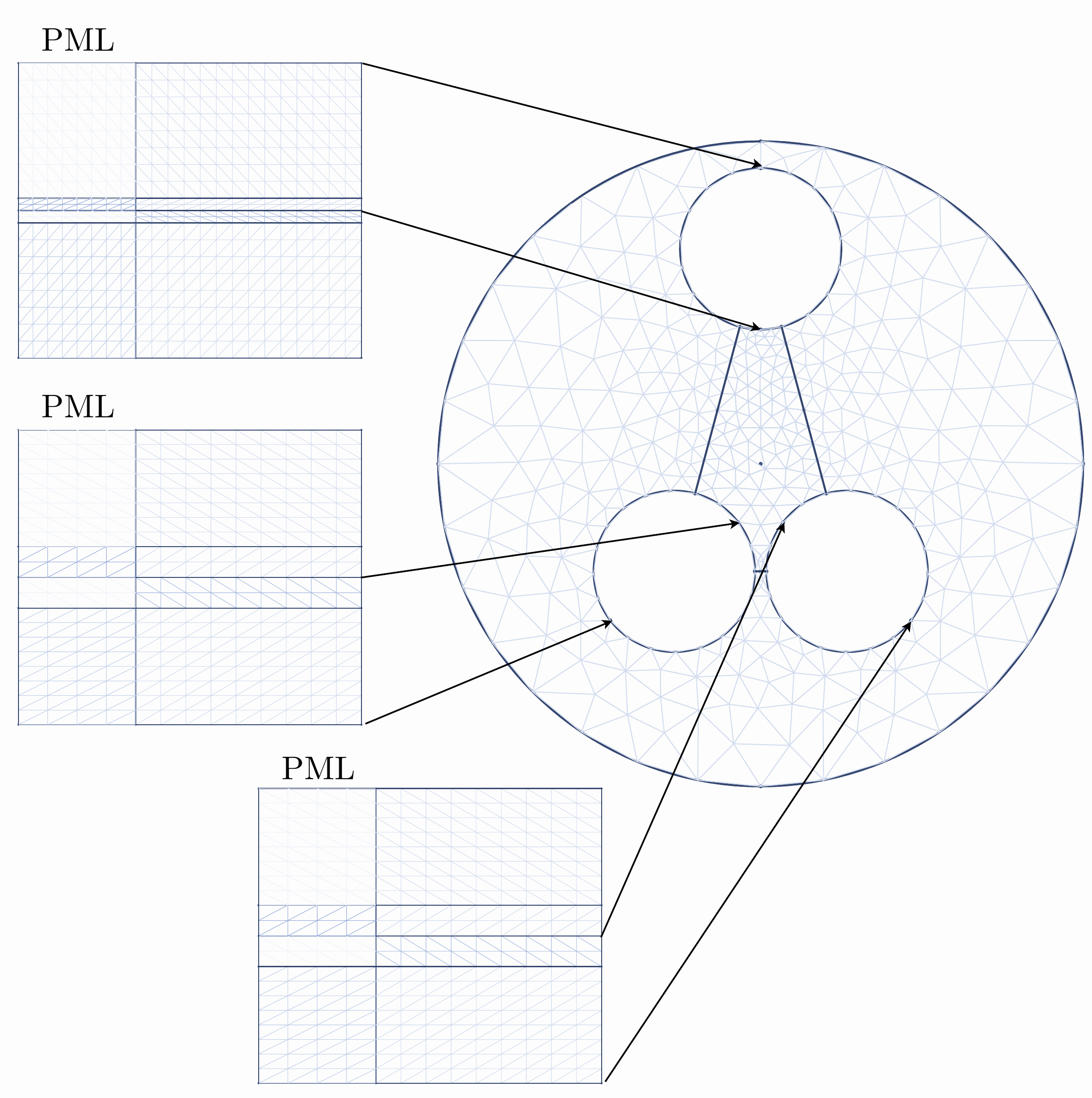} 
\caption{Example of (very coarse) mesh. The arrows show the matching nodes between the different meshes. \label{figs_mesh}}
\end{figure}

\begin{remark}
In the case where the metallic inclusion has $N$ corners, one  discretizes the problem 
\begin{equation}\label{eq:PbPML3}
\begin{array}{|rcll}
\multicolumn{4}{|l}{\mrm{Find }\ (u,\breve{u}_1, \dots,\breve{u}_N ) \in H^1(D_R \setminus \cup {}_{i=1}^{N} \overline{D_{\rho_i}})\times H^1_{\mrm{per}}(S_1^{L_1}) \times \dots \times  H^1_{\mrm{per}}(S_N^{L_N})\ \mrm{ such\ that}:}  \\[6pt]
\div\left({\eps}^{-1}\nabla u\right) + k_0^2 \mu u  & = & 0 & \mrm{ in }\ D_R\setminus \cup {}_{i=1}^{N} \overline{D_{\rho_i}}\\[6pt]
 \partial_{r} u -S u  & = &  g^{\mrm{inc}} & \mrm{ on }\ \partial D_R \\[6pt]
\big(   {\eps}^{-1}\partial_z{\alpha}_i \partial_z+{ {\alpha}_i}\inv \partial_{\theta} {\eps}^{-1} \partial_{\theta}\big)\breve{u}_i+ {{\alpha}_i}\inv  e^{2z {{\alpha}_i}\inv}k_0^2 \mu \breve{u}_i & = & 0  & \mrm{ in }\ S_i^{L_i}, \quad i=1,\dots,N \\[6pt]
\partial_z \breve{u}(\ln \rho-L_i,\cdot) & = & 0, & i=1,\dots,N \\[6pt]
\multicolumn{4}{|l}{u(\rho_i,\cdot)=\breve{u}_i(\ln\rho_i,\cdot),\ \rho_i \partial_{r} u(\rho_i,\cdot)=\partial_{z} \breve{u}_i(\ln\rho_i,\cdot),  \quad i=1,\dots,N.} 
\end{array}
\end{equation}
In (\ref{eq:PbPML3}), for $i=1,\dots,N $, $D_{\rho_i} $ is a small disk around ${\bfc}_i$ of radius $\rho_i$ while $S_i^{L_i}:=(\ln \rho-L_i;\ln\rho_i)\times(-\pi;\pi)$ with $L_i>0$ (see Figure \ref{figs_mesh}). For each PML, we use a parameter $\alpha_i$ as in (\ref{defParaPML}). 
\end{remark}

\subsection{Numerical experiments}
\subsubsection{Numerical results with the PML approach}

We present some results obtained with the analogous of Formulation \eqref{eq:discrete_pbPML} for Problem (\ref{eq:PbPML3}). We consider the same setting as in the beginning of \S\ref{sec:PMLs}. We choose PML coefficients such that $\alpha_1 = e^{-i 2\pi/25} $ (top corner) and $\alpha_2 = \alpha_3 = e^{-i 2\pi/33}$ (bottom corners). We consider an incident field of incidence $\alpha^{\mrm{inc}} = -\pi/12 $. In Figure \ref{figs_DIC_pml}, we observe that the numerical solution seems to converge when we refine the mesh, contrary to what happens without the PMLs (see Figure \ref{img:FemDic}). In Figure \ref{img:FemDicPML}, we display the field inside the PMLs. We note that it is correctly attenuated and that at the end of the PMLs, the solution seems constant. According to Table \ref{tableSummary}, for $\ke =  -1.1838$, the outgoing singularity at each corner is skew-symmetric with respect to the corner's bisectors. This is indeed the case. For this particular incidence, at the bottom right corner the solution is locally symmetric (with respect to the corner's bisector) and there is no excitation of the outgoing singularity.

\begin{figure}[!h]
\centering
\includegraphics[width=0.99\columnwidth]{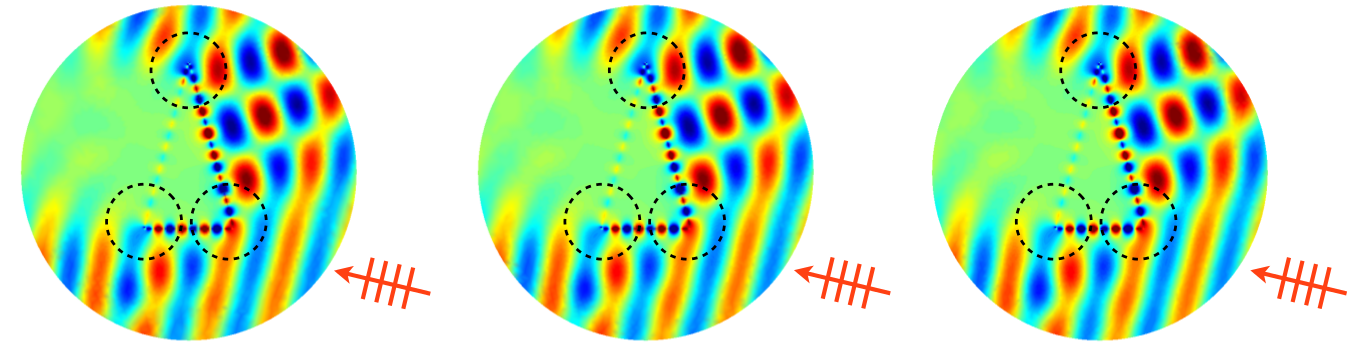}

\caption{Numerical solution in $D_R$  for $\omega = 9$ PHz ($\ke=-1.1838$). We reconstitute the field inside the three disks using the transform $(z,\theta)\mapsto (e^z,\theta)$. As we refine the mesh (from left to right), the solution does not change much.\label{figs_DIC_pml}}
\end{figure}

\begin{figure}[!h]
\centering
\includegraphics[width=0.6\columnwidth]{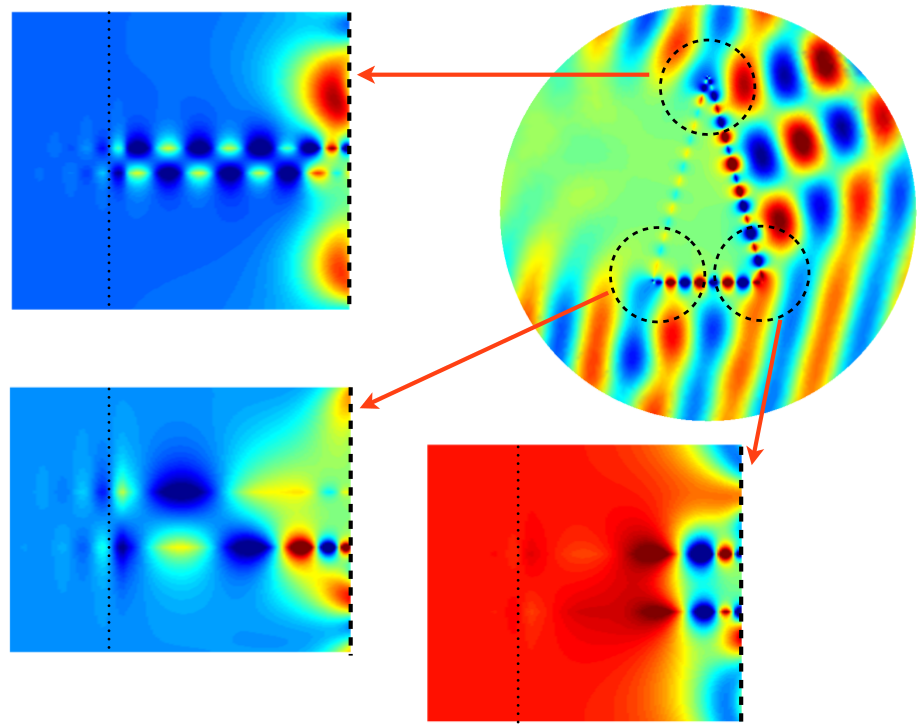}
\caption{Numerical solution in $D_R$ with PMLs for $\omega = 9$ PHz ($\ke=-1.1838$). The bold dashed lines correspond to the interfaces where matching is made, the small dotted lines represent the boundary of the PMLs. Note that for the chosen incidence $\alpha^{\mrm{inc}}=-\pi/12$ ($330$\textdegree), there is no trapped energy at the right corner (see also Figure \ref{img:Energy_flux_triangle}). \label{img:FemDicPML}}
\end{figure}

\subsubsection{Energy conservation}\label{ssec:energy}
In this section, we wish to give numerical illustrations of the energy balance $J_\ext = \sum_{n} J_n$ obtained in (\ref{J-JN}). Using (\ref{eq:Sommerfeld_exacte}), (\ref{J-J}) and (\ref{J-JN}), we find 
\begin{equation}\label{eq: flux_cons_N}
J_\ext = \Im \text{m} \left( \int_{\partial D_R} \eps_\di^{-1}(\mathcal{S}(u-u^{\mrm{inc}}) + \partial_r u^{\mrm{inc}}) \overline{u}\,d\sigma\right)\quad\mbox{ and }\quad J_n=-\eta_n\,\vert b_n \vert^2  \left| \dsp \int_{-\pi}^{\pi} \eps\inv \Phi_n^2 \, d\theta_n\right|,
\end{equation}
where $\mathcal{S}$ is the Dirichlet-to-Neumann map on $\partial D_R$. To assess the term $J_n$, the energy trapped at the corner $\bfc_n$, we have to derive formulas to compute the coefficient $b_n$. There are two different approaches to do that. Let us present the two methods when the interface has only one corner $\bfc $. \\
\newline 
\noindent$\star$ By definition, $u$ admits the expansion $u = b s^\out + \tilde{u}$ where $b \in \mathbb{C}$ and $\tilde{u} \in H^1_{\mrm{loc}}(\R^2)$. This yields 
\[ 
\dsp \int_{\partial D_\rho} \eps\inv  u \,\overline{s^\out}\, d \sigma = b \,  \dsp \int_{\partial D_\rho} \eps\inv  s^\out\,\overline{s^\out}\, d \sigma + \dsp \int_{\partial D_\rho} \eps\inv  \tilde{u}\,\overline{s^\out}\, d \sigma.
\]
Proceeding as in \S \ref{proof_lemma_energy_DIC}, one finds that $|  \int_{\partial D_\rho} \eps\inv  \tilde{u} \,\overline{s^\out}\, d \sigma | \leq C \rho^{\beta}$ for some $\beta >0$. We deduce
\begin{equation}\label{b1}
 b =  \dsp\frac{\dsp\rho^{-\lambda^\out}\int_{-\pi}^{\pi} \eps\inv  u(\rho,\cdot) \Phi\, d\theta}{\dsp \int_{-\pi}^{\pi} \eps\inv \Phi^2\, d\theta} + O(\rho^{\beta}).
\end{equation}
In (\ref{b1}), $\lambda^\out$ denotes the singular exponent of $s^\out$ defined in Table \ref{tableSummary}. Note that according to Lemma \ref{ResultatCalcul}, we know that the denominator of the above equation does not vanish.\\
\newline
\noindent$\star$ Let us present another approach to assess the coefficient $b$. We follow a classical idea to compute stress intensity factors. First, introduce $\mathfrak{s}$ the solution to the following problem:
\begin{equation}\label{solu_b}
\begin{array}{|rcll}
\multicolumn{4}{|l}{\mbox{Find } \mathfrak{s}=\zeta s^\inc + c s^\out + \tilde{\mathfrak{s}},\mbox{ with }c\in\Cplx\mbox{ and }\tilde{\mathfrak{s}}\in H^1(D_R), \mbox{ such that:}}\\[3pt]
\div (\eps\inv \grad \mathfrak{s}) +k^2_0 \mu\,  \mathfrak{s} &=&  0  &\mbox{in } D_R,\\[3pt]
\partial_r \mathfrak{s} - \mathcal{S} \mathfrak{s} & = &0  & \mbox{on } \partial D_R.
\end{array}
\end{equation}  
In (\ref{solu_b}), $\zeta$ is a given cut-off function such that $\zeta=1$ in a neighbourhood of $\bfc$ and such that $ \div (\eps\inv \grad s^\inc)=0$ on the support of $\zeta$. Looking for a solution $\mathfrak{s}$ of \eqref{solu_b} is equivalent to look for $w=\mathfrak{s}- \zeta s^\inc $ solution of
\begin{equation}\label{solu_ws}
\begin{array}{|rcll}
 \div (\eps\inv \grad w) +k^2_0 \mu\,  w &=& f_1^\inc := -(\div (\eps\inv \grad(\zeta s^\inc)) +k^2_0 \mu\,  (\zeta s^\inc))& \mbox{in } D_R,\\[3pt]
 \partial_r w - \mathcal{S} w &=& f_2^\inc :=-(\partial_r(\zeta s^\inc )- \mathcal{S}(\zeta s^\inc )) & \mbox{on } \partial D_R.
\end{array}
\end{equation}  
Using Remark \ref{rmk:volumic_source}, one can easily prove that Problem \eqref{solu_ws} has a unique solution. Solving \eqref{solu_ws} consists in solving \eqref{eq:Sommerfeld_exacte} with a source that, instead of coming from the exterior domain, comes from the corner. Now, if $u = b s^\out + \tilde{u}$ with $b \in \mathbb{C}$, $\tilde{u} \in H^1_{\mrm{loc}}(\R^2)$ is a solution of \eqref{eq:startingpb}, by Green's formula we get
\begin{equation}\label{calculusCoef1}
\dsp \int_{ \partial D_R} \eps \inv \left( \partial_r u \,\mathfrak{s} - \partial_r \mathfrak{s}\,u \right)\, d\sigma -\int_{ \partial D_\rho} \eps \inv \left( \partial_r u \,\mathfrak{s} - \partial_r \mathfrak{s}\, u \right)\, d\sigma = 0.
\end{equation}
Proceeding again as in \S\ref{proof_lemma_energy_DIC}, one finds 
\begin{equation}\label{calculusCoef2}
\int_{ \partial D_\rho} \eps \inv \left( \partial_r u \,\mathfrak{s} - \partial_r \mathfrak{s} \,u \right)\, d\sigma  =2  b  \, \lambda^\out \dsp \int_{-\pi}^{\pi}\eps\inv\Phi^2\, d\theta + O(\rho^\beta)\qquad\mbox{ for some }\beta>0.
\end{equation}
On the other hand, we have $\int_{ \partial D_R} \eps \inv \left( \partial_r u\,\mathfrak{s} - \partial_r \mathfrak{s}\,u \right)\,d\sigma  = \int_{\partial D_R} \e\di^{-1}g^{\mrm{inc}} \mathfrak{s} \, d\sigma$
(use (\ref{eq:DtN}) and the properties of the Hankel functions to obtain this). Plug the latter identity and (\ref{calculusCoef2}) in (\ref{calculusCoef1}). Then take the limit as $\rho \rightarrow0$. We obtain 
\begin{equation}\label{eqnDefCoefSingu}
b = \dsp \frac{ \dsp \int_{\partial D_R} \e\di^{-1}g^{\mrm{inc}} \mathfrak{s} \, d\sigma}{2 \lambda^\out \dsp \int_{-\pi}^{\pi}\eps\inv \Phi^2 \, d\theta} .
\end{equation}
\noindent The advantage of the second approach is twofold. First, it does not require to compute the solution $u$. The function $\mathfrak{s}$ can be approximated once for all, independently from the source term. Moreover, numerically, the second method is more accurate than the first one.\\ 
\newline
Let us turn to numerical simulations. We compute the terms $J_\ext$, $J_n$ (see (\ref{eq: flux_cons_N})) for $\alpha^{\mrm{inc}}\in [0;2\pi)$. We perform two series of experiments: one with $\omega = 9 $ PHz ($\ke=-1.1838$), another one with $\omega = 11 $ PHz ($\kappa_\eps = -0.4619$). For the latter case, according to Table \ref{tableSummary} and \S\ref{ChoicePMLparam}, we have to change the PML coefficients. We take $\alpha_1 = e^{i 2\pi/25} $ (top corner) and $\alpha_2 = \alpha_3 = e^{i 2\pi/33}$ (bottom corners). All the other parameters are set as previously (see the beginning of \S\ref{sec:PMLs}). In Figure \ref{img:Energy_flux_triangle}, we work with $\omega = 9 $ PHz ($\ke=-1.1838$). We observe that the energy balance \eqref{eq: flux_cons_N} seems to be satisfied. There is a small mismatch between $J_\ext$ and $\sum_n J_n$. Probably, this is because we use the first method described above to assess the coefficients $b_n$ appearing in the definition of $J_n$. Remark that due to the symmetry of the geometry, the results are symmetric (the left and right corners play a similar role). One notice that for $\alpha^{\mrm{inc}}  = \pm \pi/2$ ($90$\textdegree~and $270 $\textdegree), there is no trapped energy at the top corner. This was expected. Indeed, for this setting, according to Table \ref{tableSummary}, we know that the black-hole singularities are skew-symmetric with respect to the bisector of $\bfc_n$. But for these two particular incidences, $u^{\mrm{inc}}$ is symmetric with respect to the top corner's bisector. As a consequence, there is no excitation of the outgoing singularity. The same phenomenon occurs for the other corners when $\alpha^{\mrm{inc}}$ corresponds to the direction of the bisector of $\bfc_n$. \\
In Figure \ref{img:Energy_sym_triangle}, we work with $\omega = 11 $ PHz ($\ke=-0.4619$). When $\ke>-1 $, according to Table \ref{tableSummary}, the black-hole waves are symmetric. This explains why this time, we observe that the energy trapped at the corner $\bfc_n$ ($n=1,2,3$) is maximum when $\alpha^{\mrm{inc}}$ coincides with the direction of the bisector of $\bfc_n$. 

\begin{figure}[!h]
\centering
\includegraphics[width=0.38\columnwidth]{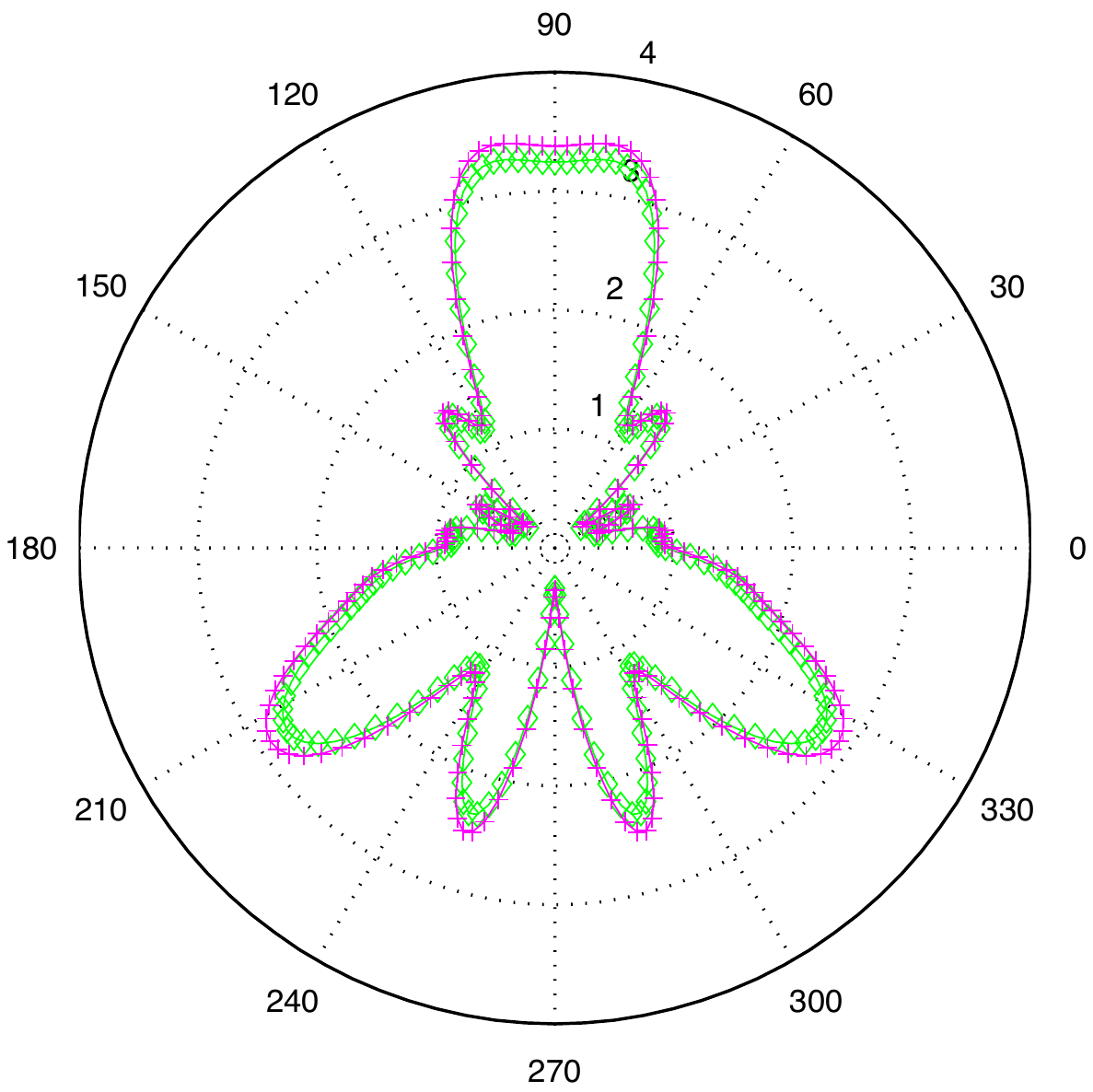}\qquad
\includegraphics[width=0.38\columnwidth]{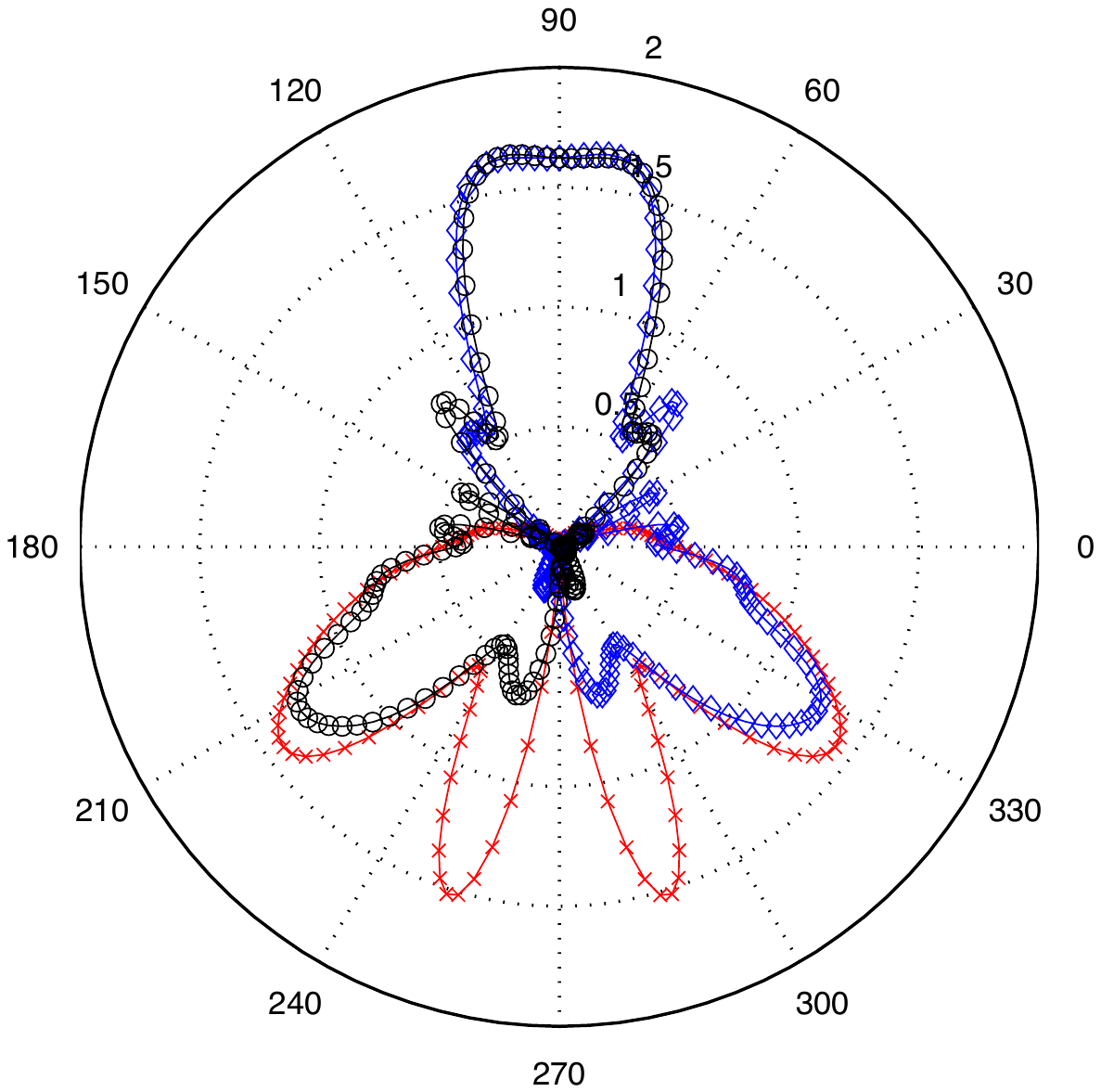}
\caption{Left: sum of the energy fluxes at the corners (diamond) and energy flux through $\partial D_R$ (cross) with respect to $\alpha^{\mrm{inc}}\in[0;2\pi)$ in polar coordinates. Right: energy flux at the top corner (cross), left corner (diamond), right corner (circle) in polar coordinates. The frequency is set to $\omega = 9$ PHz ($\ke=-1.1838$). \label{img:Energy_flux_triangle}}
\end{figure}
\begin{figure}[!h]
\centering
\includegraphics[width=0.38\columnwidth]{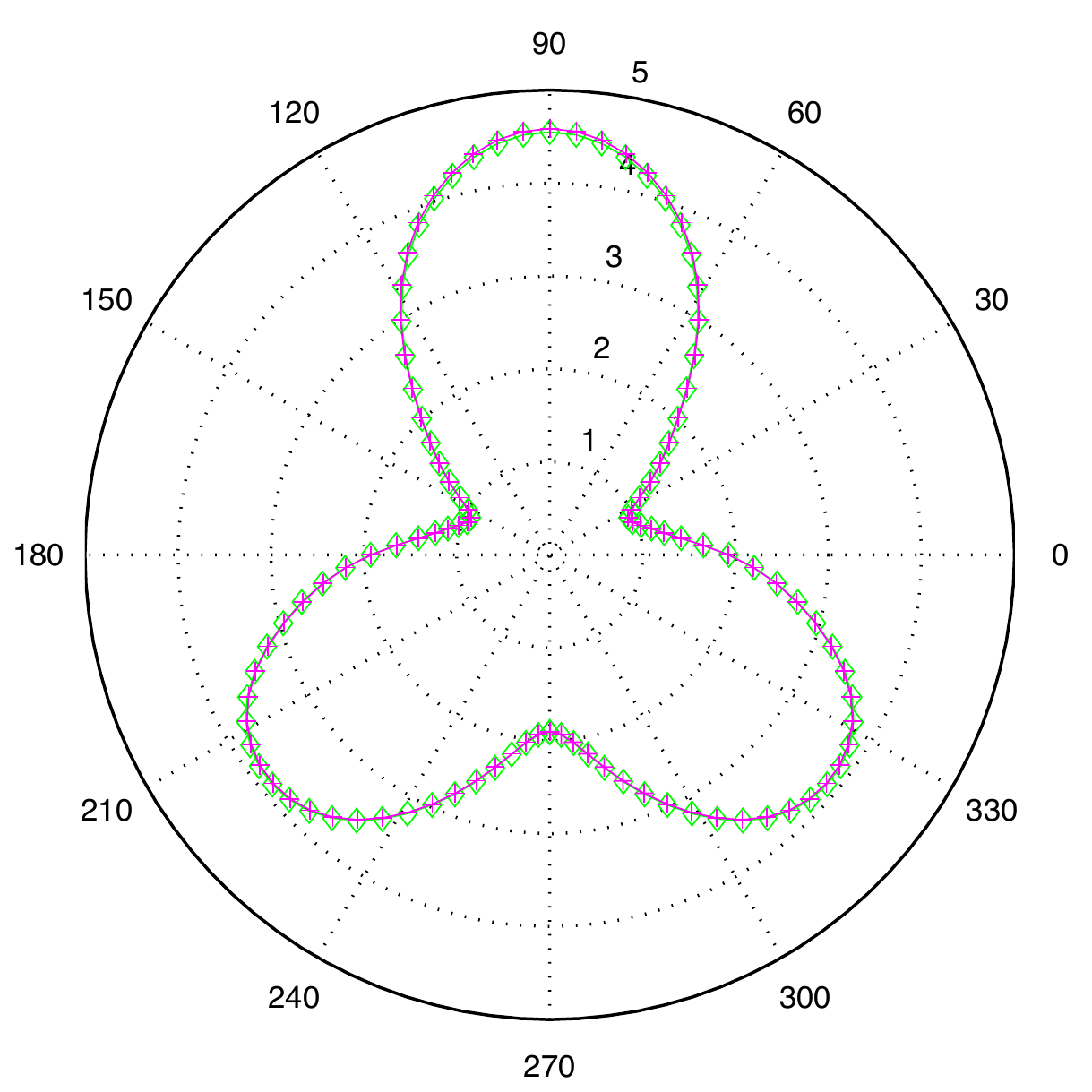}\qquad
 \includegraphics[width=0.38\columnwidth]{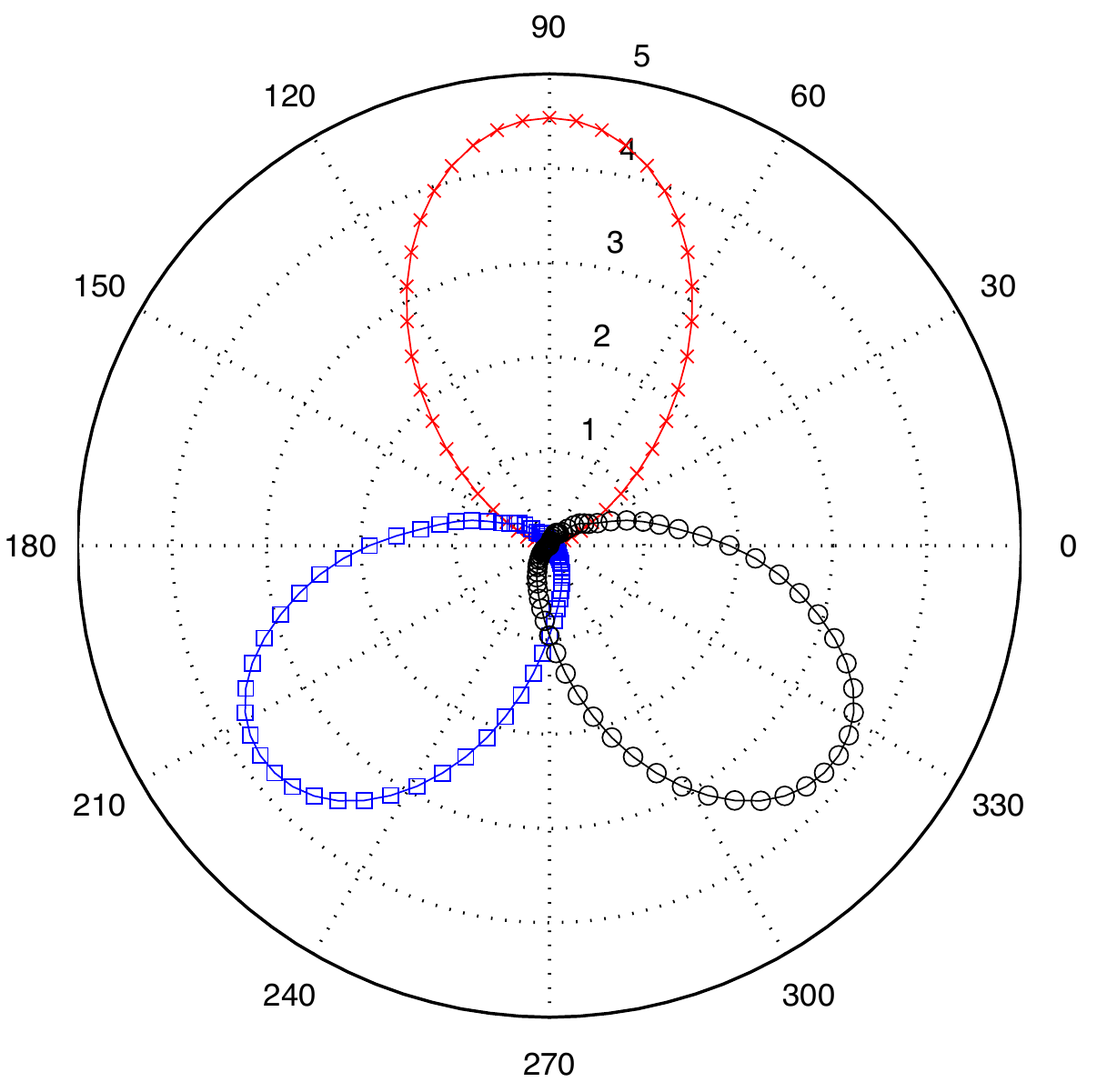}
\caption{Left: sum of the energy fluxes at the corners (diamond) and energy flux through $\partial D_R$ (cross) with respect to $\alpha^{\mrm{inc}}\in[0;2\pi)$ in polar coordinates. Right: energy flux at the top corner (cross), left corner (square), right corner (circle) in polar coordinates. The frequency is set to $\omega = 11 $ PHz ($\kappa_\eps = -0.4619$). \label{img:Energy_sym_triangle}}
\end{figure}

\section{Discussion and prospects}\label{sectionDiscussion}

Let us conclude this paper by making some comments regarding this new numerical method:\\[5pt]
$\bullet$ We point out that the method with PMLs at the corners is also interesting when the metal is slightly absorbing. In this case, the scattering problem is well-posed in the usual $H^1$ framework like when the contrast $\kappa_{\eps}$ lies outside the critical interval. This is due to the fact that there is no oscillating singularities. However, when the dissipation is small, the field can be very singular (according to Proposition \ref{propositionLimitingAbsorption}, we know that $\lim_{\gamma\to 0}s^{\gamma}=s^{\out}$ where $\gamma$ corresponds to the dissipation). As a consequence, it is necessary to use a very refined mesh to obtain a good approximation of the solution. Adding some PMLs at the corners allows to attenuate the singularities without producing spurious reflections. In Figure \ref{figs_dissip}, we use the lossy Drude's model (see (\ref{lossyDrudeModel})) at the frequency $\omega = 6 $ PHz for silver. It yields $\eps^{\gamma}_\me(\omega) = -3.9193 +0.0926 i $. We set the other parameters as previously (with PML coefficients as in the case $\omega = 9 $ PHz). We observe that when the mesh is refined, the numerical solution is much more stable with PMLs than without them. Note that instabilities with respect to dissipation have been already pointed out in \cite{HePhMi11}. 

\begin{figure}[!h]
\centering
\includegraphics[width=\columnwidth]{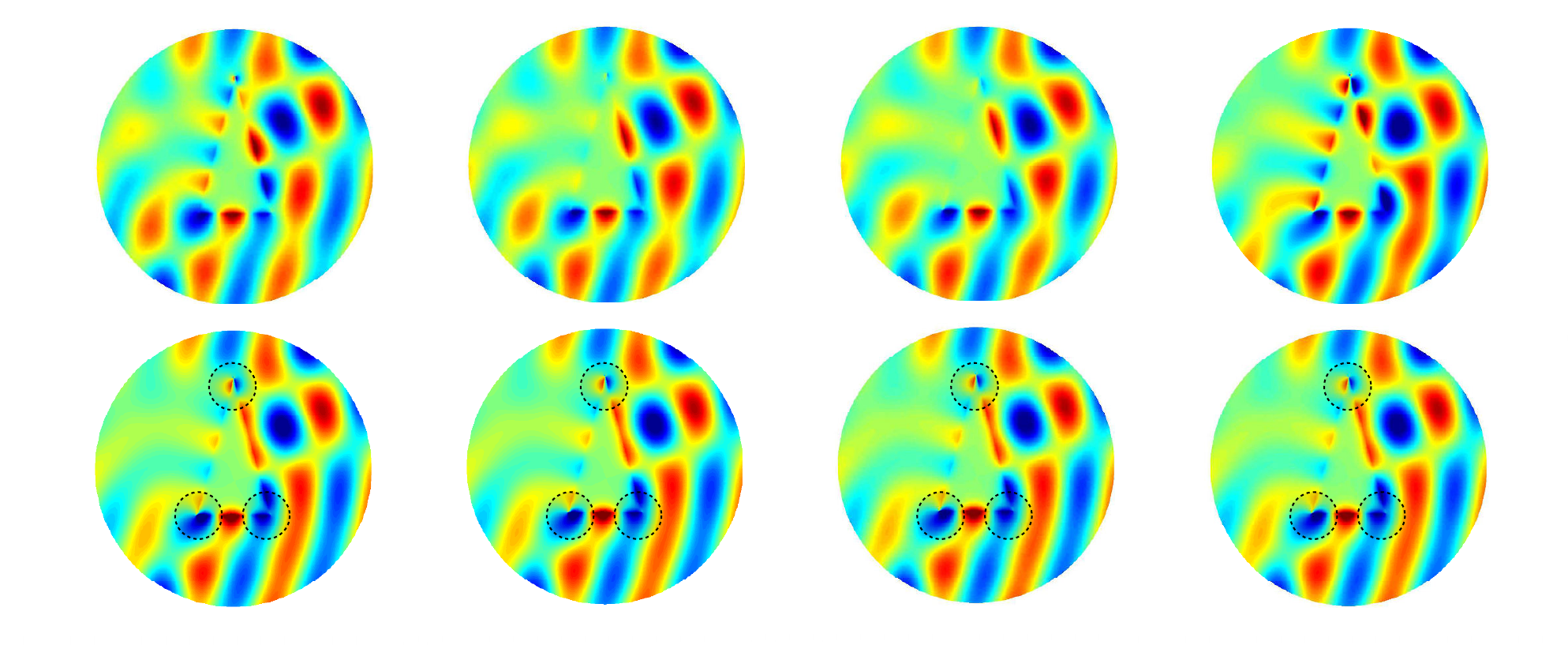}
\caption{Up (resp. bottom): solutions without (resp. with) PMLs. From left to right, around 19 400, 28 000, 42 300 and 76 000 degrees of freedom.\label{figs_dissip}}
\end{figure}

\noindent$\bullet$ In practice, the technique with PMLs turns out to be very efficient to obtain a good approximation of the plasmonic waves propagating at the interface. Concerning the justification of the method, several questions remain open. First, one needs to control the error made when truncating the PMLs. This is not straightforward because of the change of sign of $\eps$. However, using Kondratiev spaces and working for example like in \cite{Kal11}, one can reasonably hope to establish such a result. The problem of the justification of the convergence of the finite element methods seems more intricate. Without PMLs, the existing proofs (with sign-changing $\eps$) require assumptions on meshes (see \cite{ChCi13,BoCaCi14}) and the question of knowing whether or not these assumptions are necessary is not solved. Here, due to the complex scaling $z\mapsto z/\alpha $ of the PMLs, it is not even clear that the continuous problem admits a unique solution.\\ 

\noindent$\bullet$ Some efforts will be devoted to the singularity extraction techniques for problems with a sign-changing coefficient, namely the accurate computation of the coefficient $b$ in (\ref{eqnDefCoefSingu}), in the spirit of \cite{AsCS00,HaLo02,YoAS02}. 

\section{Annex}

\subsection{Proof of decomposition (\ref{eq:T_coerc})}\label{Annex-Tcoercivity} 
Let $\mathcal{B}$ denote the operator associated with the sesquilinear form $b(\cdot,\cdot)$ defined by (\ref{bilinear_b}). Assume that the contrast $\kappa_{\eps}=\e{\text{m}}/\e{\text{d}}$ satisfies $\kappa_{\eps}\notin I_c=[-b_{\Sigma};-1/b_{\Sigma}]$ where $b_\Sigma$ has been set in \eqref{defParamIntervalle}. We prove here the existence of a bounded operator $\tT:H^1(D_R)\to H^1(D_R)$ such that 
\[
\mathcal{B}\circ\tT = \mathcal{I}+\mathcal{K}\qquad\mbox{(decomposition (\ref{eq:T_coerc}))}
\]
where $\mathcal{I}:H^1(D_R)\to H^1(D_R)$ is an isomorphism and where $\mathcal{K}:H^1(D_R)\to H^1(D_R)$ is a compact operator.\\
\newline
Let us introduce $R_0$ such that $R_0<R$ and $\overline{\Omega_m}\subset D_{R_0}$. For a given $w\in H^1_0(D_{R_0})$, we consider the following variational problem:
\begin{equation}\label{eq:FV_dansDR0}
\begin{array}{|lcl}
\text{Find } u_w \in H^1_0({R_0}) \text{ such that: } \\[6pt]
\dsp\int_{D_{R_0}} \eps^{-1}\nabla u_w \cdot \overline{\nabla v}\,d\bfx + i\dsp \int_{D_{R_0}}u_w \overline{v} \,d\bfx=\e{\text{d}}^{-1}(w,v)_{H^1(D_{R_0})}, \qquad \forall v \in H^1_0({R_0}).
\end{array} 
\end{equation}
The uniqueness of the solution is obvious (taking $w=0$ and the imaginary part of the identity for $v=u_w$). 
Then, using the hypothesis on the contrast $\kappa_{\eps}$, we deduce from Theorem 4.3 and Remark 4.4 of 
\cite{BoChCi12} that Problem (\ref{eq:FV_dansDR0}) is well-posed. As a consequence, the map $\tT_0:H^1_0(D_{R_0})\to H^1_0(D_{R_0})$ such that $\tT_0w=u_w$, where $u_w$ is the solution to (\ref{eq:FV_dansDR0}), is an isomorphism.\\ 
Introduce a smooth cut-off function $\chi$ such that $\chi=1$ in a neighborhood of $\Omega_m$ and $\chi=0$ outside $D_{R_0}$. Note that for all $ u,\,v\in H^1(D_R)$, by definition of $\tT_0$, one has:
\begin{equation}
\int_{D_{R}} \eps^{-1}\nabla(\tT_0(\chi u))\cdot \overline{\nabla(\chi v) }\, d\bfx + i\dsp \int_{D_{R}}\tT_0(\chi u)\,\overline{\chi v}\,d\bfx=\e{\text{d}}^{-1}(\chi u,\chi v)_{H^1(D_{R})}.
\label{eq:T0}
\end{equation}
Now, we define the bounded operator $\tT:H^1(D_R)\to H^1(D_R)$ such that for all $u\in H^1(D_R)$,
\[
\tT u=\chi \tT_0(\chi u) + (1-\chi^2)u.
\]
A direct calculation yields 
\[
\begin{array}{lll}
\dsp b(\chi \tT_0 (\chi u),v)&\dsp = \e{\text{d}}^{-1}(\chi u,\chi v)_{H^1(D_{R})} + c_1(u,v),\\
\dsp b((1-\chi^2)u,v)&\dsp = \int_{D_{R}} \e{\text{d}}^{-1}  (1-\chi^2)\nabla u\cdot\overline{\nabla v}\,d\bfx + c_2(u,v) + \e\di\inv  \sum \limits_{n= -\infty}^{+\infty} \Frac{ n}{R}\, u_n\overline{v_n}
\end{array}
\]
with 
\[
\begin{array}{lll}
\dsp c_1(u,v)&\dsp =\int_{D_{R}} \eps^{-1}\nabla \chi \cdot(\tT_0(\chi u)\,\overline{\nabla v}-\nabla(\tT_0(\chi u))\,\overline{v})\,d\bfx- i\dsp \int_{D_{R}}\tT_0(\chi u)\,\overline{\chi v}\,d\bfx,\\[9pt]
\dsp c_2(u,v)&\dsp =\int_{D_{R}} \e{\text{d}}^{-1}\nabla(1-\chi^2)\cdot(u\overline{\nabla v})\,d\bfx.
\end{array}
\]
Finally, setting
\[
\begin{array}{lll}
\dsp c_0(u,v)&\dsp =\int_{D_{R}} \e{\text{d}}^{-1}
((\chi^2+|\nabla \chi|^2-1)u\,\overline{v}+\chi\nabla\chi\cdot(\nabla u \,\overline{v}+u\overline{\nabla v}))\,d\bfx,\\
\end{array}
\]
we get the following identity:
\[
b(\tT u, v)= \e{\text{d}}^{-1}(u, v)_{H^1(D_{R})}+ c_0(u,v)+c_1(u,v)+c_2(u,v) + \e\di\inv  \sum \limits_{n= -\infty}^{+\infty} \Frac{ n}{R}\, u_n\,\overline{v_n} .
\]
We deduce that $\mathcal{B}\circ\tT = \mathcal{I}+\mathcal{K}$ where $\mathcal{I}$, $\mathcal{K}:H^1(D_R)\to H^1(D_R)$ are the bounded operators such that, for all $u$, $v\in H^1(D_R)$, 
\[
\begin{array}{lcl}
(\mathcal{I}u,v)_{H^1(D_R)} &=& \e{\text{d}}^{-1}(u, v)_{H^1(D_{R})}+ \e\di\inv  \dsp\sum \limits_{n= -\infty}^{+\infty} \Frac{ n}{R}\, u_n\,\overline{v_n}\\[10pt]
(\mathcal{K}u,v)_{H^1(D_R)} &=&c_0(u,v)+c_1(u,v)+c_2(u,v).
\end{array}
\]
Now, using classical techniques (in particular, the Lax-Milgram theorem and the compact embedding of $H^1(D_R)$ in $L^2(D_R)$), one can show that $\mathcal{I}$ is an isomophism while $\mathcal{K}$ is a compact operator. Therefore, the proof of (\ref{eq:T_coerc}) is complete.

\subsection{Proof of Proposition \ref{prop:fonction_zeros}}\label{annex:compute}
We reproduce a calculus which can be found in \cite{DaTe97} or \cite[Proposition 3.2.8]{Chesnel12}.  Classically, one can show that $\Phi$ is an eigenfunction associated with the eigenvalue $\lambda\in\Ls $ (resp. $\lambda\in\Lss $) for (\ref{pb_phi}) if and only if it verifies the transmission problem:
\[
\begin{array}{|rcll}
(\lambda^2+\partial^2_{\theta})\Phi & = &   \multicolumn{2}{l}{ 0\qquad\mbox{ on }  (0;\phi/2)} \\[4pt]
(\lambda^2+\partial^2_{\theta})\Phi & = &   \multicolumn{2}{l}{ 0\qquad\mbox{ on }  (\phi/2;\pi)} \\[4pt]
\Phi({\phi^-/2})&=& \multicolumn{2}{l}{\Phi({\phi^+/2}) , \qquad   \eps_{\mrm{m}}^{-1}\partial_{\theta} \Phi({\phi^- /2}) = \eps_{\mrm{d}}^{-1}\partial_{\theta} \Phi({\phi^+ /2})} \\[4pt]
\partial_{\theta} \Phi(0) &=& \partial_{\theta} \Phi(\pi) = 0 &  (\mbox{resp. }  \Phi(0) =  \Phi(\pi) = 0)\\[4pt]
\Phi(\theta)&=& \Phi(-\theta)  & (\mbox{resp. } \Phi(\theta)= -\Phi(-\theta))  \quad\mbox{ on }  (-\pi;0). \\[4pt]
\end{array}
\]
Looking for solutions under the form
\[
\begin{array}{llcl}
 & \Phi(\theta)=A\,\cos (\lambda\theta)\ \mbox{ on } (0;\phi/2) \quad &\mbox{ and } &\quad  \Phi(\theta)=B\,\cos (\lambda(\theta - \pi))\  \mbox{ on } (\phi/2;\pi),\\[4pt]
\mbox{\text{(\ resp.} } & \Phi(\theta)=C\,\sin (\lambda\theta)\ \mbox{ on } (0;\phi/2) \quad & \mbox{ and } & \quad  \Phi(\theta)=D\,\sin (\lambda(\theta - \pi))\ \mbox{ on } (\phi/2;\pi)\ ),
\end{array}
\]
where $(A,B)\ne(0,0)$, $(C,D)\ne (0,0)$ are some constants, we obtain that $\lambda$ belongs to $\Lambda\setminus\{0\}$ if and only if it satisfies 
\[
\kappa_\eps^{-1} \tan(\lambda\phi/2) = \tan (\lambda(\phi/2-\pi)) \qquad (\mbox{resp. }\ \kappa_\eps \tan(\lambda\phi/2) = \tan (\lambda(\phi/2-\pi))).
\]

\subsection{Proof of Lemma \ref{ResultatCalcul}}\label{ssec:calculus}

\noindent The next lemma is a technical result needed in the selection of the outgoing solution (see \S\ref{sec_outgoing}).
\begin{lemmaAnnexe}\label{ResultatCalcul}
Assume that $\kappa_{\eps}\in (-b_{\Sigma};-1/b_{\Sigma})\setminus \{-1\}$. Let $(\pm i\eta, \Phi)$ be a solution of (\ref{pb_phi}), with $\Phi $ equal to (\ref{PhiSkewSym}) or (\ref{PhiSym}) according to the situation. Then we have
\[
\int_{-\pi}^{\pi}\eps^{-1} \Phi ^2\,d\theta>0\quad\mbox{ if}\ \kappa_\eps \in (-b_\Sigma; -1 )\quad \mbox{ and }\quad \int_{-\pi}^{\pi}\eps^{-1} \Phi ^2\,d\theta<0\quad\mbox{ if}\ \kappa_\eps \in (-1;-1/b_\Sigma).
\]
\end{lemmaAnnexe}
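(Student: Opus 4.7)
The plan is to reduce the sign of the integral to a monotonicity statement about the elementary function $x\mapsto 4x/\sinh(2x)$, which is strictly decreasing on $(0,\infty)$ (as follows from the series expansion $\sinh(2x)/(2x)=\sum_{n\geq0}(2x)^{2n}/(2n+1)!$ being strictly increasing).

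First I would exploit the fact that $\Phi^2$ is an even function of $\theta$ (regardless of whether $\Phi$ itself is symmetric or skew-symmetric), so that, factoring out the positive constant $2/\eps_\di$,
\[
\int_{-\pi}^{\pi}\eps^{-1}\Phi^2\,d\theta \;\mbox{ has the sign of }\; \kappa_\eps^{-1} I_m + I_d,\qquad I_m:=\int_0^{\phi/2}\Phi^2\,d\theta,\ \ I_d:=\int_{\phi/2}^{\pi}\Phi^2\,d\theta.
\]
Introducing $t:=\eta\phi/2$ and using the identity $\pi-\phi/2=bt/\eta$ with $b:=(2\pi-\phi)/\phi$, a direct computation gives, in the skew-symmetric case (formula \eqref{PhiSkewSym}),
\[
I_m=\frac{g(t)}{4\eta},\quad I_d=\frac{g(bt)}{4\eta},\qquad g(x):=\frac{\sinh(2x)-2x}{\sinh^2(x)},
\]
whereas in the symmetric case (formula \eqref{PhiSym}),
\[
I_m=\frac{\tilde g(t)}{4\eta},\quad I_d=\frac{\tilde g(bt)}{4\eta},\qquad \tilde g(x):=\frac{\sinh(2x)+2x}{\cosh^2(x)}.
\]

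The next step is to use the dispersion relation from Proposition \ref{prop:fonction_zeros}: the skew-symmetric singularity satisfies $\tanh(bt)=-\kappa_\eps\tanh(t)$, while the symmetric one satisfies $\tanh(bt)=-\kappa_\eps^{-1}\tanh(t)$. Substituting these into $\kappa_\eps^{-1}I_m+I_d$ and simplifying yields, up to the positive factor $1/(4\eta\tanh(bt))$ in the first case and $\tanh(bt)/(4\eta\tanh(t))$ in the second,
\[
\mathrm{sign}(\kappa_\eps^{-1}I_m+I_d)=\begin{cases}\mathrm{sign}\bigl(h(bt)-h(t)\bigr),& h(x):=\tanh(x)g(x)=2-\dfrac{4x}{\sinh(2x)},\\[6pt] \mathrm{sign}\bigl(F(bt)-F(t)\bigr),& F(x):=\tilde g(x)/\tanh(x)=2+\dfrac{4x}{\sinh(2x)},\end{cases}
\]
in the skew-symmetric and symmetric cases respectively. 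Thus $h$ is strictly increasing and $F$ is strictly decreasing on $(0,\infty)$.

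It remains to combine this with the Lemma \ref{lem:exist_ilambda} classification to discuss the four configurations. When $0<\phi<\pi$ one has $b=b_\Sigma>1$, so $bt>t$: the skew-symmetric case occurs for $\kappa_\eps\in(-b_\Sigma;-1)$ and gives $h(bt)>h(t)$, hence a positive integral; the symmetric case occurs for $\kappa_\eps\in(-1;-1/b_\Sigma)$ and gives $F(bt)<F(t)$, hence a negative integral. When $\pi<\phi<2\pi$ one has $b=1/b_\Sigma<1$, so $bt<t$, and the roles of the two subintervals of $I_c$ are exchanged (symmetric/skew), with the inequalities also reversed by monotonicity, producing once again the positive sign on $(-b_\Sigma;-1)$ and the negative sign on $(-1;-1/b_\Sigma)$. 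The only technical nuisance is this case bookkeeping; the analytic content is concentrated entirely in the monotonicity of $x\mapsto 4x/\sinh(2x)$.
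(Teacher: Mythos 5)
Your proof is correct, but it takes a genuinely different route from the paper's. The paper never uses the dispersion relation: writing $\aleph_\mrm{m}=\int_{|\theta|<\phi/2}\Phi^2\,d\theta$ and $\aleph_\mrm{d}=\int_{\phi/2<|\theta|<\pi}\Phi^2\,d\theta$, it exploits the crude bound $\aleph_\mrm{m}+\kappa_\eps\aleph_\mrm{d}<\aleph_\mrm{m}-\aleph_\mrm{d}$ (valid for any $\kappa_\eps<-1$) to reduce the claim to the purely geometric inequality $\aleph_\mrm{m}<\aleph_\mrm{d}$, which it then deduces from the monotonicity of $t\mapsto(\sinh t\mp t)/(\cosh t\mp1)$, proved by differentiating twice. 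You instead substitute the exact relation $\tanh(bt)=-\kappa_\eps^{\pm1}\tanh(t)$ from Proposition \ref{prop:fonction_zeros} to eliminate $\kappa_\eps$ altogether, arriving at the identity $\kappa_\eps^{-1}I_m+I_d=\bigl(4\eta\tanh(bt)\bigr)^{-1}\bigl(h(bt)-h(t)\bigr)$ and its symmetric analogue, so that the whole lemma rests on the single elementary fact that $x\mapsto 4x/\sinh(2x)$ is decreasing. The paper's argument buys the slightly stronger, $\kappa_\eps$-independent statement $\aleph_\mrm{m}<\aleph_\mrm{d}$, with no need for the eigenvalue relation; yours buys an exact closed formula for $\int_{-\pi}^{\pi}\eps^{-1}\Phi^2\,d\theta$ (potentially useful for the quantitative evaluation of the corner fluxes $J_n$) and a monotonicity input checked from a power series rather than two rounds of differentiation. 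The only slip I see is immaterial: in the symmetric case the positive prefactor is $\tanh(bt)/(4\eta)$ rather than $\tanh(bt)/(4\eta\tanh t)$; both are positive, so the sign conclusion is unaffected.
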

\begin{proof}
Set \quad\quad$\aleph_\mrm{m}:=\dsp\int_{|\theta|<\phi/2} \Phi ^2 d\theta$
\quad\quad\textrm{and}\quad\quad$
\aleph_\mrm{d}:=\dsp\int_{\phi/2 < |\theta|<\pi} \Phi ^2 d\theta$.\\\newline
$\star$ Pick some $\ke\in(-b_{\Sigma};-1)$. We want to show that $\eps_{m}^{-1}\aleph_\mrm{m} + \eps_{d}^{-1}\aleph_\mrm{d}>0$. Since 
$\eps_{m}^{-1}\aleph_\mrm{m} + \eps_{d}^{-1}\aleph_\mrm{d} = \eps_{m}^{-1}( \aleph_\mrm{m} + \kappa_{\eps}\aleph_\mrm{d} )>\eps_{m}^{-1}( \aleph_\mrm{m} - \aleph_\mrm{d})$, it is enough to prove that $\aleph_\mrm{m} - \aleph_\mrm{d}<0$.\\[6pt]
- First, assume that $0<\phi<\pi$.  Explicit calculus using the expression of $\Phi$ given by (\ref{PhiSkewSym}) yields
\[
\aleph_\mrm{m}=\cfrac{\sinh(\eta\phi)-(\eta\phi)}{\eta(\cosh(\eta\phi)-1)}
\quad\mbox{ and }\quad
\aleph_\mrm{d}=\cfrac{\sinh(\eta(2\pi-\phi))-(\eta(2\pi-\phi))}{\eta(\cosh(\eta(2\pi-\phi))-1)} \ .
\]
Define $\dsp h(t):=(\sinh t-t)/(\cosh t-1)$. We have $\eta(\aleph_\mrm{m} - \aleph_\mrm{d})=h(\eta\phi)-h(\eta(2\pi-\phi))$, 
so it is sufficient to show that $h$ is an increasing function on $(0;+\infty)$. 
One computes $h'(t)=(2-2\cosh t + t\sinh t)/(\cosh t-1)^2$. Define $g(t)=2-2\cosh t + t\sinh t$. 
One finds $g'(t)=-\sinh t + t\cosh t$ and $g''(t)=t\sinh t$. One deduces, 
successively, $g'>0$ and $h'>0$. Thus $h$ is indeed an increasing function on $(0;+\infty)$.\\[4pt] 
- When $\pi<\phi<2\pi$, using the expression (\ref{PhiSym}), one finds 
\[
\aleph_\mrm{m}=\cfrac{\sinh(\eta\phi)+(\eta\phi)}{\eta(\cosh(\eta\phi)+1)}
\quad\mbox{ and }\quad
\aleph_\mrm{d}=\cfrac{\sinh(\eta(2\pi-\phi))+(\eta(2\pi-\phi))}{\eta(\cosh(\eta(2\pi-\phi))+1)} \ .
\]
Introduce $\dsp \hat{h}(t):=(\sinh t+t)/(\cosh t+1)$. We have $\eta(\aleph_\mrm{m} - \aleph_\mrm{d})=\hat{h}(\eta\phi)-\hat{h}(\eta(2\pi-\phi))$, 
so it is sufficient to prove that $\hat{h}$ is a decreasing function on $(0;+\infty)$. 
One computes $\hat{h}'(t)=(2+2\cosh t - t\sinh t)/(\cosh t+1)^2$. Define $\hat{g}(t)=2+2\cosh t - t\sinh t$. 
One finds $\hat{g}'(t)=\sinh t - t\cosh t$ and $\hat{g}''(t)=-t\sinh t$. One deduces, 
successively, $\hat{g}'<0$ and $\hat{h}'<0$. Thus $\hat{h}$ is indeed a decreasing function on $(0;+\infty)$.\\[6pt]
$\star$ The same approach, \textit{mutatis mutandis}, shows that $
\dsp\int_{-\pi}^{\pi}\eps^{-1} \Phi ^2\,d\theta<0$ when $\ke\in(-1;-1/b_{\Sigma})$.
\end{proof}

\subsection{Details of the proof of Lemma \ref{cor:uniqueness_Kondra}}\label{proof_lemma_energy_DIC}

Let $u=b\,s^{\out} +\tilde{u}$, with $b \in \mathbb{C},\ \tilde{u} \in H^1_{\mrm{loc}}(\R^2)$, be a solution of \eqref{eq:Sommerfeld_exacte}. Lemma \ref{ResultatCalcul} and \eqref{CalculPlusMoins} yield
\begin{equation*}
\dsp \Im m \Big( \int_{\partial D_\rho} \eps\inv \partial_r u \,\overline{u} \, d \sigma \Big) = -\vert b\vert^2 \eta \dsp  \Big|   \int_{-\pi}^{\pi} \eps \inv \Phi^2 \, d\theta \Big| + \Im m \Big(\int_{\partial D_\rho} \eps\inv ( b \, \partial_r s^\out \,\overline{\tilde{u}} + \overline{b} \, \partial_r \tilde{u} \,\overline{s^\out} + \partial_r \tilde{u} \,\overline{\tilde{u}}) \, d \sigma  \Big).
\end{equation*}
To obtain the result of Lemma \ref{cor:uniqueness_Kondra}, we need to show that the second term of the right-hand side of the above equation tends to zero as $\rho\to0$. To proceed, let us establish for example that 
\begin{equation}\label{desiredEstim1} 
\lim \limits_{\rho \rightarrow 0} \dsp \int_{\partial D_\rho} \eps\inv  \, \partial_r s^\out \,\overline{\tilde{u}} \, d \sigma = 0,
\end{equation} 
the other terms being handled in the same way. Using Green's formula, we would like to write
\begin{equation}\label{desiredEstim2} 
\dsp \int_{\partial D_\rho} \eps\inv  \, \partial_r s^\out \overline{\tilde{u}} \, d \sigma = \dsp \int_{D_\rho} \eps\inv  \, \grad s^\out \cdot \overline{\grad \tilde{u}} \, d \bfx.
\end{equation}
The difficulty here is that $s^\out \not \in H^1(D_\rho)$. But we can prove \cite{Kond67,BoChCl13} that $\tilde{u}$ has more regularity than $H^1$ regularity. More precisely, the behaviour of $\tilde{u} $ at the corner is driven by the less regular singularity associated with singular exponents $\lambda$ such that $\Re e\,\lambda>0 $. Set
\[ \beta_0 := \min \lbrace \Re e\,\lambda | \, \lambda \in \Lambda \mbox{ and } \Re e\,\lambda >0 \rbrace.\]
We can show that $r^{-\beta} \grad \tilde{u} \in L^2(D_R)$ for $\beta < \beta_0 $ (see \cite{BoChCl13}), which implies, for all $0<\beta < \beta_0 $,
\begin{equation}\label{desiredEstim3} 
 \Big| \dsp \int_{D_\rho} \eps\inv  \, \grad s^\out \cdot \overline{\grad \tilde{u}} \, d \bfx \Big| \leq C\,\Big( \dsp \int_{D_\rho} r^{2\beta} \vert \grad s^\out \vert^2 \, d\bfx \Big)^{1/2}\Big( \dsp \int_{D_\rho} r^{-2\beta} \vert \grad \tilde{u}\vert^2 \, d\bfx \Big)^{1/2}.
 \end{equation} 
Since there holds, 
\[
\dsp \int_{D_\rho} r^{2\beta} \vert \grad s^\out \vert^2 \, d\bfx  \leq \rho^\beta \dsp \int_{D_\rho} r^{\beta} \vert \grad s^\out \vert^2 \, d\bfx\  \underset{\rho\to0}{\longrightarrow}\ 0,
\]
combining (\ref{desiredEstim2})--(\ref{desiredEstim3}) leads to (\ref{desiredEstim1}).

\section*{Acknowledgments}

The research of the authors is supported by the ANR project METAMATH, grant ANR-11-MONU-016 of the French Agence Nationale de la Recherche.
The research of C. C. is supported by the DGA, Direction G\'en\'erale de l'Armement. The research of L. C. is supported by the FMJH through the grant ANR-10-CAMP-0151-02 in the ``Programme des Investissements
d'Avenir''. 

\bibliography{Bibliography}
\bibliographystyle{plain}

\end{document}